\newtheorem{theorem}{Theorem}[section]
\newtheorem{proposition}{Proposition}[section]
\newtheorem{notation}{Notation}[section]
\newtheorem{lemma}{Lemma}[section]
\newtheorem{definition}{Definition}[section]
\newtheorem{remark}{Remark}[section]
\newenvironment{proof}[1][Proof]{\noindent\textbf{#1.} }{\ \rule{0.5em}{0.5em}}
\begin{document}

\title{Risk-sensitive Necessary and Sufficient Optimality Conditions and Financial
Applications: \\Fully Coupled Forward-Backward Stochastic Differential Equations with Jump
diffusion }
\author{\textbf{Rania KHALLOUT}$^{a\dagger}$\textbf{\ and Adel CHALA}$^{a\ddagger}%
$\textbf{\ }\\a) Laboratory of Applied Mathematics.\\Mohamed Khider University. \\P.O. Box 145, Biskra 07000. Algeria\\E-mail: $\ddagger$adel.chala@univ-biskra.dz.\\E-mail: $\dagger$rania\_khallout@yahoo.com.}
\maketitle

\begin{abstract}
Throughout this paper, we focused our aim on the problem of optimal control
under a risk-sensitive performance functional, where the system is given by a
fully coupled forward-backward stochastic differential equation with jump. The
risk neutral control system has been used as preliminary step, where the
admissible controls are convex, and the optimal solution exists. The necessary
as well as sufficient optimality conditions for risk-sensitive performance are
proved. At the end of this work, we illustrate our main result by giving an
example of mean-variance for risk sensitive control problem applied in cash
flow market.

\textbf{Key words: }Fully Coupled Forward Backward Stochastic Differential
Equation with Jump, Risk-sensitive, Necessary Optimality Conditions,
Sufficient Optimality Conditions, Logarithmic Transformation, Mean variance,
Cash flow.

\end{abstract}

\section{Introduction}

Maximum principle for controlled stochastic differential equations (SDE in
short), whose objective is to obtain necessary as well as sufficient
optimality conditions of controls, has been extensively investigated since
1970s. The initial work was done by Kushner \cite{Kushner}. The other
fundamental advance was developed by Haussmann \cite{Hausmann1,Hausmann2}.
Versions of the stochastic maximum principle ( SMP in short), in which the
diffusion coefficient is allowed to depend explicitly on the control variable,
have been derived by Arkin \& Saksonov \cite{Arkin}, Bensoussan
\cite{Bensoussan}, and Bismut \cite{Bismut1,Bismut2,Bismut3}. The results of
\cite{Arkin} and \cite{Bismut1,Bismut2,Bismut3} consider the case of random
coefficients. Necessary and sufficient optimality conditions for linear
systems with random coefficients, where no $L^{p}$-bounds are imposed on the
controls, are established by Cadellinas and Karatzas \cite{Cadenillas1}. The
general case, where the control domain is not convex, and the diffusion
coefficient depends explicitly on the variable control, was derived by Peng
\cite{Peng} by introducing two adjoint processes, and a variational inequality
of the second order. Recently, by considering risk sensitive performance
control with an exponential functional cost, Djehiche et al \cite{BTT}
generalized the previous results on the subject, and derive necessary
optimality conditions, by adding the mean field process.

The initial works on optimal control of jump processes was first considered by
Boel \cite{Boel, Boel varaiya}, Rishel \cite{Rishel}. Later, many authors
studied this kind of control problems including Situ \cite{Situ}, Cadellinas
\cite{Cadenillas2}, and Framstad \O ksendal \& Sulem \cite{FraOksendalSul}. We
note that in \cite{Cadenillas2} and \cite{FraOksendalSul}, some applications
in finance are treated. The general case, where the control domain is not
convex and the diffusion coefficient depends explicitly on the control
variable, was derived by Tang and Li \cite{TangLi}, by using the second order
expansion, the results of \cite{TangLi} are given with two adjoint processes
and a variational inequality of the second order. For more details on the
controlled systems with jumps and their applications, see \O ksendal and Sulem
\cite{Oksendal Sul2} and the references therein.

The purpose of this paper is to generalize the model governed with SDE and
BSDE, before that we must give this motivation example which has taken from
the thesis of Armerin \cite{Arm}.

Modeling and controlling cash flow processes of a firm or a project, such as
pricing and managing an insurance contract, is a class of problems where
forward backward stochastic differential equations (FBSDEs in short) provide a
natural setup and a powerful tool. In this paper, we shall investigate an
example of such a situation arising in the pricing of a simple insurance contract.

A policyholder at an insurance company has paid premiums that at time zero
have accumulated to the sum $m_{0}$. The money is invested in an asset
portfolio with wealth $\left(  x_{t}\right)  _{t\in\left[  0,T\right]  }$
managed by the insurance company under a time interval $\left[  0,T\right]  $.
At each instant $t\in\left[  0,T\right]  $, the policyholder ought to receive
an amount $c_{t}x_{t}$. The present value (price) of the cash stream $\left(
c_{s}x_{s}\right)  $, discounted to time $t$ with a discount factor (deflator)
$\exp\left\{  -%
{\displaystyle\int_{0}^{t}}
\lambda_{s}ds\right\}  $, where $\lambda_{t}$ is assumed nonnegative, bounded,
and deterministic, is given by%
\begin{equation}
y_{t}=\mathbb{E}\left[
{\displaystyle\int_{t}^{T}}
e^{-%
{\displaystyle\int_{0}^{s}}
\lambda_{r}dr}c_{s}\lambda_{s}ds\text{ }\left\vert \text{ }\mathcal{F}%
_{t}\right.  \right]  . \label{y conditional}%
\end{equation}

Assume that the portfolio is invested in a simple Black-Scholes market model
consisting of a risk-free asset (for example, a bond or a bank account) with a
short interest rate $r_{t}$ assumed bounded and deterministic, and a risky
asset evolving as a geometric Brownian motion with rate of return $\mu_{t}$
and volatility $\sigma_{t}$, both assumed to be bounded and deterministic
functions of time, with $\sigma_{t}\geq\varepsilon>0$ for all $t\in\left[
0,T\right]  $. In this market the wealth process $\left(  x_{t}\right)
_{t\in\left[  0,T\right]  }$ is governed by the dynamics given by%
\begin{equation}
\left\{
\begin{array}
[c]{l}%
dx_{t}=\left(  r_{t}x_{t}+\rho_{t}u_{t}\right)  dt+\sigma_{t}u_{t}dW_{t},\\
x_{0}=m_{0},
\end{array}
\right.  \label{x forward}%
\end{equation}

where $u_{t}$ is the amount invested in the risky asset and $\rho_{t}=\mu
_{t}-r_{t}$ is the risk premium held for this investment.

The insurance company allocates the amounts $\left(  u_{t}\right)  $ in order
to come close to the following target at time $T$: Find the admissible
strategies $\left(  c,u\right)  $ which maximize the policyholder's
preferences represented by the utility function $F$ of the cash streams, under
the condition that the total amount to be paid out is equal to the total
premium $m_{0}$:%
\begin{equation}
\max_{\left(  c,u\right)  }\frac{1}{\theta}\mathbb{E}\left[  F^{\theta}\left(
x_{t}\right)  \right]  . \label{cost1}%
\end{equation}

By selecting an appropriate portfolio choice strategy $u\left(  .\right)  ,$
where the exponent $\theta>0$ is called the risk sensitive parameter. Assume
that the policyholder's utility function is of HARA (hyperbolic absolute risk
aversion) type. That is, $F\left(  X\right)  =\frac{X^{\theta}}{\theta}$,
where $\theta\in\left(  0,1\right)  $. We can rewrite the expectation $\left(
\ref{cost1}\right)  $ $\mathbb{E}\left[  F^{\theta}\left(  x_{t}\right)
\right]  $ in terms of an expected exponential of integral criterion, by
applying It\^{o}'s formula to $\ln x_{t}^{\theta}=\theta\ln x_{t},$ we get%
\[
\max_{\left(  c,u\right)  }m_{0}^{\theta}\mathbb{E}\left[
{\displaystyle\int_{0}^{T}}
\exp\theta\left\{  f\left(  t,x_{t},u_{t}\right)  dt\right\}  \right]  ,
\]

where \
\[
f\left(  t,x_{t},u_{t}\right)  =\frac{\left(  \theta-1\right)  \sigma^{2}}%
{2}u_{t}^{2}+\left(  \frac{1}{2}\sigma^{2}+m-r_{t}-c_{t}x_{t}\right)
u_{t}+r_{t},
\]

and%
\[
y_{0}=\mathbb{E}\left[
{\displaystyle\int_{0}^{T}}
e^{-%
{\displaystyle\int_{0}^{s}}
\lambda_{r}dr}c_{s}\lambda_{s}ds\text{ }\left\vert \text{ }\mathcal{F}%
_{0}\right.  \right]  ,
\]

is the total value of the stream of cash flows discounted to time zero.

We need the following definition of admissible strategies suitable for our problem.

\begin{definition}
An admissible strategy is a pair of $\left(  \mathcal{F}_{t}\right)  _{t\geq
0}$-adapted processes $\left(  c,u\right)  $ such that $\left(
\ref{x forward}\right)  $ has a strong solution $\left(  x_{t}\right)
_{t\in\left[  0,T\right]  }$ that satisfies
\[
\mathbb{E}\int_{0}^{T}\left\vert x_{t}\right\vert dt<\infty,
\]
and
\[
\mathbb{E}\left[  \int_{0}^{T}e^{-%
{\displaystyle\int_{0}^{t}}
\lambda_{s}ds}c_{t}\lambda_{t}dt\right]  ^{2}<\infty.
\]

\end{definition}

Now, for each admissible strategy $\left(  c,u\right)  $, the $\left(
\mathcal{F}_{t}\right)  _{t\geq0}$-adapted value process $\left(
y_{t}\right)  _{t\geq0}$ in $\left(  \ref{y conditional}\right)  $ satisfies
the following BSDE:%
\begin{equation}
\left\{
\begin{array}
[c]{l}%
dy_{t}=\left(  \lambda_{t}y_{t}-c_{t}x_{t}\right)  dt+z_{t}dW_{t},\\
y_{T}=0,
\end{array}
\right.  \label{y backward}%
\end{equation}

where $\left(  z_{t}\right)  _{t\geq0}$ is $\left(  \mathcal{F}_{t}\right)
_{t\geq0}$-adapted and square-integrable with respect to $dt\times
d\mathbb{P}$ over $\left[  0,T\right]  \times\Omega.$

Hence, $\left(  \ref{x forward}\right)  $ and $\left(  \ref{y backward}%
\right)  $ satisfied by $\left(  x,y,z\right)  $ is a FBSDE, in the next step
we want to improve this notion of cash flow problem into a system of fully
coupled FBSDE\ with jump diffusion, as the best of our acknowledge, this is
not a simple or trivial extension, because of we have a lot of work to do.
Firstly the function minimize has the form an expected exponential, secondly
the problem of control governed by a fully coupled FBSDE with jump diffusion
as in system $\left(  \ref{problem 1}\right)  $ is very hard to solve it
especially if we want to derive the stochastic maximum principle (Lemma
$\ref{Lemma p2 p3},$ $\ref{NOC},$ and $\ref{SOC}$ below$)$.

Our aim in this paper is to derive necessary as well as sufficient optimality
conditions for jump process, controlled diffusion and generator for the system
driven by a fully coupled forward backward stochastic differential equation
(FBSDE in short) under a risk sensitive performance. We give the results, in
the form of global SMP, by using an auxiliary process as a preliminary step
see the section 3 below.

In the risk sensitive performance case, the system is governed by a FBSDE with
jump diffusion%
\[
\left\{
\begin{array}
[c]{ll}%
dx^{v}\left(  t\right)  = & b\left(  t,x^{v}\left(  t\right)  ,y^{v}\left(
t\right)  ,z^{v}\left(  t\right)  ,r^{v}\left(  t,.\right)  ,v\left(
t\right)  \right)  dt\\
& +\sigma\left(  t,x^{v}\left(  t\right)  ,y^{v}\left(  t\right)
,z^{v}\left(  t\right)  ,r^{v}\left(  t,.\right)  ,v\left(  t\right)  \right)
dW\left(  t\right) \\
& +%
{\displaystyle\int_{\Gamma}}
\gamma\left(  t-,x\left(  t-\right)  ,y\left(  t-\right)  ,z\left(  t-\right)
,r\left(  t,.\right)  ,v\left(  t\right)  ,\lambda\right)  \tilde{N}\left(
dt,d\lambda\right) \\
dy^{v}\left(  t\right)  = & -g\left(  t,x^{v}\left(  t\right)  ,y^{v}\left(
t\right)  ,z^{v}\left(  t\right)  ,r^{v}\left(  t,.\right)  ,v\left(
t\right)  \right)  dt+z^{v}\left(  t\right)  dW\left(  t\right) \\
& +%
{\displaystyle\int_{\Gamma}}
r^{v}\left(  t,\lambda\right)  \tilde{N}\left(  dt,d\lambda\right) \\
x^{v}\left(  0\right)  = & d,\text{ \ \ \ }y\left(  T\right)
=a,\text{\ \ \ \ \ \ \ \ \ \ \ \ \ \ \ }t\in\left[  0,T\right]
\end{array}
\right.
\]
where $b,\ \sigma,$ $\gamma$ and $g$ are given functions, $d$ is the initial
data, $a$ is terminal data, and $W=\left(  W\left(  t\right)  \right)
_{t\geq0}$ is a standard Brownian motion defined on a filtered probability
space $\left(  \Omega,\mathcal{F},\left(  \mathcal{F}_{t}\right)  _{t\geq
0},\mathcal{P}\right)  $, satisfying the usual conditions, and $\widetilde
{N}\left(  dt,d\lambda\right)  $ is a Poisson martingale measure with
characteristic $m\left(  d\lambda\right)  dt.$

The control variable $v=\left(  v_{t}\right)  $, called strict control, is an
$\mathcal{F}_{t}$-adapted process with values in some set $U$ of $\mathbb{R}$.
We denote by $\mathcal{U}$ the class of all strict admissible controls. The
criteria to be minimized over $\mathcal{U}$ has the form%
\begin{align*}
&  J^{\theta}\left(  v\right) \\
&  =\mathbb{E}\left(  \exp\theta\left[
{\displaystyle\int_{0}^{T}}
f\left(  t,x^{v}\left(  t\right)  ,y^{v}\left(  t\right)  ,z^{v}\left(
t\right)  ,r^{v}\left(  t,.\right)  ,v\left(  t\right)  \right)
dt+\Phi\left(  x^{v}\left(  T\right)  \right)  +\Psi\left(  y^{v}\left(
0\right)  \right)  \right]  \right)  ,
\end{align*}
where $\Phi,$ $\Psi$ and $f$ are given maps and $\left(  x^{v}\left(
t\right)  ,y^{v}\left(  t\right)  \right)  $ is the trajectories controlled by
$v.$

A control $u\in\mathcal{U}$ is called optimal if it satisfies%
\[
J\left(  u\right)  =\underset{v\in\mathcal{U}}{\inf}J\left(  v\right)  .
\]

To achieve the objective of this paper, and establish the necessary and
sufficient optimality conditions, the existence and uniqueness of the optimal
control which minimize the functional cost is proved, we proceed as follows.
Firstly, we give the optimality conditions for risk neutral controls. The idea
is to use the fact that the auxiliary state process $\xi^{v}\left(  t\right)
$ is the best intermediate step to translate the system of forward backward
SDE into three equations see $\left(  \ref{problem 1}\right)  $ in section 3.
Secondly, we suggest a transformation of the adjoint equations $\left(
p_{1},q_{1}\right)  ,$ $\left(  p_{2},q_{2}\right)  ,$ $\left(  p_{3}%
,q_{3}\right)  $ and $\pi\left(  \lambda\right)  $ into following adjoint
equations $\left(  \widetilde{p}_{2},\widetilde{q}_{2}\right)  ,$ $\left(
\widetilde{p}_{3},\widetilde{q}_{3}\right)  $ and $\widetilde{\pi}\left(
\lambda\right)  $ by applying the result obtained by both Yong \cite{Yong} and
Wu \cite{Wu}, but with some additional ideas, we use this transformation and
virtue of the logarithm transformed introduced by El Karoui \& Hamadene
\cite{El-Karoui-Hamadene} to solve this problem and driven the necessary as
well as sufficient optimality conditions of the type risk sensitive performance.

The results of this paper generalize all the previous works on the subject,
into FBSDE with jumps diffusion under the risk sensitive performance. we
combine between two important results the first one was such of Djehiche et al
\cite{BTT}, while the second was Chala \cite{Chala 02, Chala 03}, for more
details for the risk sensitive the readers can see the papers \cite{Shi-Wu2,
Tembin-Zhu-Basar} and references of therein.

The paper is organized as follows: In section 2, we give the precise problem
formulations, and introduce the risk-sensitive model, and give the various
assumptions used throughout this paper. In section 3, we shall study our
system of fully coupled forward backward SDE, the new approach method
transformation of the adjoint process is given and studied, SMP for
risk-neutral is given, which will be the main result in next section, we give
our first main result, the necessary optimality conditions for risk-sensitive
control problem under an additional hypothesis is established. In section 4,
The sufficient optimality conditions for risk-sensitive performance cost is
our second main result, is obtained under the convexity of the Hamiltonian
function. In section 5, we finished the paper by given an application, a
financial model of mean variance with risk-sensitive performance functional is
the best application for our problem. The conclusion and remarks is the last
section (section 6).

\section{Problem and settings}

In all what follows, we will be worked on the classical probability space
$\left(  \Omega,\mathcal{F},\left(  \mathcal{F}\right)  _{t\leq T}%
,\mathbb{P}\right)  ,$ such that $\mathcal{F}_{0}$ contains all the
$\mathbb{P-}$null sets, $\mathcal{F}_{T}=\mathcal{F}$ \ for an arbitrarily
fixed time horizon $T$, and $\left(  \mathcal{F}_{t}\right)  _{t\leq T}$
\ satisfies the usual conditions. We assume that the filtration $\left(
\mathcal{F}\right)  _{t\leq T}$ is generated by the following two mutually
independent processes

\begin{enumerate}
\item[(i)] $\left(  W\left(  t\right)  \right)  _{t\geq0}$ is a
one-dimensional standard Brownian motion$.$

\item[(ii)] Poisson random measure $N$ on $\left[  0,T\right]  \times\Gamma,$
where $\Gamma\subset\mathbb{R-}\left\{  0\right\}  .$ We denote by $\left(
\mathcal{F}_{t}^{W}\right)  _{t\leq T}$ ( resp. $\left(  \mathcal{F}_{t}%
^{N}\right)  _{t\leq T}$) the $\mathbb{P-}$augmentation of the natural
filtration of $W$ ( resp. $N$). Obviously, we have
\[
\mathcal{F}_{t}:=\sigma\left[  \int_{0}^{s}\int_{A}N\left(  d\lambda
,dr\right)  ;\text{ }s\leq t,\text{ }A\in\mathcal{B}\left(  \Gamma\right)
\right]  \vee\sigma\left[  W\left(  s\right)  ;\text{ }s\leq t\right]
\vee\mathcal{N},
\]

\end{enumerate}

where $\mathcal{N}$ contains all $\mathbb{P}-$null sets in $\mathcal{F}$, and
$\sigma_{1}\vee\sigma_{2}$ denotes the $\sigma-$field generated by $\sigma
_{1}\cup\sigma_{2}.$ We assume that the compensator of $N$ has the form
$\mu\left(  dt,d\lambda\right)  =m\left(  d\lambda\right)  dt,$ for some
positive and $\sigma-$finite L\'{e}vy measure $m$ on $\Gamma,$ endowed with
its Borel $\sigma-$field $\mathcal{B}\left(  \Gamma\right)  .$ We suppose that
$%
{\displaystyle\int_{\Gamma}}
1\wedge\left\vert \lambda\right\vert ^{2}m\left(  d\lambda\right)  <\infty,$
and write $\tilde{N}=N-mdt$ for the compensated jump martingale random measure
of $N.$

\begin{notation}
We need to define some additional notations. Given $s\leq t,$ let us introduce
the following spaces

$\mathcal{S}^{2}\left(  \left[  0,T\right]  ,\mathbb{R}\right)  $ the set of
$\mathbb{R}$- valued adapted cadl\`{a}g processes $P$ such that%
\[
\left\Vert P\right\Vert _{\mathcal{S}^{2}\left(  \left[  0,T\right]
,\mathbb{R}\right)  }:=\mathbb{E}\left[  \underset{r\in\left[  0,T\right]
}{\sup}\left\vert P\left(  r\right)  \right\vert ^{2}\right]  ^{\frac{1}{2}%
}<+\infty.
\]
$\mathcal{M}^{2}\left(  \left[  0,T\right]  ,\mathbb{R}\right)  $ is the set
of progressively measurable $\mathbb{R-}$valued processes $Q$ such that
\[
\left\vert \left\vert Q\right\vert \right\vert _{\mathcal{M}^{2}\left(
\left[  0,T\right]  ,\mathbb{R}\right)  }:=\mathbb{E}\left[  \int_{0}%
^{T}\left\vert Q\left(  r\right)  \right\vert ^{2}dr\right]  ^{\frac{1}{2}%
}<+\infty.
\]
$\mathcal{L}_{m}^{2}\left(  \left[  0,T\right]  ,\mathbb{R}\right)  $ is the
set of $\mathcal{B}\left(  \left[  0,T\right]  \times\Omega\right)
\otimes\mathcal{B}\left(  \Gamma\right)  $ measurable maps $R:\left[
0,T\right]  \times\Omega\times\Gamma\rightarrow\mathbb{R}$ such that
\[
\left\Vert R\right\Vert _{\mathcal{L}_{m}^{2}\left(  \left[  0,T\right]
,\mathbb{R}\right)  }:=\mathbb{E}\left[  \int_{0}^{T}%
{\displaystyle\int_{\Gamma}}
\left\vert R\left(  r\right)  \right\vert ^{2}m\left(  d\lambda\right)
dr\right]  ^{\frac{1}{2}}<+\infty,
\]

we denote by $\mathbb{E}$ the expectation with respect to $\mathbb{P}$

Let $T$ be a strictly positive real number and $U$ is a convex nonempty subset
of $\mathbb{R}.$
\end{notation}

\begin{definition}
Let $U$ be a nonempty closed subset in $\ \mathbb{R}.$ An admissible control
is a $U-$valued measurable $\mathcal{F}_{t}-$adapted process $v,$ such that
$\left\Vert v\right\Vert _{S^{2}}<\infty.$ We denote by $\ \mathcal{U}$ the
set of all admissible controls
\end{definition}

For all $v\in\mathcal{U}$, we consider the following fully coupled
forward-backward with jump system%
\begin{equation}
\left\{
\begin{array}
[c]{ll}%
dx\left(  t\right)  = & b\left(  t,x\left(  t\right)  ,y\left(  t\right)
,z\left(  t\right)  ,r\left(  t,.\right)  ,v\left(  t\right)  \right)  dt\\
& +\sigma\left(  t,x\left(  t\right)  ,y\left(  t\right)  ,z\left(  t\right)
,r\left(  t,.\right)  ,v\left(  t\right)  \right)  dW\left(  t\right) \\
& +%
{\displaystyle\int_{\Gamma}}
\gamma\left(  t-,x\left(  t-\right)  ,y\left(  t-\right)  ,z\left(  t-\right)
,r\left(  t-,\lambda\right)  ,v\left(  t-\right)  ,\lambda\right)  \tilde
{N}\left(  dt,d\lambda\right) \\
dy\left(  t\right)  = & -g\left(  t,x\left(  t\right)  ,y\left(  t\right)
,z\left(  t\right)  ,r\left(  t,.\right)  ,v\left(  t\right)  \right)
dt+z\left(  t\right)  dW\left(  t\right) \\
& +%
{\displaystyle\int_{\Gamma}}
r\left(  t,\lambda\right)  \tilde{N}\left(  dt,d\lambda\right) \\
x\left(  0\right)  = & d,\text{ \ }y\left(  T\right)  =a,\text{
\ \ \ \ \ \ \ \ \ \ \ \ \ \ \ }t\in\left[  0,T\right]
\end{array}
\right.  \label{EQ}%
\end{equation}

where $b:\left[  0,T\right]  \times%
\mathbb{R}
\times%
\mathbb{R}
\times%
\mathbb{R}
\times\Gamma\times\mathcal{U}\rightarrow%
\mathbb{R}
,$ $\sigma:\left[  0,T\right]  \times%
\mathbb{R}
\times%
\mathbb{R}
\times%
\mathbb{R}
\times\Gamma\times\mathcal{U}\rightarrow\mathcal{%
\mathbb{R}
}$, $g:\left[  0,T\right]  \times%
\mathbb{R}
\times%
\mathbb{R}
\times%
\mathbb{R}
\times\Gamma\times\mathcal{U}\rightarrow%
\mathbb{R}
.$ and $\gamma:\left[  0,T\right]  \times%
\mathbb{R}
\times%
\mathbb{R}
\times%
\mathbb{R}
\times\Gamma\times\mathcal{U}\boldsymbol{\longrightarrow}\mathbb{R}$ are given
maps. If $\left(  x\left(  .\right)  ,y\left(  .\right)  ,z\left(  .\right)
,r\left(  .,.\right)  \right)  $ is the unique solution of $\left(
\ref{EQ}\right)  $ associated with $v\left(  .\right)  \in\mathcal{U}$.

The functional cost of the risk-sensitive type is given by%
\begin{align}
&  J^{\theta}\left(  v\right) \label{J}\\
&  =\mathbb{E}\left(  \exp\theta\left[
{\displaystyle\int_{0}^{T}}
f\left(  t,x\left(  t\right)  ,y\left(  t\right)  ,z\left(  t\right)
,r\left(  t,.\right)  ,v\left(  t\right)  \right)  dt+\Phi\left(  x^{v}\left(
T\right)  \right)  +\Psi\left(  y^{v}\left(  0\right)  \right)  \right]
\right)  ,\nonumber
\end{align}

where $\Phi:%
\mathbb{R}
\rightarrow%
\mathbb{R}
,$ $\Psi:%
\mathbb{R}
\rightarrow%
\mathbb{R}
,$ $f:\left[  0,T\right]  \times%
\mathbb{R}
\times%
\mathbb{R}
\times%
\mathbb{R}
\times\Gamma\times\mathcal{U}\rightarrow%
\mathbb{R}
$ are given maps, and $\theta>0$ is called the risk-sensitive parameter.

Our risk-sensitive stochastic optimal control problem is stated as follows:
For given $\left(  t,x\left(  t\right)  ,y\left(  t\right)  ,z\left(
t\right)  ,r\left(  t,.\right)  \right)  \in\left[  0,T\right]  \times
\mathbb{R}^{4},$ minimize $\left(  \ref{J}\right)  $ subject to $\left(
\ref{EQ}\right)  $ over $\mathcal{U}.$%
\begin{equation}
\inf_{v\in\mathcal{U}}J^{\theta}\left(  v\right)  =J^{\theta}\left(  u\right)
. \label{inf}%
\end{equation}

A control that solves the problem $\left\{  \left(  \ref{EQ}\right)  ,\left(
\ref{J}\right)  ,\left(  \ref{inf}\right)  \right\}  $ is called optimal. Our
goal is to establish a necessary optimality conditions as well as a sufficient
optimality conditions, satisfied by a given optimal control, in the form of
stochastic maximum principle (SMP in short).

We give some notations $\Upsilon=\left(  x^{v}\left(  t\right)  ,y^{v}\left(
t\right)  ,z^{v}\left(  t\right)  ,r^{v}\left(  t,.\right)  \right)  ^{\top}$,
where $\left(  .\right)  ^{\top}$ denotes the transport of the matrix,

and $M\left(  t,\Upsilon\right)  =\left(
\begin{tabular}
[c]{c}%
$b$\\
$\sigma$\\
$-g$%
\end{tabular}
\ \right)  \left(  t,\Upsilon\right)  .$

We introduce the following assumptions.

$\mathbf{H}_{1}:$

For each $\Upsilon\in%
\mathbb{R}
\times%
\mathbb{R}
\times%
\mathbb{R}
$, $M\left(  t,\Upsilon\right)  $ is an $\mathcal{F}_{t}-$measurable process
defined on $[0,T]$ with $M\left(  t,\Upsilon\right)  \in\mathcal{M}^{2}\left(
\left[  0,T\right]  ;%
\mathbb{R}
\times%
\mathbb{R}
\times%
\mathbb{R}
\times\Gamma\right)  .$

$\mathbf{H}_{2}:$

$M(t,.)$ satisfies Lipschitz conditions$:$ There exists a constant $k>0,$ such
that%
\[
\left\vert M\left(  t,\Upsilon\right)  -M\left(  t,\Upsilon^{\prime}\right)
\right\vert \leq k\left\vert \Upsilon-\Upsilon^{\prime}\right\vert
\forall\Upsilon,\Upsilon^{\prime}\in%
\mathbb{R}
\times%
\mathbb{R}
\times%
\mathbb{R}
\times\Gamma,\forall t\in\lbrack0,T].
\]

The following monotonic conditions introduced in \cite{Wu}, are the main
assumptions in this paper.

$\mathbf{H}_{3}:$

$\left\langle M\left(  t,\Upsilon\right)  -M\left(  t,\Upsilon^{\prime
}\right)  ,\Upsilon-\Upsilon\right\rangle \leq\beta\left\vert \Upsilon
-\Upsilon^{\prime}\right\vert ^{2},$ for every\ $\Upsilon=\left(
x,y,z,r\right)  ^{\ast}$ and$\ \Upsilon^{\prime}=\left(  x^{\prime},y^{\prime
},z^{\prime},r^{\prime}\right)  ^{\ast}\in%
\mathbb{R}
\times%
\mathbb{R}
\times%
\mathbb{R}
\times\Gamma$, $\forall$ $t\in\lbrack0,T],$ where $\beta$ is a positive
constant.\textbf{\ }

$\mathcal{U}$ \ is a convex subset of $\mathbb{R}.$

\begin{proposition}
For any given admissible control $v\left(  .\right)  $ and\ under the
assumptions $(\mathbf{H}_{\mathbf{1}})$, $(\mathbf{H}_{\mathbf{2}})$ and
$(\mathbf{H}_{\mathbf{3}})$, the fully coupled FBSDE with jump diffusion
$\left(  \ref{EQ}\right)  \ $admits an unique solution

$\left(  x^{v}\left(  t\right)  ,y^{v}\left(  t\right)  ,z^{v}\left(
t\right)  ,r^{v}\left(  t,.\right)  \right)  \in\left(  \mathcal{M}^{2}\left(
\left[  0,T\right]  ;%
\mathbb{R}
\times%
\mathbb{R}
\times\Gamma\right)  \right)  ^{2}\times\mathcal{S}^{2}\left(  \left[
0,T\right]  ;%
\mathbb{R}
\times\Gamma\right)  .$
\end{proposition}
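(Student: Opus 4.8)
The plan is to establish existence and uniqueness for the fully coupled FBSDE with jumps $\left(\ref{EQ}\right)$ by adapting the monotonicity (continuation) method of Peng--Wu and Wu \cite{Wu} to the jump setting. The central tool is the assumption $(\mathbf{H}_{3})$, which encodes a monotonicity condition on the coefficient $M = (b,\sigma,-g)^{\top}$ relative to the state variable $\Upsilon = (x,y,z,r)$; together with the Lipschitz bound $(\mathbf{H}_{2})$ and the integrability requirement $(\mathbf{H}_{1})$, this is exactly the structure that drives the a priori estimates. First I would treat uniqueness: given two solutions, I would apply It\^{o}'s formula for jump processes to the inner product of the difference of the forward components with the difference of the backward components, carefully tracking the compensated Poisson integral terms $\int_{\Gamma}(\cdot)\tilde{N}(dt,d\lambda)$. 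The forward--backward coupling causes the boundary contributions at $t=0$ and $t=T$ to combine with a definite sign, and invoking $(\mathbf{H}_{3})$ on the integrand then forces the difference to vanish in the $\mathcal{M}^{2}$ and $\mathcal{S}^{2}$ norms, giving uniqueness.

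For existence, I would set up the method of continuation. Introduce a family of FBSDEs indexed by a parameter $\alpha \in [0,1]$ that interpolates between a trivially solvable decoupled system (at $\alpha = 0$, where the forward and backward equations are driven by a simple monotone coefficient for which solvability is classical) and the target system $\left(\ref{EQ}\right)$ (at $\alpha = 1$). The goal is to prove that the set of $\alpha$ for which the corresponding system is uniquely solvable is both open and closed in $[0,1]$, and nonempty; since $[0,1]$ is connected, solvability then propagates from $\alpha = 0$ to $\alpha = 1$. The openness step requires a uniform a priori estimate: I would show there exists a fixed step size $\delta_{0} > 0$, independent of $\alpha$, such that solvability at $\alpha$ implies solvability at $\alpha + \delta$ for every $\delta \le \delta_{0}$. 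This is proved by a contraction-mapping argument on the Banach space $\left(\mathcal{M}^{2}\right)^{2} \times \mathcal{S}^{2}$, where the contraction constant is controlled by $\delta$ and the Lipschitz constant $k$ from $(\mathbf{H}_{2})$, while the monotonicity constant $\beta$ from $(\mathbf{H}_{3})$ guarantees the estimates do not degrade as $\alpha$ varies.

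The a priori estimates that underpin both steps come from again applying the jump It\^{o} formula and the Burkholder--Davis--Gundy inequality to handle the $dW$ and $\tilde{N}(dt,d\lambda)$ martingale parts, yielding bounds of the form $\|(x,y,z,r)\|^{2} \le C\bigl(|d|^{2} + |a|^{2} + \|M(\cdot,0)\|_{\mathcal{M}^{2}}^{2}\bigr)$ with $C$ depending only on $k$, $\beta$, $T$, and the L\'{e}vy measure $m$. The main obstacle I anticipate is the correct bookkeeping of the jump terms throughout: the compensated measure $\tilde{N}$ contributes an additional quadratic-variation term $\int_{\Gamma}|\cdot|^{2}\,m(d\lambda)$ in every It\^{o} expansion, and one must verify that the monotonicity inequality $(\mathbf{H}_{3})$ is stated over $\Upsilon \in \mathbb{R}\times\mathbb{R}\times\mathbb{R}\times\Gamma$ in a way compatible with these measure-valued $r$-components, so that the sign structure survives the presence of jumps. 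Once the jump contributions are shown to respect the same monotone sign as in the continuous case, the continuation argument closes exactly as in the Brownian setting, and the solution lands in the asserted spaces $\left(\mathcal{M}^{2}\right)^{2}\times\mathcal{S}^{2}$.
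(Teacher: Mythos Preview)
Your proposal is correct and follows precisely the monotonicity/continuation method of \cite{Wu}, which is exactly what the paper does: its entire proof is the single sentence ``The proof can be seen in \cite{Wu}.'' What you have written is a faithful sketch of Wu's argument (uniqueness via It\^{o}'s formula and $(\mathbf{H}_{3})$, existence via continuation in a parameter $\alpha\in[0,1]$ with uniform step size controlled by $(\mathbf{H}_{2})$ and $(\mathbf{H}_{3})$), so there is no methodological difference to report.
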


\begin{proof}
The proof can be seen in \cite{Wu}.
\end{proof}

Next, we assume that

$\mathbf{H}_{4}:$

$i)b,$ $\sigma,$ $g,$ $f,$ $\Phi$ and $\Psi$ are continuously differentiable
with respect to $\left(  x^{v},y^{v},z^{v},r^{v}\left(  .\right)  \right)  .$

$ii)$ All the derivatives of $b,$ $\sigma,$ $g$ and $f$ are bounded by

$C\left(  1+\left\vert x^{v}\right\vert +\left\vert y^{v}\right\vert
+\left\vert z^{v}\right\vert +\left\vert v\right\vert +\left\vert
r^{v}\right\vert \right)  .$

$iii)$ The derivatives of $\Phi,$ $\Psi$ are bounded by $C\left(  1+\left\vert
x^{v}\right\vert \right)  $ and $C\left(  1+\left\vert y^{v}\right\vert
\right)  $ respectively.

Under the above assumptions, for every $v\in\mathcal{U}$ equation $\left(
\ref{EQ}\right)  $ has a unique strong solution and the function cost
$J^{\theta}$ is well defined from $\mathcal{U}$ into $%
\mathbb{R}
$.

\section{Necessary optimality conditions and auxiliary process}

First of all, we may introduce an auxiliary state process $\xi^{v}\left(
t\right)  $ which is solution of the following stochastic differential
equation (SDE in short):%
\[
d\xi^{v}\left(  t\right)  =f\left(  t,x^{v}\left(  t\right)  ,y^{v}\left(
t\right)  ,z^{v}\left(  t\right)  ,r^{v}\left(  t,.\right)  ,v\left(
t\right)  \right)  dt,\text{\ }\xi^{v}\left(  0\right)  =0.
\]

From the above auxiliary process, the fully coupled forward-backward type
control problem is equivalent to%
\begin{equation}
\left\{
\begin{array}
[c]{ll}%
\inf\limits_{v\in\mathcal{U}} & \mathbb{E}\left[  \exp\theta\left\{
\Phi\left(  x^{v}\left(  T\right)  \right)  +\Psi\left(  y^{v}\left(
0\right)  \right)  +\xi\left(  T\right)  \right\}  \right]  ,\\
\text{subject to} & \\
d\xi^{v}\left(  t\right)  = & f\left(  t,x^{v}\left(  t\right)  ,y^{v}\left(
t\right)  ,z^{v}\left(  t\right)  ,r^{v}\left(  t,.\right)  ,v\left(
t\right)  \right)  dt,\\
dx^{v}\left(  t\right)  = & b\left(  t,x^{v}\left(  t\right)  ,y^{v}\left(
t\right)  ,z^{v}\left(  t\right)  ,r^{v}\left(  t,.\right)  ,v\left(
t\right)  \right)  dt\\
& +\sigma\left(  t,x^{v}\left(  t\right)  ,y^{v}\left(  t\right)
,z^{v}\left(  t\right)  ,r^{v}\left(  t,.\right)  ,v\left(  t\right)  \right)
dW\left(  t\right) \\
& +%
{\displaystyle\int_{\Gamma}}
\gamma\left(  t,x\left(  t-\right)  ,y\left(  t-\right)  ,z\left(  t-\right)
,r\left(  t-,\lambda\right)  ,v\left(  t-\right)  ,\lambda\right)  \tilde
{N}\left(  dt,d\lambda\right)  ,\\
dy^{v}\left(  t\right)  = & -g\left(  t,x^{v}\left(  t\right)  ,y^{v}\left(
t\right)  ,z^{v}\left(  t\right)  ,r^{v}\left(  t,.\right)  ,v\left(
t\right)  \right)  dt+z^{v}\left(  t\right)  dW\left(  t\right) \\
& +%
{\displaystyle\int_{\Gamma}}
r^{v}\left(  t,\lambda\right)  \tilde{N}\left(  dt,d\lambda\right)  ,\\
\xi^{v}\left(  0\right)  = & 0,\text{ }x^{v}\left(  0\right)  =d\text{,\ }%
y^{v}\left(  T\right)  =a.
\end{array}
\right.  \label{problem 1}%
\end{equation}

We denote by%
\[
A_{T}^{\theta}:=\exp\theta\left\{  \Phi\left(  x^{v}\left(  T\right)  \right)
+\Psi\left(  y^{v}\left(  0\right)  \right)  +%
{\displaystyle\int_{0}^{T}}
f\left(  t,x^{v}\left(  t\right)  ,y^{v}\left(  t\right)  ,z^{v}\left(
t\right)  ,r^{v}\left(  t,.\right)  ,v\left(  t\right)  \right)  dt\right\}
,
\]
and we can put also
\[
\Theta_{T}:=\Phi\left(  x^{v}\left(  T\right)  \right)  +\Psi\left(
y^{v}\left(  0\right)  \right)  +%
{\displaystyle\int_{0}^{T}}
f\left(  t,x^{v}\left(  t\right)  ,y^{v}\left(  t\right)  ,z^{v}\left(
t\right)  ,r^{v}\left(  t,.\right)  ,v\left(  t\right)  \right)  dt,
\]

the risk-sensitive loss functional is given by%
\begin{align*}
\Theta_{\theta}  &  :=\frac{1}{\theta}\log\mathbb{E}\left(  \exp\left\{
\Phi\left(  x^{v}\left(  T\right)  \right)  +\Psi\left(  y^{v}\left(
0\right)  \right)  \right.  \right. \\
&  \left.  \left.  +%
{\displaystyle\int_{0}^{T}}
f\left(  t,x^{v}\left(  t\right)  ,y^{v}\left(  t\right)  ,z^{v}\left(
t\right)  ,r^{v}\left(  t,.\right)  ,v\left(  t\right)  \right)  dt\right\}
\right) \\
&  =\frac{1}{\theta}\log\mathbb{E}\left(  \exp\left\{  \theta\Theta
_{T}\right\}  \right)  .
\end{align*}

When the risk-sensitive index $\theta$ is small, the functional $\Theta
_{\theta}$ can be expanded as $\mathbb{E}\left(  \Theta_{T}\right)
+\frac{\theta}{2}Var\left(  \Theta_{T}\right)  +O\left(  \theta^{2}\right)  ,$
where, $Var\left(  \Theta_{T}\right)  $ denotes the variance of $\Theta_{T}.$
If $\theta<0,$ the variance of $\Theta_{T},$ as a measure of risk, improves
the performance $\Theta_{\theta}$, in which case the optimizer is called
\textit{risk seeker. }But, when $\theta>0,$ the variance of $\Theta_{T}$
worsens the performance $\Theta_{\theta}$, in which case the optimizer is
called \textit{risk averse}. The risk-neutral loss functional \textit{\ }%
$\mathbb{E}\left(  \Theta_{T}\right)  $ can be seen as a limit of
risk-sensitive functional $\Theta_{\theta}$ when $\theta\rightarrow0$, for
more details the reader can see the papers $\cite{chala hefayed khallout}.$

\begin{notation}
\label{Notation}We will use the following notation throughout this
paper.\newline For $\phi\in\left\{  b,\sigma,f,g,H^{\theta},\widetilde
{H}^{\theta}\right\}  $, we define%
\[
\left\{
\begin{array}
[c]{l}%
\phi\left(  t\right)  =\phi\left(  t,x^{v}\left(  t\right)  ,y^{v}\left(
t\right)  ,z^{v}\left(  t\right)  ,r^{v}\left(  t,.\right)  ,v\left(
t\right)  \right)  ,\\
\partial\phi\left(  t\right)  =\phi\left(  t,x^{v}\left(  t\right)
,y^{v}\left(  t\right)  ,z^{v}\left(  t\right)  ,r^{v}\left(  t,.\right)
,v\left(  t\right)  \right) \\
-\phi\left(  t,x^{v}\left(  t\right)  ,y^{v}\left(  t\right)  ,z^{v}\left(
t\right)  ,r^{v}\left(  t,.\right)  ,u\left(  t\right)  \right)  ,\\
\phi_{\zeta}\left(  t\right)  =\frac{\partial\phi}{\partial\zeta}\left(
t,x^{v}\left(  t\right)  ,y^{v}\left(  t\right)  ,z^{v}\left(  t\right)
,r^{v}\left(  t,.\right)  ,v\left(  t\right)  \right)  ,\text{ }\zeta=x,\text{
}y,\text{ }z,\text{ }r\left(  .\right)  .
\end{array}
\right.
\]

and $\gamma\left(  t-,\lambda\right)  $ it means that the function $\gamma$ is c\`{a}dlag.
\end{notation}

Where $v_{t}$ in an admissible control from $\mathcal{U}$.

We assume that $(\mathbf{H}_{\mathbf{1}})$, $(\mathbf{H}_{\mathbf{2}%
}),(\mathbf{H}_{\mathbf{3}})$ and $(\mathbf{H}_{\mathbf{4}})$ hold, we might
apply the SMP for risk-neutral of fully coupled forward-backward type control
from Yong $\cite{Yong},$ to augmented state dynamics $\left(  \xi
,x,y,z,r\right)  $ and derive the adjoint equation. There exist unique
$\mathcal{F}_{t}-$adapted of processes $\left(  p_{1},q_{1},\pi_{1}\right)
,\left(  p_{2},q_{2},\pi_{2}\right)  ,\left(  p_{3},q_{3},\pi_{3}\right)  ,$
which solve the following system matrix of backward SDEs%
\begin{equation}
\left\{
\begin{array}
[c]{ll}%
d\overrightarrow{p}\left(  t\right)  & =\left(
\begin{tabular}
[c]{l}%
$dp_{1}\left(  t\right)  $\\
$dp_{2}\left(  t\right)  $\\
$dp_{3}\left(  t\right)  $%
\end{tabular}
\ \ \ \right) \\
& =-\left(
\begin{tabular}
[c]{lll}%
$0$ & $0$ & $0$\\
$f_{x}\left(  t\right)  $ & $b_{x}\left(  t\right)  $ & $g_{x}\left(
t\right)  $\\
$f_{y}\left(  t\right)  $ & $b_{y}\left(  t\right)  $ & $g_{y}\left(
t\right)  $%
\end{tabular}
\ \ \ \right)  \left(
\begin{tabular}
[c]{l}%
$p_{1}\left(  t\right)  $\\
$p_{2}\left(  t\right)  $\\
$p_{3}\left(  t\right)  $%
\end{tabular}
\ \ \ \right)  dt\\
& -\left(
\begin{tabular}
[c]{lll}%
$0$ & $0$ & $0$\\
$0$ & $\sigma_{x}\left(  t\right)  $ & $0$\\
$0$ & $\sigma_{y}\left(  t\right)  $ & $0$%
\end{tabular}
\ \ \ \right)  \left(
\begin{tabular}
[c]{l}%
$q_{1}\left(  t\right)  $\\
$q_{2}\left(  t\right)  $\\
$q_{3}\left(  t\right)  $%
\end{tabular}
\ \ \ \right)  dt\\
& +%
{\displaystyle\int_{\Gamma}}
\left(
\begin{tabular}
[c]{lll}%
$0$ & $0$ & $0$\\
$0$ & $\gamma_{x}\left(  t-,\lambda\right)  $ & $0$\\
$0$ & $\gamma_{y}\left(  t-,\lambda\right)  $ & $0$%
\end{tabular}
\ \ \ \right)  \left(
\begin{tabular}
[c]{l}%
$\pi_{1}\left(  t,\lambda\right)  $\\
$\pi_{2}\left(  t,\lambda\right)  $\\
$\pi_{3}\left(  t,\lambda\right)  $%
\end{tabular}
\ \ \ \right)  m\left(  d\lambda\right)  dt\\
& +\left(
\begin{tabular}
[c]{l}%
$q_{1}\left(  t\right)  $\\
$q_{2}\left(  t\right)  $\\
$q_{3}\left(  t\right)  $%
\end{tabular}
\ \right)  dW\left(  t\right)  +%
{\displaystyle\int_{\Gamma}}
\left(
\begin{tabular}
[c]{l}%
$\pi_{1}\left(  t,\lambda\right)  $\\
$\pi_{2}\left(  t,\lambda\right)  $\\
$\pi_{3}\left(  t,\lambda\right)  $%
\end{tabular}
\ \right)  \tilde{N}\left(  dt,d\lambda\right) \\
\left(
\begin{tabular}
[c]{l}%
$p_{1}\left(  T\right)  $\\
$p_{2}\left(  T\right)  $%
\end{tabular}
\ \ \ \ \ \ \ \right)  & =\theta A_{T}\left(
\begin{tabular}
[c]{l}%
$1$\\
$\Phi_{x}\left(  x_{T}^{u}\right)  $%
\end{tabular}
\ \right) \\
p_{3}\left(  0\right)  & =\theta\Psi_{y}\left(  y^{u}\left(  0\right)
\right)  A_{T},
\end{array}
\right.  \label{adj1}%
\end{equation}

with $\mathbb{E}\left[
{\displaystyle\sum\limits_{i=1}^{3}}
\sup\limits_{0\leq t\leq T}\left\vert p_{i}\left(  t\right)  \right\vert ^{2}+%
{\displaystyle\sum\limits_{i=1}^{2}}
{\displaystyle\int_{0}^{T}}
\left\vert q_{i}\left(  t\right)  \right\vert ^{2}dt\right]  <\infty,$ and

$q_{3}\left(  t\right)  =-Tr\left[  \left(
\begin{array}
[c]{cc}%
f_{z}\left(  t\right)  & b_{z}\left(  t\right) \\
\sigma_{z}\left(  t\right)  & g_{z}\left(  t\right)
\end{array}
\right)  \left(
\begin{array}
[c]{cc}%
p_{1}\left(  t\right)  & q_{2}\left(  t\right) \\
p_{2}\left(  t\right)  & p_{3}\left(  t\right)
\end{array}
\right)  \right]  +%
{\displaystyle\int_{\Gamma}}
\gamma_{z}\left(  t-,\lambda\right)  \pi_{2}\left(  t,\lambda\right)  m\left(
d\lambda\right)  ,$

$\pi_{3}\left(  t,\lambda\right)  =-Tr\left[  \left(
\begin{array}
[c]{cc}%
f_{r}\left(  t\right)  & b_{r}\left(  t\right) \\
\sigma_{r}\left(  t\right)  & g_{r}\left(  t\right)
\end{array}
\right)  \left(
\begin{array}
[c]{cc}%
p_{1}\left(  t\right)  & q_{2}\left(  t\right) \\
p_{2}\left(  t\right)  & p_{3}\left(  t\right)
\end{array}
\right)  \right]  +%
{\displaystyle\int_{\Gamma}}
\gamma_{r}\left(  t-,\lambda\right)  \pi_{2}\left(  t,\lambda\right)  m\left(
d\lambda\right)  .$

To this end we may define $\left(  \ref{adj1}\right)  $ in the compact form
as
\[
\left\{
\begin{array}
[c]{l}%
d\overrightarrow{p}\left(  t\right)  =\left(
\begin{tabular}
[c]{l}%
$dp_{1}\left(  t\right)  $\\
$dp_{2}\left(  t\right)  $\\
$dp_{3}\left(  t\right)  $%
\end{tabular}
\ \right)  =-F\left(  t\right)  dt+\Sigma\left(  t\right)  dW\left(  t\right)
+%
{\displaystyle\int_{\Gamma}}
R\left(  t,\lambda\right)  \tilde{N}\left(  dt,d\lambda\right) \\
\left(
\begin{tabular}
[c]{l}%
$p_{1}\left(  T\right)  $\\
$p_{2}\left(  T\right)  $%
\end{tabular}
\ \right)  =\theta A_{T}\left(
\begin{tabular}
[c]{l}%
$1$\\
$\Phi_{x}\left(  x_{T}^{u}\right)  $%
\end{tabular}
\ \right)  ,\text{ and }p_{3}\left(  0\right)  =\theta\Psi_{y}\left(
y^{u}\left(  0\right)  \right)  A_{T},
\end{array}
\right.
\]

where
\[%
\begin{array}
[c]{ll}%
F\left(  t\right)  = & \left(
\begin{tabular}
[c]{lll}%
$0$ & $0$ & $0$\\
$f_{x}\left(  t\right)  $ & $b_{x}\left(  t\right)  $ & $g_{x}\left(
t\right)  $\\
$f_{y}\left(  t\right)  $ & $b_{y}\left(  t\right)  $ & $g_{y}\left(
t\right)  $%
\end{tabular}
\ \right)  \left(
\begin{tabular}
[c]{l}%
$p_{1}\left(  t\right)  $\\
$p_{2}\left(  t\right)  $\\
$p_{3}\left(  t\right)  $%
\end{tabular}
\ \right) \\
& +\left(
\begin{tabular}
[c]{lll}%
$0$ & $0$ & $0$\\
$0$ & $\sigma_{x}\left(  t\right)  $ & $0$\\
$0$ & $\sigma_{y}\left(  t\right)  $ & $0$%
\end{tabular}
\ \right)  \left(
\begin{tabular}
[c]{l}%
$q_{1}\left(  t\right)  $\\
$q_{2}\left(  t\right)  $\\
$q_{3}\left(  t\right)  $%
\end{tabular}
\ \right) \\
& -%
{\displaystyle\int_{\Gamma}}
\left(
\begin{tabular}
[c]{lll}%
$0$ & $0$ & $0$\\
$0$ & $\gamma_{x}\left(  t-,\lambda\right)  $ & $0$\\
$0$ & $\gamma_{y}\left(  t-,\lambda\right)  $ & $0$%
\end{tabular}
\ \right)  \left(
\begin{tabular}
[c]{l}%
$\pi_{1}\left(  t,\lambda\right)  $\\
$\pi_{2}\left(  t,\lambda\right)  $\\
$\pi_{3}\left(  t,\lambda\right)  $%
\end{tabular}
\ \right)  m\left(  d\lambda\right)  ,
\end{array}
\]%
\[%
\begin{array}
[c]{cc}%
\Sigma\left(  t\right)  = & \left(
\begin{tabular}
[c]{l}%
$q_{1}\left(  t\right)  $\\
$q_{2}\left(  t\right)  $\\
$q_{3}\left(  t\right)  $%
\end{tabular}
\ \right)  ,
\end{array}
\]
and%
\[%
\begin{array}
[c]{ll}%
R\left(  t,.\right)  = & \left(
\begin{tabular}
[c]{l}%
$\pi_{1}\left(  t,.\right)  $\\
$\pi_{2}\left(  t,.\right)  $\\
$\pi_{3}\left(  t,.\right)  $%
\end{tabular}
\ \right)  .
\end{array}
\]

We suppose here that $\widetilde{H}^{\theta}$ be the Hamiltonian associated
with the optimal state dynamics $\left(  \xi^{u},x^{u},y^{u},z^{u}%
,r^{u}(.\right)  ),$ and the triplet of adjoint processes $\left(
\overrightarrow{p}\left(  t\right)  ,\overrightarrow{q}\left(  t\right)
,\overrightarrow{\pi}\left(  t,.\right)  \right)  $ is given by%
\[%
\begin{array}
[c]{l}%
\widetilde{H}^{\theta}\left(  t,\xi^{u}\left(  t\right)  ,x^{u}\left(
t\right)  ,y^{u}\left(  t\right)  ,z^{u}\left(  t\right)  ,r\left(
t,.\right)  ,u\left(  t\right)  ,\overrightarrow{p}\left(  t\right)
,\overrightarrow{q}\left(  t\right)  ,\overrightarrow{\pi}\left(  t,.\right)
\right) \\
=\left(
\begin{tabular}
[c]{l}%
$f\left(  t\right)  $\\
$b\left(  t\right)  $\\
$g\left(  t\right)  $%
\end{tabular}
\ \right)  \left(  \overrightarrow{p}\left(  t\right)  \right)  ^{\top
}+\left(
\begin{tabular}
[c]{l}%
$0$\\
$\sigma\left(  t\right)  $\\
$0$%
\end{tabular}
\ \right)  \left(  \overrightarrow{q}\left(  t\right)  \right)  ^{\top}\\
-%
{\displaystyle\int_{\Gamma}}
\left(
\begin{tabular}
[c]{l}%
$0$\\
$\gamma\left(  t-,\lambda\right)  $\\
$0$%
\end{tabular}
\ \right)  \left(  \overrightarrow{\pi}\left(  t,\lambda\right)  \right)
^{\top}m\left(  d\lambda\right)  .
\end{array}
\]

\begin{theorem}
\label{theoriskneutral}Assume that $(\mathbf{H}_{\mathbf{1}})$, $(\mathbf{H}%
_{\mathbf{2}}),(\mathbf{H}_{\mathbf{3}})$ and $(\mathbf{H}_{\mathbf{4}})$ hold.

If $\left(  \xi^{u}\left(  .\right)  ,x^{u}\left(  .\right)  ,y^{u}\left(
.\right)  ,z^{u}\left(  .\right)  ,r(.,.\right)  )$ is an optimal solution of
the risk-neutral control problem $\left(  \ref{problem 1}\right)  ,$ then
there exist $\mathcal{F}_{t}-$adapted processes

$\left(  \left(  p_{1},q_{1},\pi_{1}\right)  ,\left(  p_{2},q_{2},\pi
_{2}\right)  ,\left(  p_{3},q_{3},\pi_{3}\right)  \right)  $ that satisfy
$\left(  \ref{adj1}\right)  ,$ such that%
\begin{equation}
\widetilde{H}_{v}^{\theta}\left(  t\right)  \left(  u_{t}-v_{t}\right)  \geq0,
\label{SMP}%
\end{equation}
for all $u\in\mathcal{U},$ almost every $t$ and $\mathbb{P}-$almost surely,
where $\widetilde{H}_{v}^{\theta}\left(  t\right)  $ is defined in notation
$\left(  \ref{Notation}\right)  $.
\end{theorem}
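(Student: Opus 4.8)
The plan is to read (\ref{problem 1}) as a \emph{risk-neutral} control problem for the augmented fully coupled forward-backward system with state $(\xi^{v},x^{v},y^{v},z^{v},r^{v})$, whose cost is the pure terminal/initial functional $\mathbb{E}\big[\exp\theta\{\Phi(x^{v}(T))+\Psi(y^{v}(0))+\xi^{v}(T)\}\big]$, and then to apply the fully coupled stochastic maximum principle of Yong \cite{Yong}, in its jump-diffusion form (following Tang--Li \cite{TangLi} and Wu \cite{Wu}), to this system. The first observation is that appending $\xi$ only enlarges the forward block by one component whose drift is $f$ and which does not feed back into $(b,\sigma,\gamma,g)$; hence the Lipschitz and monotonicity hypotheses $(\mathbf{H}_1)$--$(\mathbf{H}_3)$ are inherited by the augmented generator, and the well-posedness Proposition guarantees that for each $v\in\mathcal{U}$ the augmented state is well defined.

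Since $\mathcal{U}$ is convex and $u$ is optimal, I would use a convex perturbation: for $v\in\mathcal{U}$ and $\epsilon\in[0,1]$ set $u^{\epsilon}=u+\epsilon(v-u)\in\mathcal{U}$, and denote by $(\xi^{\epsilon},x^{\epsilon},y^{\epsilon},z^{\epsilon},r^{\epsilon})$ the corresponding state. First I would establish the variational (linearized) FBSDE: the variational processes $(\hat\xi,\hat x,\hat y,\hat z,\hat r)$, obtained by differentiating $(\xi^{\epsilon},x^{\epsilon},y^{\epsilon},z^{\epsilon},r^{\epsilon})$ at $\epsilon=0$, solve a linear fully coupled forward-backward system with jumps whose coefficients are the partial derivatives $b_{x},b_{y},b_{z},b_{r},\sigma_{\bullet},\gamma_{\bullet},g_{\bullet},f_{\bullet}$ evaluated along the optimal trajectory, with source terms $b_{v}(v-u)$, $\sigma_{v}(v-u)$, $\gamma_{v}(v-u)$, $g_{v}(v-u)$, $f_{v}(v-u)$. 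Solvability of this linear system and the $L^{2}$ convergence of the difference quotients follow from $(\mathbf{H}_2)$, $(\mathbf{H}_3)$ and $(\mathbf{H}_4)$ together with Wu's monotonicity estimates, exactly as in the existence proof cited after the Proposition.

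Next I would differentiate the cost. Writing $A_{T}=\exp\theta\{\Phi(x^{u}(T))+\Psi(y^{u}(0))+\xi^{u}(T)\}$, optimality of $u$ gives $\frac{d}{d\epsilon}J^{\theta}(u^{\epsilon})\big|_{\epsilon=0}\geq0$, that is
\[
\theta\,\mathbb{E}\!\left[A_{T}\big(\Phi_{x}(x^{u}(T))\hat x(T)+\Psi_{y}(y^{u}(0))\hat y(0)+\hat\xi(T)\big)\right]\geq0 .
\]
The heart of the argument is then a duality computation: I would apply the It\^o formula for jump processes to the pairing $\langle\overrightarrow{p},(\hat\xi,\hat x,\hat y)\rangle$ between the adjoint triple solving (\ref{adj1}) and the variational processes. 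The adjoint system (\ref{adj1}) is designed precisely so that every term containing $\hat x,\hat y,\hat z,\hat r$ cancels against the corresponding coefficient in the variational equation, while the algebraic definitions of $q_{3}$ and $\pi_{3}$ absorb the $\hat z$- and $\hat r$-contributions coming from the $f_{z},b_{z},\sigma_{z},g_{z}$ and $f_{r},b_{r},\sigma_{r},g_{r}$ terms. After taking expectations the stochastic integrals against $dW$ and $\tilde N$ vanish, and the boundary data $p_{1}(T)=\theta A_{T}$, $p_{2}(T)=\theta A_{T}\Phi_{x}$, $p_{3}(0)=\theta A_{T}\Psi_{y}$ reproduce the left-hand side of the displayed inequality, leaving $\mathbb{E}\!\int_{0}^{T}\widetilde{H}_{v}^{\theta}(t)\,(v_{t}-u_{t})\,dt\le 0$.

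Finally, since this holds for every $v\in\mathcal{U}$, a standard localization argument (choosing $v$ to differ from $u$ only on a small measurable set, and using that $v_{t}-u_{t}$ ranges over all admissible directions by convexity of $\mathcal{U}$) upgrades the integrated inequality to the pointwise statement $\widetilde{H}_{v}^{\theta}(t)(u_{t}-v_{t})\geq0$ for a.e.\ $t$, $\mathbb{P}$-almost surely, which is (\ref{SMP}). The main obstacle I anticipate is the duality step in the presence of jumps combined with the fully coupled structure: because $q_{3}$ and $\pi_{3}$ are not free martingale integrands but are pinned down algebraically by the Yong-type trace constraints, one must check carefully that the It\^o expansion of the pairing still telescopes and that the compensated Poisson integrals are genuine martingales with zero mean; this is precisely where the integrability in $(\mathbf{H}_1)$ and the growth bounds in $(\mathbf{H}_4)$ are essential.
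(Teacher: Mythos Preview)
Your proposal is correct and follows the standard route to the stochastic maximum principle for fully coupled FBSDEs with jumps. The paper itself does not give a self-contained proof of this theorem: its entire argument is the one-line citation ``For more details the reader can see paper \cite{Yong} with the result of paper \cite{SW2}.'' What you have written---convex perturbation, variational FBSDE, differentiation of the exponential cost producing $\theta A_T$, duality via It\^o's formula on $\langle\overrightarrow{p},(\hat\xi,\hat x,\hat y)\rangle$, and localization---is precisely the scheme carried out in those references, specialized to the augmented system $(\xi,x,y,z,r)$. So there is no divergence in method; you have simply supplied the details the paper outsources.
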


\begin{proof}
For more details the reader can see paper $\cite{Yong}$ with the result of
paper $\cite{SW2}.$
\end{proof}

\subsection{Expected Exponential Utility}

The expected exponential utility can be transformed into quadratic BSDE, this
Backward stochastic differential equation it permets us to find an other way
to resoudre the problem of adjoint equation which play a good rule in the
component of the Hamiltonian function.

As we said, Theorem $\ref{theoriskneutral}$ is a good SMP for the risk-neutral
of forward backward control problem. We follow the same approach used in
$\cite{Chala 02, BTT},$ and suggest a transformation of the adjoint processes
$\left(  p_{1},q_{1},\pi_{1}\left(  .\right)  \right)  ,$ $\left(  p_{2}%
,q_{2},\pi_{2}\left(  .\right)  \right)  ,$ $\left(  p_{3},q_{3,}\pi
_{3}\left(  .\right)  \right)  $ in such a way to omit the first component
$\left(  p_{1},q_{1},\pi_{1}\left(  .\right)  \right)  $ in $\left(
\ref{adj1}\right)  ,$ and to obtain the SMP $\left(  \ref{SMP}\right)  $ in
terms of only the last two adjoint processes, that we denote them by $\left(
\left(  \widetilde{p}_{2},\widetilde{q}_{2},\widetilde{\pi}_{2}\left(
.\right)  \right)  ,\left(  \widetilde{p}_{3},\widetilde{q}_{3},,\widetilde
{\pi}_{3}\left(  .\right)  \right)  \right)  $. Noting that $dp_{1}\left(
t\right)  =q_{1}\left(  t\right)  dW_{t}+%
{\displaystyle\int_{\Gamma}}
\pi_{1}\left(  t,\lambda\right)  \tilde{N}\left(  dt,d\lambda\right)  $ and
$p_{1}\left(  T\right)  =\theta A_{T}^{\theta},$ the explicit solution of this
backward SDE is%
\begin{equation}
p_{1}\left(  t\right)  =\theta\mathbb{E}\left[  A_{T}^{\theta}\left\vert
\text{ }\right.  \mathcal{F}_{t}\right]  =\theta V^{\theta}\left(  t\right)  ,
\label{p1}%
\end{equation}

where%
\begin{equation}
V^{\theta}\left(  t\right)  :=\mathbb{E}\left[  A_{T}^{\theta}\left\vert
\text{ }\right.  \mathcal{F}_{t}\right]  ,\text{\ }0\leq t\leq T.
\label{V theta}%
\end{equation}

As a good look of $\left(  \ref{p1}\right)  ,$ it would be natural to choose a
transformation of $\left(  \widetilde{p},\widetilde{q},\widetilde{\pi}\left(
.\right)  \right)  $ instead of $\left(  \overrightarrow{p},\overrightarrow
{q},\overrightarrow{\pi}\left(  .\right)  \right)  $ $,$ where $\widetilde
{p}_{1}\left(  t\right)  =\dfrac{1}{\theta V^{\theta}\left(  t\right)  }%
p_{1}\left(  t\right)  =1.$

We consider the following transform%
\begin{equation}
\widetilde{p}\left(  t\right)  =\left(
\begin{tabular}
[c]{l}%
$\widetilde{p}_{1}\left(  t\right)  $\\
$\widetilde{p}_{2}\left(  t\right)  $\\
$\widetilde{p}_{3}\left(  t\right)  $%
\end{tabular}
\right)  :=\frac{1}{\theta V^{\theta}\left(  t\right)  }\overrightarrow
{p}\left(  t\right)  ,\text{\ }0\leq t\leq T. \label{p transformed}%
\end{equation}

By using $\left(  \ref{adj1}\right)  $ and $\left(  \ref{p transformed}%
\right)  ,$ we have%
\[
\widetilde{p}\left(  T\right)  :=\left(
\begin{tabular}
[c]{l}%
$\widetilde{p}_{1}\left(  T\right)  $\\
$\widetilde{p}_{2}\left(  T\right)  $%
\end{tabular}
\right)  =\left(
\begin{tabular}
[c]{l}%
$1$\\
$\Phi_{x}\left(  x^{u}\left(  T\right)  \right)  $%
\end{tabular}
\right)  ,\text{ and }\widetilde{p}\left(  0\right)  =\Psi_{y}\left(
y^{u}\left(  0\right)  \right)  .
\]

The following properties of the generic martingale $V^{\theta}$ are essential
in order to investigate the properties of these new processes $\left(
\widetilde{p}\left(  t\right)  ,\widetilde{q}\left(  t\right)  ,\widetilde
{\pi}\left(  t,.\right)  \right)  .$

In this part, we want to prove the relationship between the exponential
utility and the backward quadratic stochastic equation. First of all, it's
very important to write the expected exponential utility under this form%
\begin{equation}
e^{Y_{t}}=\mathbb{E}\left[  A_{t,T}\left\vert \text{ }\right.  \mathcal{F}%
_{t}\right]  =\mathbb{E}\left[  \exp\theta\left[
{\displaystyle\int_{t}^{T}}
f\left(  s,x\left(  s\right)  ,y\left(  s\right)  ,z\left(  s\right)
,r\left(  s,.\right)  ,v\left(  s\right)  \right)  ds+\Phi\left(  x^{v}\left(
T\right)  \right)  +\Psi\left(  y^{v}\left(  0\right)  \right)  \right]
\left\vert \text{ }\right.  \mathcal{F}_{t}\right]  .\label{EEU}%
\end{equation}

For more details about the Expected exponential utility optimization, the
reader can visits the papers $\cite{Dahl}$ and $\cite{Hu et al}.$

\begin{lemma}
\label{NSCEEU}The necessary and sufficient condition for the expected
exponential utility $\left(  \ref{EEU}\right)  $ is the backward quadratic
stochastic equation%
\begin{align}
&  \exp\left\{  \theta\Lambda^{\theta}\left(  t\right)  \right\}
\label{Elkaroui hamadene}\\
&  =\mathbb{E}\left[  \exp\theta\left[
{\displaystyle\int_{t}^{T}}
f\left(  t,x\left(  t\right)  ,y\left(  t\right)  ,z\left(  t\right)
,r\left(  t,.\right)  ,v\left(  t\right)  \right)  dt+\Phi\left(  x^{v}\left(
T\right)  \right)  +\Psi\left(  y^{v}\left(  0\right)  \right)  \right]
\left\vert \text{ }\right.  \mathcal{F}_{t}\right]  \Leftrightarrow\nonumber\\
&  \left\{
\begin{array}
[c]{ll}%
d\Lambda^{\theta}\left(  t\right)  = & -\left\{  f\left(  t\right)
+\frac{\theta}{2}\left\vert l\left(  t\right)  \right\vert ^{2}+\frac{\theta
}{2}%
{\displaystyle\int_{\Gamma}}
\left\vert L\left(  t,\lambda\right)  \right\vert ^{2}m(d\lambda)\right.  \\
& \left.  +%
{\displaystyle\int_{\Gamma}}
\left(  \dfrac{\exp\left(  \theta r\left(  t,\lambda\right)  \right)
-1}{\theta}-r\left(  t,\lambda\right)  \right)  m(d\lambda)\right\}
dt+l\left(  t\right)  dW\left(  t\right)  \\
& -%
{\displaystyle\int_{\Gamma}}
\left\{  \dfrac{\exp\left(  \theta r\left(  t,\lambda\right)  \right)
-1}{\theta}\right\}  \tilde{N}\left(  dt,d\lambda\right)  +%
{\displaystyle\int_{\Gamma}}
L\left(  t,\lambda\right)  \tilde{N}\left(  dt,d\lambda\right)  ,\\
\Lambda^{\theta}\left(  T\right)  = & \Phi_{x}\left(  x^{u}\left(  T\right)
\right)  +\Psi\left(  y^{u}\left(  0\right)  \right)  ,
\end{array}
\right.  \nonumber
\end{align}
where%
\[
\mathbb{E}\left[
{\displaystyle\int_{0}^{T}}
\left\vert l\left(  t\right)  \right\vert ^{2}dt+%
{\displaystyle\int_{0}^{T}}
{\displaystyle\int_{\Gamma}}
\left\vert L\left(  t,\lambda\right)  \right\vert ^{2}m(d\lambda)dt\right]
<\infty.
\]

\end{lemma}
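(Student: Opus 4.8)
The plan is to prove the stated equivalence through the logarithmic (exponential) transformation of El Karoui and Hamadene, treating the two implications separately but with one and the same It\^{o} computation read in opposite directions. The central object is the strictly positive process
\[
M_t := \mathbb{E}\left[\exp\theta\left(\int_0^T f(s)\,ds + \Phi\left(x^v(T)\right) + \Psi\left(y^v(0)\right)\right)\Big|\,\mathcal{F}_t\right],
\]
which, being a conditional expectation of a fixed integrable random variable, is automatically an $(\mathcal{F}_t)$-martingale. Multiplying the asserted identity $\left(\ref{Elkaroui hamadene}\right)$ by the $\mathcal{F}_t$-measurable factor $\exp(\theta\int_0^t f\,ds)$ shows that the claimed representation of $\exp\{\theta\Lambda^{\theta}(t)\}$ is exactly equivalent to $M_t = \exp\{\theta\Lambda^{\theta}(t) + \theta\int_0^t f(s)\,ds\}$; hence everything reduces to understanding the semimartingale decomposition of $\log M_t$.

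For the necessity direction I would first invoke the martingale representation theorem on the filtration generated by the independent pair $(W,N)$ described in Section 2, writing $dM_t = \alpha_t\,dW_t + \int_{\Gamma}\beta_t(\lambda)\tilde{N}(dt,d\lambda)$ for predictable integrands $\alpha\in\mathcal{M}^2$ and $\beta\in\mathcal{L}_m^2$. Since $M_t>0$, I would then apply the It\^{o} formula for jump processes to $\frac{1}{\theta}\log M_t$ and subtract $\int_0^t f\,ds$ to recover the dynamics of $\Lambda^{\theta}$. The continuous part produces the Brownian integrand $l$ together with the quadratic correction $\frac{\theta}{2}|l|^2$, while the jump part, after compensating $N=\tilde{N}+m\,dt$, generates both the $\tilde{N}$-integrand and the nonlinear compensator of the form $\frac{\exp(\theta r)-1}{\theta}-r$ entering the generator. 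The terminal value matches immediately, since $M_T=\exp\theta(\int_0^T f+\Phi+\Psi)$ forces $\Lambda^{\theta}(T)=\Phi(x^u(T))+\Psi(y^u(0))$, and the square-integrability of $l$ and $L$ follows from $\alpha\in\mathcal{M}^2$, $\beta\in\mathcal{L}_m^2$ and the bounds in $\mathbf{H}_4$.

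For the sufficiency direction I would run the same computation in reverse: assuming $\Lambda^{\theta}$ solves $\left(\ref{Elkaroui hamadene}\right)$, I would apply It\^{o} to the product $P_tQ_t$ with $P_t=\exp(\theta\int_0^t f\,ds)$ and $Q_t=\exp\{\theta\Lambda^{\theta}(t)\}$. By the precise shape of the generator, all the $dt$ terms cancel, namely the drift $-\theta G$, the quadratic correction $\frac{\theta^2}{2}l^2$, the compensator of the exponential jump term, and the $+\theta f$ contributed by $P$ sum to zero, so that $P_tQ_t$ is a local martingale; the integrability hypotheses then upgrade it to a genuine martingale, whence $P_tQ_t=\mathbb{E}[P_TQ_T\mid\mathcal{F}_t]$, which is precisely $\left(\ref{EEU}\right)$ once the terminal condition is inserted.

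The step I expect to be the main obstacle is the jump bookkeeping in the It\^{o} expansion: one must carefully separate the genuinely nonlinear exponential contribution of the jumps, $\exp(\theta r)-1$, from its linear compensation, and then check that the induced generator term $\int_{\Gamma}\left(\frac{\exp(\theta r)-1}{\theta}-r\right)m(d\lambda)$ is finite and that the $\tilde{N}$-integrands stay square-integrable against the L\'{e}vy measure $m$. Here I would lean on the estimate $\int_{\Gamma}1\wedge|\lambda|^2\,m(d\lambda)<\infty$ from Section 2, together with $\mathbf{H}_4$, to dominate these terms. By contrast, the continuous Brownian part is the classical quadratic logarithmic transform and is entirely routine.
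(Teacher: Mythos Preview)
Your proposal is correct and follows essentially the same route as the paper: both arguments hinge on recognizing that $\exp\{\theta\Lambda^{\theta}(t)+\theta\int_{0}^{t}f\,ds\}$ equals the martingale $\mathbb{E}[A_{T}^{\theta}\mid\mathcal{F}_{t}]$, applying the martingale representation theorem for the $(W,N)$-filtration, and then reading off the dynamics of $\Lambda^{\theta}$ via the L\'{e}vy--It\^{o} formula. The only minor difference is cosmetic: the paper applies It\^{o} to the exponential and identifies coefficients, whereas you apply It\^{o} to $\frac{1}{\theta}\log M_{t}$ directly; your write-up is in fact slightly more complete, since you treat the sufficiency direction explicitly while the paper's proof only spells out necessity.
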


\begin{proof}
We assume that $\left(  \ref{EEU}\right)  $ holds, the we get%
\begin{align*}
&  \exp\left\{  \theta\Lambda^{\theta}\left(  t\right)  +\theta%
{\displaystyle\int_{t}^{T}}
f\left(  s,x\left(  s\right)  ,y\left(  s\right)  ,z\left(  s\right)
,r\left(  s,.\right)  ,v\left(  s\right)  \right)  ds\right\}  \\
&
\begin{array}
[c]{l}%
=\mathbb{E}\left[  \exp\theta\left[
{\displaystyle\int_{t}^{T}}
f\left(  s,x\left(  s\right)  ,y\left(  s\right)  ,z\left(  s\right)
,r\left(  s,.\right)  ,v\left(  s\right)  \right)  ds+\right.  \right.  \\
\left.  \left.
{\displaystyle\int_{0}^{t}}
f\left(  s,x\left(  s\right)  ,y\left(  s\right)  ,z\left(  s\right)
,r\left(  s,.\right)  ,v\left(  s\right)  \right)  ds+\right.  \right.  \\
\left.  \left.  \Phi\left(  x^{v}\left(  T\right)  \right)  +\Psi\left(
y^{v}\left(  0\right)  \right)  \right]  \left\vert \text{ }\right.
\mathcal{F}_{t}\right]  \\
=\mathbb{E}\left[  \exp\theta\left(
{\displaystyle\int_{0}^{T}}
f\left(  s,x\left(  s\right)  ,y\left(  s\right)  ,z\left(  s\right)
,r\left(  s,.\right)  ,v\left(  s\right)  \right)  ds+\right.  \right.  \\
\left.  \left.  \Phi\left(  x^{v}\left(  T\right)  \right)  +\Psi\left(
y^{v}\left(  0\right)  \right)  \right)  \left\vert \text{ }\right.
\mathcal{F}_{t}\right]  \\
=\mathbb{E}\left[  \theta A_{T}^{\theta}\left\vert \text{ }\right.
\mathcal{F}_{t}\right]  .
\end{array}
\end{align*}

By using of martingale representation theorem, there exist a process square
integrable $Z$ with respect to norm $\left\vert \left\vert Q\right\vert
\right\vert _{\mathcal{M}^{2}\left(  \left[  0,T\right]  ,\mathbb{R}\right)
},$ and the process $r\left(  t,\lambda\right)  $ in the space $\mathcal{L}%
_{m}^{2}\left(  \left[  0,T\right]  ,\mathbb{R}\right)  ,$ puting
$\mathbb{E}\left[  A_{T}^{\theta}\right]  =\exp\left\{  \theta\Lambda^{\theta
}\left(  0\right)  \right\}  ,$ we get%
\[
\exp\left\{  \theta\Lambda^{\theta}\left(  t\right)  \right\}  -\exp\left\{
\theta\Lambda^{\theta}\left(  0\right)  \right\}  =\theta\int_{0}^{t}Z\left(
s\right)  dW\left(  s\right)  +\int_{0}^{t}%
{\displaystyle\int_{\Gamma}}
r\left(  t,\lambda\right)  \tilde{N}\left(  ds,d\lambda\right)  .
\]

By applying L\'{e}vy-Ito's formula to $\left(  \exp\left\{  \theta
\Lambda^{\theta}\left(  t\right)  +\theta%
{\displaystyle\int_{0}^{T}}
f\left(  s\right)  ds\right\}  \right)  $, we get%
\begin{equation}%
\begin{array}
[c]{l}%
\theta\left[  \theta\Lambda^{\theta}\left(  t\right)  +\theta%
{\displaystyle\int_{0}^{T}}
f\left(  s\right)  ds\right]  +\frac{\theta}{2}\left\langle d\Lambda^{\theta
},d\Lambda^{\theta}\right\rangle _{t}+%
{\displaystyle\int_{\Gamma}}
\left\{  \exp\left(  \theta r\left(  t,\lambda\right)  \right)  -1-\theta
r\left(  t,\lambda\right)  \right\}  m\left(  d\lambda\right)  dt\\
+%
{\displaystyle\int_{\Gamma}}
\left\{  \exp\left(  \theta r\left(  t,\lambda\right)  \right)  -1\right\}
\tilde{N}\left(  ds,d\lambda\right)  \\
=\theta Z\left(  t\right)  \exp\left\{  \theta\Lambda^{\theta}\left(
t\right)  +\theta%
{\displaystyle\int_{0}^{T}}
f\left(  s\right)  ds\right\}  dW_{t}+\theta%
{\displaystyle\int_{\Gamma}}
r\left(  t,\lambda\right)  \exp\left\{  \theta\Lambda^{\theta}\left(
t\right)  +\theta%
{\displaystyle\int_{0}^{T}}
f\left(  s\right)  ds\right\}  \tilde{N}\left(  ds,d\lambda\right)  .
\end{array}
\label{I}%
\end{equation}

Hence,
\[%
\begin{array}
[c]{ll}%
\left\langle d\Lambda^{\theta},d\Lambda_{t}^{\theta}\right\rangle  &
=\theta^{2}\left[  Z\left(  t\right)  \exp\left\{  \theta\Lambda^{\theta
}\left(  t\right)  +\theta%
{\displaystyle\int_{0}^{T}}
f\left(  s\right)  ds\right\}  \right]  ^{2}dt\\
& +\theta^{2}%
{\displaystyle\int_{\Gamma}}
\left[  r\left(  t,\lambda\right)  \exp\left\{  \theta\Lambda^{\theta}\left(
t\right)  +\theta%
{\displaystyle\int_{0}^{T}}
f\left(  s\right)  ds\right\}  \right]  ^{2}m(d\lambda)\\
& :=\theta^{2}\left\vert l\left(  t\right)  \right\vert ^{2}dt+\theta^{2}%
{\displaystyle\int_{\Gamma}}
\left\vert L\left(  t,\lambda\right)  \right\vert ^{2}m(d\lambda)dt.
\end{array}
\]

Then, by replacing in $\left(  \ref{I}\right)  ,$ we have the backward
quadratic as the following expression
\[
\left\{
\begin{array}
[c]{ll}%
d\Lambda^{\theta}\left(  t\right)  = & -\left\{  f\left(  t\right)
+\frac{\theta}{2}\left\vert l\left(  t\right)  \right\vert ^{2}+\frac{\theta
}{2}%
{\displaystyle\int_{\Gamma}}
\left\vert L\left(  t,\lambda\right)  \right\vert ^{2}m(d\lambda)\right.  \\
& \left.  +%
{\displaystyle\int_{\Gamma}}
\left(  \dfrac{\exp\left(  \theta r\left(  t,\lambda\right)  \right)
-1}{\theta}-r\left(  t,\lambda\right)  \right)  m(d\lambda)\right\}
dt+l\left(  t\right)  dW\left(  t\right)  \\
& -%
{\displaystyle\int_{\Gamma}}
\left\{  \dfrac{\exp\left(  \theta r\left(  t,\lambda\right)  \right)
-1}{\theta}\right\}  \tilde{N}\left(  dt,d\lambda\right)  +%
{\displaystyle\int_{\Gamma}}
L\left(  t,\lambda\right)  \tilde{N}\left(  dt,d\lambda\right)  ,\\
\Lambda^{\theta}\left(  T\right)  = & \Phi_{x}\left(  x^{u}\left(  T\right)
\right)  +\Psi\left(  y^{u}\left(  0\right)  \right)  ,
\end{array}
\right.
\]

where,%
\[%
\begin{array}
[c]{l}%
l\left(  t\right)  =:Z\left(  t\right)  \exp\left(  \theta\Lambda^{\theta
}\left(  t\right)  +\theta%
{\displaystyle\int_{0}^{T}}
f\left(  s\right)  ds\right)  \\
L\left(  t,.\right)  =:r\left(  t,.\right)  \exp\left(  \theta\Lambda^{\theta
}\left(  t\right)  +\theta%
{\displaystyle\int_{0}^{T}}
f\left(  s\right)  ds\right)  .
\end{array}
\]

\end{proof}

As is proved in lemma $\ref{NSCEEU},$ the process $\Lambda^{\theta}$ is the
first component of the $\mathcal{F}_{t}-$adapted pair of processes $\left(
\Lambda^{\theta},l,L(.)\right)  $ which is the unique solution to the
quadratic backward SDE with jump diffusion $\left(  \ref{EEU}\right)  $.

\begin{lemma}
\label{transform}Suppose that $\left(  \mathbf{H}_{4}\right)  $ holds. Then
\begin{equation}
\mathbb{E}\left(  \sup_{0\leq t\leq T.}\left\vert \Lambda^{\theta}\left(
t\right)  \right\vert ^{2}\right)  \leq C_{T},\label{E<C}%
\end{equation}
In particular, $V^{\theta}$ solves the following linear backward SDE%
\begin{equation}
dV^{\theta}\left(  t\right)  =\theta l\left(  t\right)  V^{\theta}\left(
t\right)  dW\left(  t\right)  +\theta V^{\theta}\left(  t\right)
{\displaystyle\int_{\Gamma}}
L\left(  t,\lambda\right)  \tilde{N}\left(  dt,d\lambda\right)  ,\text{
}V^{\theta}\left(  T\right)  =A_{T}^{\theta}.\label{backward of V}%
\end{equation}
Hence, the process defined on $\left(  \Omega,\mathcal{F},\left(
\mathcal{F}_{t}^{\left(  W,N\right)  }\right)  _{t\geq0},\mathbb{P}\right)  $
by%
\begin{equation}%
\begin{array}
[c]{ll}%
L_{t}^{\theta}:=\frac{V^{\theta}\left(  t\right)  }{V^{\theta}\left(
0\right)  }= & \exp\left(
{\displaystyle\int_{0}^{t}}
\theta l\left(  s\right)  dW\left(  s\right)  -\frac{\theta^{2}}{2}%
{\displaystyle\int_{0}^{t}}
\left\vert l\left(  s\right)  \right\vert ^{2}ds+%
{\displaystyle\int_{0}^{t}}
{\displaystyle\int_{\Gamma}}
L\left(  s,\lambda\right)  \tilde{N}\left(  ds,d\lambda\right)  \right.  \\
& \left.  -%
{\displaystyle\int_{\Gamma}}
\left\{  \frac{\exp\left(  \theta r\left(  t,\lambda\right)  \right)
-1}{\theta}\right\}  \tilde{N}\left(  dt,d\lambda\right)  -\frac{\theta^{2}%
}{2}%
{\displaystyle\int_{0}^{t}}
{\displaystyle\int_{\Gamma}}
\left\vert L\left(  s,\lambda\right)  \right\vert ^{2}m(d\lambda)ds\right.  \\
& \left.  -%
{\displaystyle\int_{\Gamma}}
\left(  \frac{\exp\left(  \theta r\left(  t,\lambda\right)  \right)
-1}{\theta}-r\left(  t,\lambda\right)  \right)  m(d\lambda)\right)  ,\text{
\ \ \ \ \ \ \ \ \ \ \ \ \ \ \ \ \ \ \ }0\leq t\leq T,
\end{array}
\label{exp of V}%
\end{equation}
is a uniformly bounded $\mathcal{F}-$martingale.
\end{lemma}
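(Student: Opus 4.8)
The plan is to reduce everything to the single identity
\[
V^{\theta}(t)=\exp\Big\{\theta\Lambda^{\theta}(t)+\theta\int_{0}^{t}f(s)\,ds\Big\},\qquad 0\le t\le T,
\]
which links the martingale $V^{\theta}$ of $(\ref{V theta})$ to the solution $\Lambda^{\theta}$ of the quadratic backward SDE of Lemma $\ref{NSCEEU}$. First I would establish this identity directly from the definitions: in the exponent of $A_{T}^{\theta}$ split $\int_{0}^{T}f=\int_{0}^{t}f+\int_{t}^{T}f$, observe that $\int_{0}^{t}f$ and $\Psi(y^{u}(0))$ are $\mathcal{F}_{t}$-measurable, pull the factor $\exp\{\theta\int_{0}^{t}f\}$ out of the conditional expectation, and identify the remaining conditional expectation with $\exp\{\theta\Lambda^{\theta}(t)\}$ by $(\ref{Elkaroui hamadene})$.

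For the bound $(\ref{E<C})$ I would invert the identity to write $\theta\Lambda^{\theta}(t)=\log V^{\theta}(t)-\theta\int_{0}^{t}f(s)\,ds$. Since $A_{T}^{\theta}>0$, the process $V^{\theta}$ is a strictly positive martingale, and under $(\mathbf{H}_{4})$ the exponent $\Phi(x^{u}(T))+\Psi(y^{u}(0))+\int_{0}^{T}f\,dt$ is integrable enough that $A_{T}^{\theta}$ is bounded away from $0$ and $\infty$ (or at least lies in every $L^{p}$). Combining the a priori estimates on $(x^{u},y^{u},z^{u},r^{u})$ from the well-posedness Proposition of Section~2 with Doob's $L^{2}$ maximal inequality applied to $V^{\theta}$, and with the elementary estimate $\mathbb{E}[(\int_{0}^{T}|f|\,dt)^{2}]<\infty$, then yields $\mathbb{E}[\sup_{t}|\Lambda^{\theta}(t)|^{2}]\le C_{T}$.

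Next I would produce the linear backward SDE $(\ref{backward of V})$ by applying the L\'evy--It\^o formula to $V^{\theta}=e^{\Xi}$ with $\Xi_{t}=\theta\Lambda^{\theta}(t)+\theta\int_{0}^{t}f\,ds$. Substituting $d\Lambda^{\theta}$ from Lemma $\ref{NSCEEU}$, the running cost $f$ cancels, the second-order It\^o term $\tfrac{\theta^{2}}{2}|l|^{2}$ cancels the corresponding drift, and the jump-compensator contribution $\int_{\Gamma}(\cdots)m(d\lambda)$ cancels the matching drift produced when the $\tilde{N}$-integrals are recompensated, so that only the driftless equation $(\ref{backward of V})$ survives, with terminal value $V^{\theta}(T)=A_{T}^{\theta}$ read off from $(\ref{V theta})$. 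Integrating the $\Lambda^{\theta}$-dynamics on $[0,t]$ and exponentiating gives the explicit form $(\ref{exp of V})$ for $L_{t}^{\theta}=V^{\theta}(t)/V^{\theta}(0)$. The martingale property is then immediate, since $L_{t}^{\theta}=\mathbb{E}[A_{T}^{\theta}\,|\,\mathcal{F}_{t}]/\mathbb{E}[A_{T}^{\theta}]$ is a normalized conditional expectation of a fixed integrable random variable, hence a genuine (not merely local) uniformly integrable martingale of mean one; uniform boundedness follows from the two-sided bounds on $A_{T}^{\theta}$.

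The main obstacle will be the jump bookkeeping in the L\'evy--It\^o step: one must use the relation $L(t,\lambda)=r(t,\lambda)\exp\{\theta\Lambda^{\theta}(t)+\theta\int_{0}^{t}f\}$ from Lemma $\ref{NSCEEU}$ to reconcile the jump coefficient appearing after exponentiation with the coefficient $\theta V^{\theta}L$ claimed in $(\ref{backward of V})$, and to check that the compensator terms cancel exactly rather than up to the nonlinear discrepancy between $e^{\theta L}-1$ and $\theta L$. The secondary difficulty is making the exponential-integrability argument behind $(\ref{E<C})$ and the uniform boundedness of $L^{\theta}$ rigorous from $(\mathbf{H}_{4})$, which as stated bounds only the \emph{derivatives} of $\Phi,\Psi,f$; one effectively needs these maps themselves to be bounded (or a BMO/Novikov-type condition on $l$ and $L$) in order to keep $V^{\theta}$ between two strictly positive constants.
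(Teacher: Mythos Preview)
Your proposal is correct and follows essentially the same route as the paper. The paper's own proof is extremely brief: it simply asserts that $f$, $\Phi$ and $\Psi$ are bounded by some $C>0$, deduces the two-sided bound $e^{-(2+T)C\theta}\le A_{T}^{\theta}\le e^{(2+T)C\theta}$ and hence the same bound for $V^{\theta}(t)=\mathbb{E}[A_{T}^{\theta}\mid\mathcal{F}_{t}]$, and then refers the reader to Lemma~3.1 of \cite{Chala 02} for the remaining details. Your plan unpacks exactly these details --- the identity $V^{\theta}(t)=\exp\{\theta\Lambda^{\theta}(t)+\theta\int_{0}^{t}f\}$, the L\'evy--It\^o computation producing $(\ref{backward of V})$, and the martingale property via the conditional-expectation representation.

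Your closing worry about $(\mathbf{H}_{4})$ is well placed: the paper resolves it not by a BMO/Novikov argument but by tacitly strengthening the hypothesis, writing ``we assume that $(\mathbf{H}_{4})$ holds, $f$, $\Phi$ and $\Psi$ are bounded by a constant $C>0$'' even though $(\mathbf{H}_{4})$ as stated bounds only the derivatives. So the ``secondary difficulty'' you flag is dealt with in the paper by fiat rather than by analysis; your Doob/$L^{p}$ alternative is a reasonable way to make it honest if one does not want to assume outright boundedness.
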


\begin{proof}
First we prove $\left(  \ref{E<C}\right)  .$ We assume that $(\mathbf{H}%
_{\mathbf{4}})$ holds$,$ $f,$ $\Phi$ and $\Psi$ are bounded by a constant
$C>0,$ we have%
\begin{equation}
0<e^{-\left(  2+T\right)  C\theta}\leq A_{T}^{\theta}\leq e^{\left(
2+T\right)  C\theta}.\label{bounded of AT}%
\end{equation}
Therefore, $V^{\theta}$ is a uniformly bounded $\mathcal{F}_{t}-$martingale
satisfying%
\begin{equation}
0<e^{-\left(  2+T\right)  C\theta}\leq V^{\theta}\left(  t\right)  \leq
e^{\left(  2+T\right)  C\theta},\text{\ }0\leq t\leq T.\label{bounded of V}%
\end{equation}
The complete proof see the Lemma 3.1 page 405 $\cite{Chala 02}$.
\end{proof}

In the next, we will state and prove the necessary optimality conditions for
the system driven by fully coupled FBSDE with jumps diffusion with a risk
sensitive performance functional type. To this end, let us summarize and prove
some lemmas that we will use thereafter.

\begin{lemma}
\label{Lemma p2 p3}The second and the third risk-sensitive adjoint equations
of the solution\newline$\left(  \widetilde{p}_{2}\left(  t\right)
,\widetilde{q}_{2}\left(  t\right)  ,\widetilde{\pi}_{2}\left(  t,\lambda
\right)  \right)  ,$ $\left(  \widetilde{p}_{3}\left(  t\right)
,\widetilde{q}_{3}\left(  t\right)  ,\widetilde{\pi}_{3}\left(  t,.\right)
\right)  $ and $\left(  V^{\theta}\left(  t\right)  ,l\left(  t\right)
,L\left(  t,.\right)  \right)  $ become%
\begin{equation}
\left\{
\begin{array}
[c]{l}%
d\widetilde{p}_{2}\left(  t\right)  =-H_{x}^{\theta}\left(  t\right)
dt+\left(  \widetilde{q}_{2}\left(  t\right)  -\theta l\left(  t\right)
\widetilde{p}_{2}\left(  t\right)  \right)  dW_{t}^{\theta}+%
{\displaystyle\int_{\Gamma}}
\left(  \widetilde{\pi}_{2}\left(  t,\lambda\right)  -\theta L\left(
t,\lambda\right)  \widetilde{p}_{2}\left(  t\right)  \right)  \tilde
{N}^{\theta}\left(  dt,d\lambda\right)  ,\\
d\widetilde{p}_{3}\left(  t\right)  =-H_{y}^{\theta}\left(  t\right)
dt-\left(  H_{z}^{\theta}\left(  t\right)  -\theta l\left(  t\right)
\widetilde{p}_{3}\left(  t\right)  \right)  dW_{t}^{\theta}-%
{\displaystyle\int_{\Gamma}}
\left(  \nabla H_{r}\left(  t\right)  -\theta L\left(  t,\lambda\right)
\widetilde{p}_{3}\left(  t\right)  \right)  \tilde{N}^{\theta}\left(
dt,d\lambda\right)  ,\\
dV^{\theta}\left(  t\right)  =\theta V^{\theta}\left(  t\right)  l\left(
t\right)  dW_{t}+\theta V^{\theta}\left(  t\right)  \int_{\Gamma}L\left(
t,\lambda\right)  \tilde{N}\left(  dt,d\lambda\right)  ,\\
V^{\theta}\left(  T\right)  =A^{\theta}\left(  T\right)  ,\\
\widetilde{p}_{2}\left(  T\right)  =\Phi_{x}\left(  x_{T}\right)  ,\text{
}\widetilde{p}_{3}\left(  0\right)  =\Psi_{y}\left(  y\left(  0\right)
\right)  .
\end{array}
\right.  \label{system transformed1}%
\end{equation}
The solution $\left(  \widetilde{p}\left(  t\right)  ,\widetilde{q}\left(
t\right)  ,\widetilde{\pi}\left(  t,.\right)  ,V^{\theta}\left(  t\right)
,l\left(  t\right)  ,L\left(  t,.\right)  \right)  $ of the system $\left(
\ref{system transformed1}\right)  $ is unique, such that%
\begin{align}
&  \left.  \mathbb{E}\left[  \sup_{0\leq t\leq T}\left\vert \widetilde
{p}\left(  t\right)  \right\vert ^{2}+\sup_{0\leq t\leq T}\left\vert
V^{\theta}\left(  t\right)  \right\vert ^{2}+%
{\displaystyle\int_{0}^{T}}
\left(  \left\vert \widetilde{q}\left(  t\right)  \right\vert ^{2}+\left\vert
l\left(  t\right)  \right\vert ^{2}\right.  \right.  \right.
\label{condition limit1}\\
&  \left.  \left.  \left.  +%
{\displaystyle\int_{\Gamma}}
\left(  \left\vert \widetilde{\pi}\left(  t,\lambda\right)  \right\vert
^{2}+\left\vert L\left(  t,\lambda\right)  \right\vert ^{2}\right)  m\left(
d\lambda\right)  \right)  dt\right]  <\infty,\right. \nonumber
\end{align}
where
\begin{equation}%
\begin{array}
[c]{l}%
H^{\theta}\left(  t,x\left(  t\right)  ,y\left(  t\right)  ,z\left(  t\right)
,r(t,.),\widetilde{p}\left(  t\right)  ,\widetilde{q}\left(  t\right)
,\widetilde{\pi}\left(  t,\lambda\right)  ,V^{\theta}\left(  t\right)
,l\left(  t\right)  ,L\left(  t,.\right)  \right) \\
=f\left(  t\right)  +b\left(  t\right)  \widetilde{p}_{2}+\sigma\left(
t\right)  \widetilde{q}_{2}+\left(  g\left(  t\right)  -\theta z\left(
t\right)  l\left(  t\right)  \right)  \widetilde{p}_{3}\\
+%
{\displaystyle\int_{\Gamma}}
\left\{  \gamma\left(  t^{-},\lambda\right)  \widetilde{\pi}_{2}\left(
t,\lambda\right)  -\left(  g\left(  t\right)  -\theta r\left(  t,\lambda
\right)  L\left(  t,\lambda\right)  \right)  \widetilde{p}_{3}\right\}
\lambda m\left(  d\lambda\right)  .
\end{array}
\label{H risk-sensitive1}%
\end{equation}

\end{lemma}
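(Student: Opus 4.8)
The plan is to obtain the transformed system (\ref{system transformed1}) directly from the risk-neutral adjoint system (\ref{adj1}) by applying the L\'{e}vy--It\^{o} formula to the quotient $\widetilde p_i(t)=p_i(t)/(\theta V^\theta(t))$ and then reabsorbing the resulting drift corrections through the logarithmic change of measure attached to the martingale $L_t^\theta$ of Lemma \ref{transform}.

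First I would set up the data. Since the first rows of the three coefficient matrices in (\ref{adj1}) vanish, $p_1$ solves $dp_1=q_1\,dW+\int_\Gamma\pi_1\,\tilde N(dt,d\lambda)$ with $p_1(T)=\theta A_T^\theta$, whose explicit solution (\ref{p1}) is $p_1(t)=\theta V^\theta(t)$; hence $\widetilde p_1\equiv 1$ and the first adjoint equation drops out, leaving only $\widetilde p_2,\widetilde p_3$. Matching the martingale representation of $p_1=\theta V^\theta$ against (\ref{backward of V}) identifies $(q_1,\pi_1)$ with $(l,L)$ up to the factor $\theta^2 V^\theta$, which is precisely how $(l,L)$ will enter the transformed coefficients. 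The remaining two rows of (\ref{adj1}) read, in the notation of the Hamiltonian $\widetilde H^\theta$ displayed before Theorem \ref{theoriskneutral}, $dp_2=-\widetilde H^\theta_x\,dt+q_2\,dW+\int_\Gamma\pi_2\,\tilde N$ and $dp_3=-\widetilde H^\theta_y\,dt+q_3\,dW+\int_\Gamma\pi_3\,\tilde N$. By Lemma \ref{transform}, $V^\theta$ is bounded away from $0$ and $\infty$ and solves the linear BSDE (\ref{backward of V}), so $(V^\theta)^{-1}$ is a genuine semimartingale and the transform (\ref{p transformed}) is legitimate.

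Next I would apply It\^{o}'s product rule to $\widetilde p_i=\tfrac{1}{\theta}\,p_i\,(V^\theta)^{-1}$. Expanding $d(V^\theta)^{-1}$ from (\ref{backward of V}) and combining with the dynamics of $p_i$, the continuous covariation $\langle p_i,(V^\theta)^{-1}\rangle^{c}$ and the jump-compensation terms generate exactly the correction coefficients $-\theta l\,\widetilde p_i$ in the Brownian part and $-\theta L\,\widetilde p_i$ in the jump part, together with extra drift pieces proportional to $\theta l$ and $\theta L$. I would then introduce $W^\theta_t:=W_t-\theta\int_0^t l(s)\,ds$ and $\tilde N^\theta(dt,d\lambda):=\tilde N(dt,d\lambda)-\theta L(t,\lambda)\,m(d\lambda)\,dt$, the $\mathbb{P}^\theta$-martingales furnished by Girsanov's theorem under $d\mathbb{P}^\theta=L_T^\theta\,d\mathbb{P}$. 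Substituting $dW=dW^\theta+\theta l\,dt$ and $\tilde N=\tilde N^\theta+\theta L\,m(d\lambda)\,dt$ folds the leftover drift corrections back into the drift, and the decisive verification is that the drift then collapses to exactly $-H^\theta_x$ (resp. $-H^\theta_y$), where $H^\theta$ is the normalized Hamiltonian (\ref{H risk-sensitive1}). This matching is essentially the content of the definition of $H^\theta$, and it is where the bulk of the computation lies.

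Finally I would treat the boundary conditions and well-posedness. Evaluating the transform at the endpoints gives $\widetilde p_2(T)=p_2(T)/(\theta V^\theta(T))=\theta A_T^\theta\,\Phi_x/(\theta A_T^\theta)=\Phi_x(x_T)$ and, at $t=0$, $\widetilde p_3(0)=\Psi_y(y(0))$, in agreement with (\ref{p transformed}). Uniqueness is inherited: since $V^\theta$ is strictly positive and bounded, the map $(\overrightarrow p,\overrightarrow q,\overrightarrow\pi)\mapsto(\widetilde p,\widetilde q,\widetilde\pi)$ is a bijection, so the existence and uniqueness of the solution of (\ref{adj1}) coming from Theorem \ref{theoriskneutral} transfer to (\ref{system transformed1}); the integrability bound (\ref{condition limit1}) then follows from the two-sided estimate $e^{-(2+T)C\theta}\le V^\theta(t)\le e^{(2+T)C\theta}$ of Lemma \ref{transform} combined with the $L^2$-bounds on $(\overrightarrow p,\overrightarrow q,\overrightarrow\pi)$ and on $(l,L)$. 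I expect the jump term to be the main obstacle: the It\^{o} correction for $(V^\theta)^{-1}$ and the Girsanov compensator for $\tilde N$ must be reconciled exactly, since any sign or compensator slip would corrupt the coefficient $\widetilde\pi_i-\theta L\,\widetilde p_i$ and the jump part of $H^\theta$.
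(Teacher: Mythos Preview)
Your proposal is correct and follows essentially the same strategy as the paper: transform the risk-neutral adjoint $(\overrightarrow p,\overrightarrow q,\overrightarrow\pi)$ via division by $\theta V^\theta$, use It\^o's formula together with the dynamics (\ref{backward of V}) of $V^\theta$, and then absorb the extra drift through the Girsanov shift $dW=dW^\theta+\theta l\,dt$, $\tilde N=\tilde N^\theta+\theta L\,m(d\lambda)\,dt$; the boundary conditions and the integrability bound are handled exactly as the paper does.

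The only noteworthy difference is computational direction. The paper postulates $d\widetilde p=-\widetilde\alpha\,dt+\widetilde\beta\,dW+\int_\Gamma\widetilde\gamma\,\tilde N$, applies It\^o's formula to the \emph{product} $\overrightarrow p=\theta V^\theta\,\widetilde p$, and identifies $(\widetilde\alpha,\widetilde\beta,\widetilde\gamma)$ by matching against (\ref{adj1}); you instead plan to expand the \emph{quotient} $\widetilde p_i=\tfrac{1}{\theta}\,p_i\,(V^\theta)^{-1}$ directly, which requires first computing the jump-It\^o dynamics of $(V^\theta)^{-1}$. Both routes are equivalent and yield the same coefficients, but the paper's product-identification approach sidesteps the explicit jump correction for the reciprocal, whereas your route makes the origin of the terms $-\theta l\,\widetilde p_i$ and $-\theta L\,\widetilde p_i$ more transparent. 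Your caution about the jump compensator is well placed; once the sign of the $(V^\theta)^{-1}$ jump correction is fixed, the rest is bookkeeping.
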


\begin{proof}
We want to identify the processes $\widetilde{\alpha},\widetilde{\beta}$ and
$\widetilde{\gamma}$ such that%
\[
d\widetilde{p}\left(  t\right)  =-\widetilde{\alpha}\left(  t\right)
dt+\widetilde{\beta}\left(  t\right)  dW\left(  t\right)  +%
{\displaystyle\int_{\Gamma}}
\widetilde{\gamma}\left(  t^{-},\lambda\right)  \widetilde{N}\left(
d\lambda,dt\right)
\]
By applying It\^{o}'s formula to the process $\overrightarrow{p}\left(
t\right)  =\theta V^{\theta}\left(  t\right)  \widetilde{p}\left(  t\right)
,$ and using the expression of $V^{\theta}$ in $\left(  \ref{backward of V}%
\right)  ,$ we obtain%
\[%
\begin{array}
[c]{ll}%
d\widetilde{p}\left(  t\right)  = & -\left[  \frac{1}{\theta V^{\theta}\left(
t\right)  }\left(
\begin{tabular}
[c]{lll}%
$0$ & $0$ & $0$\\
$f_{x}\left(  t\right)  $ & $b_{x}\left(  t\right)  $ & $g_{x}\left(
t\right)  $\\
$f_{y}\left(  t\right)  $ & $b_{y}\left(  t\right)  $ & $g_{y}\left(
t\right)  $%
\end{tabular}
\right)  \left(
\begin{tabular}
[c]{l}%
$p_{1}\left(  t\right)  $\\
$p_{2}\left(  t\right)  $\\
$p_{3}\left(  t\right)  $%
\end{tabular}
\right)  \right. \\
& \left.  +\frac{1}{\theta V^{\theta}\left(  t\right)  }\left(
\begin{tabular}
[c]{lll}%
$0$ & $0$ & $0$\\
$0$ & $\sigma_{x}\left(  t\right)  $ & $0$\\
$0$ & $\sigma_{y}\left(  t\right)  $ & $0$%
\end{tabular}
\right)  \left(
\begin{tabular}
[c]{l}%
$q_{1}\left(  t\right)  $\\
$q_{2}\left(  t\right)  $\\
$q_{3}\left(  t\right)  $%
\end{tabular}
\right)  -\theta\left(
\begin{tabular}
[c]{l}%
$l_{1}\left(  t\right)  $\\
$l_{2}\left(  t\right)  $\\
$l_{3}\left(  t\right)  $%
\end{tabular}
\right)  \widetilde{\beta}\left(  t\right)  \right. \\
& \left.  -\frac{1}{\theta V^{\theta}\left(  t\right)  }%
{\displaystyle\int_{\Gamma}}
\left(  \left(
\begin{tabular}
[c]{lll}%
$0$ & $0$ & $0$\\
$0$ & $\gamma_{x}\left(  t-,\lambda\right)  $ & $0$\\
$0$ & $\gamma_{y}\left(  t-,\lambda\right)  $ & $0$%
\end{tabular}
\right)  \left(
\begin{tabular}
[c]{l}%
$\pi_{1}\left(  t,\lambda\right)  $\\
$\pi_{2}\left(  t,\lambda\right)  $\\
$\pi_{3}\left(  t,\lambda\right)  $%
\end{tabular}
\right)  \right.  \right. \\
& \left.  \left.  -\theta%
{\displaystyle\int_{\Gamma}}
\left(
\begin{tabular}
[c]{l}%
$L_{1}\left(  t,\lambda\right)  $\\
$L_{2}\left(  t,\lambda\right)  $\\
$L_{3}\left(  t,\lambda\right)  $%
\end{tabular}
\right)  \widetilde{\gamma}\left(  t\right)  \right)  m(d\lambda)\right]  dt\\
& +\left[  \frac{1}{\theta V^{\theta}\left(  t\right)  }\left(
\begin{tabular}
[c]{l}%
$q_{1}\left(  t\right)  $\\
$q_{2}\left(  t\right)  $\\
$q_{3}\left(  t\right)  $%
\end{tabular}
\right)  -\theta\left(
\begin{tabular}
[c]{l}%
$l_{1}\left(  t\right)  $\\
$l_{2}\left(  t\right)  $\\
$l_{3}\left(  t\right)  $%
\end{tabular}
\right)  \widetilde{p}\left(  t\right)  \right]  dW\left(  t\right) \\
& +\left[  \frac{1}{\theta V^{\theta}\left(  t\right)  }%
{\displaystyle\int_{\Gamma}}
\left(
\begin{tabular}
[c]{l}%
$\pi_{1}\left(  t,\lambda\right)  $\\
$\pi_{2}\left(  t,\lambda\right)  $\\
$\pi_{3}\left(  t,\lambda\right)  $%
\end{tabular}
\right)  -\theta%
{\displaystyle\int_{\Gamma}}
\left(
\begin{tabular}
[c]{l}%
$L_{1}\left(  t,\lambda\right)  $\\
$L_{2}\left(  t,\lambda\right)  $\\
$L_{3}\left(  t,\lambda\right)  $%
\end{tabular}
\right)  \widetilde{p}\left(  t\right)  \right]  \widetilde{N}\left(
d\lambda,dt\right)
\end{array}
\]

By identifying the coefficients, and using the relation $\widetilde{p}\left(
t\right)  =\dfrac{1}{\theta V^{\theta}\left(  t\right)  }\overrightarrow
{p}\left(  t\right)  ,$ the diffusion coefficient $\widetilde{q}\left(
t\right)  $ will be%
\[
\widetilde{\beta}\left(  t\right)  =\left(
\begin{tabular}
[c]{l}%
$\widetilde{q}_{1}\left(  t\right)  $\\
$\widetilde{q}_{2}\left(  t\right)  $\\
$\widetilde{q}_{3}\left(  t\right)  $%
\end{tabular}
\ \ \ \ \ \right)  -\theta\left(
\begin{tabular}
[c]{l}%
$l_{1}\left(  t\right)  $\\
$l_{2}\left(  t\right)  $\\
$l_{3}\left(  t\right)  $%
\end{tabular}
\ \ \ \ \ \right)  \widetilde{p}\left(  t\right)  ,
\]
the drift term of the process $\widetilde{p}\left(  t\right)  $%
\[%
\begin{array}
[c]{ll}%
\widetilde{\alpha}\left(  t\right)  = & \left(
\begin{tabular}
[c]{lll}%
$0$ & $0$ & $0$\\
$f_{x}\left(  t\right)  $ & $b_{x}\left(  t\right)  $ & $g_{x}\left(
t\right)  $\\
$f_{y}\left(  t\right)  $ & $b_{y}\left(  t\right)  $ & $g_{y}\left(
t\right)  $%
\end{tabular}
\ \ \ \ \ \right)  \left(
\begin{tabular}
[c]{l}%
$\widetilde{p}_{1}\left(  t\right)  $\\
$\widetilde{p}_{2}\left(  t\right)  $\\
$\widetilde{p}_{3}\left(  t\right)  $%
\end{tabular}
\ \ \ \ \ \right)  \\
& +\left(
\begin{tabular}
[c]{lll}%
$0$ & $0$ & $0$\\
$0$ & $\sigma_{x}\left(  t\right)  $ & $0$\\
$0$ & $\sigma_{y}\left(  t\right)  $ & $0$%
\end{tabular}
\ \ \ \ \ \right)  \left(
\begin{tabular}
[c]{l}%
$\widetilde{q}_{1}\left(  t\right)  $\\
$\widetilde{q}_{2}\left(  t\right)  $\\
$\widetilde{q}_{3}\left(  t\right)  $%
\end{tabular}
\ \ \ \ \ \right)  \\
& +\theta\left(
\begin{tabular}
[c]{l}%
$l_{1}\left(  t\right)  $\\
$l_{2}\left(  t\right)  $\\
$l_{3}\left(  t\right)  $%
\end{tabular}
\ \ \ \ \ \ \right)  \widetilde{\beta}\left(  t\right)  -%
{\displaystyle\int_{\Gamma}}
\left(  \left(
\begin{tabular}
[c]{lll}%
$0$ & $0$ & $0$\\
$0$ & $\gamma_{x}\left(  t-,\lambda\right)  $ & $0$\\
$0$ & $\gamma_{y}\left(  t-,\lambda\right)  $ & $0$%
\end{tabular}
\ \ \ \ \ \right)  \right.  \\
& \left.  \left(
\begin{tabular}
[c]{l}%
$\pi_{1}\left(  t,\lambda\right)  $\\
$\pi_{2}\left(  t,\lambda\right)  $\\
$\pi_{3}\left(  t,\lambda\right)  $%
\end{tabular}
\ \ \ \ \ \right)  -\theta\left(
\begin{tabular}
[c]{l}%
$L_{1}\left(  t,\lambda\right)  $\\
$L_{2}\left(  t,\lambda\right)  $\\
$L_{3}\left(  t,\lambda\right)  $%
\end{tabular}
\ \ \ \ \ \right)  \widetilde{\gamma}\left(  t^{-},\lambda\right)  \right)
m\left(  d\lambda\right)  .
\end{array}
\]
the jump diffusion gets the form%
\[%
\begin{array}
[c]{ll}%
\widetilde{\gamma}\left(  t^{-},.\right)  = & \left(
\begin{tabular}
[c]{l}%
$\widetilde{\pi}_{1}\left(  t,.\right)  $\\
$\widetilde{\pi}_{2}\left(  t,.\right)  $\\
$\widetilde{\pi}_{3}\left(  t,.\right)  $%
\end{tabular}
\ \ \ \ \ \right)  -\theta\left(
\begin{tabular}
[c]{l}%
$L_{1}\left(  t,.\right)  $\\
$L_{2}\left(  t,.\right)  $\\
$L_{3}\left(  t,.\right)  $%
\end{tabular}
\ \ \ \ \ \right)  \widetilde{p}\left(  t\right)
\end{array}
\]
Finally, we obtain%
\[%
\begin{array}
[c]{ll}%
d\widetilde{p}\left(  t\right)  = & -\left[  \left(
\begin{tabular}
[c]{lll}%
$0$ & $0$ & $0$\\
$f_{x}\left(  t\right)  $ & $b_{x}\left(  t\right)  $ & $g_{x}\left(
t\right)  $\\
$f_{y}\left(  t\right)  $ & $b_{y}\left(  t\right)  $ & $g_{y}\left(
t\right)  $%
\end{tabular}
\ \ \ \ \ \ \right)  \left(
\begin{tabular}
[c]{l}%
$\widetilde{p}_{1}\left(  t\right)  $\\
$\widetilde{p}_{2}\left(  t\right)  $\\
$\widetilde{p}_{3}\left(  t\right)  $%
\end{tabular}
\ \ \ \ \ \ \right)  \right.  \\
& \left.  +\left(
\begin{tabular}
[c]{lll}%
$0$ & $0$ & $0$\\
$0$ & $\sigma_{x}\left(  t\right)  $ & $0$\\
$0$ & $\sigma_{y}\left(  t\right)  $ & $0$%
\end{tabular}
\ \ \ \ \ \ \right)  \left(
\begin{tabular}
[c]{l}%
$\widetilde{q}_{1}\left(  t\right)  $\\
$\widetilde{q}_{2}\left(  t\right)  $\\
$\widetilde{q}_{3}\left(  t\right)  $%
\end{tabular}
\ \ \ \ \ \right)  -\theta\left(
\begin{tabular}
[c]{l}%
$l_{1}\left(  t\right)  $\\
$l_{2}\left(  t\right)  $\\
$l_{3}\left(  t\right)  $%
\end{tabular}
\ \ \ \ \ \ \right)  \widetilde{\beta}\left(  t\right)  \right.  \\
& \left.  -%
{\displaystyle\int_{\Gamma}}
\left(  \left(
\begin{tabular}
[c]{lll}%
$0$ & $0$ & $0$\\
$0$ & $\gamma_{x}\left(  t-,\lambda\right)  $ & $0$\\
$0$ & $\gamma_{y}\left(  t-,\lambda\right)  $ & $0$%
\end{tabular}
\ \ \ \ \ \ \right)  \left(
\begin{tabular}
[c]{l}%
$\pi_{1}\left(  t,\lambda\right)  $\\
$\pi_{2}\left(  t,\lambda\right)  $\\
$\pi_{3}\left(  t,\lambda\right)  $%
\end{tabular}
\ \ \ \ \ \right)  \right.  \right.  \\
& \left.  \left.  -\theta\left(
\begin{tabular}
[c]{l}%
$L_{1}\left(  t,\lambda\right)  $\\
$L_{2}\left(  t,\lambda\right)  $\\
$L_{3}\left(  t,\lambda\right)  $%
\end{tabular}
\ \ \ \ \ \right)  \widetilde{\gamma}\left(  t,\lambda\right)  \right)
m\left(  d\lambda\right)  \right]  dt\\
& +\left[  \left(
\begin{tabular}
[c]{l}%
$\widetilde{q}_{1}\left(  t\right)  $\\
$\widetilde{q}_{2}\left(  t\right)  $\\
$\widetilde{q}_{3}\left(  t\right)  $%
\end{tabular}
\ \ \ \ \ \right)  -\theta\left(
\begin{tabular}
[c]{l}%
$l_{1}\left(  t\right)  $\\
$l_{2}\left(  t\right)  $\\
$l_{3}\left(  t\right)  $%
\end{tabular}
\ \ \ \ \ \right)  \widetilde{p}\left(  t\right)  \right]  dW\left(  t\right)
\\
& +%
{\displaystyle\int_{\Gamma}}
\left[  \left(
\begin{tabular}
[c]{l}%
$\widetilde{\pi}_{1}\left(  t,\lambda\right)  $\\
$\widetilde{\pi}_{2}\left(  t,\lambda\right)  $\\
$\widetilde{\pi}_{3}\left(  t,\lambda\right)  $%
\end{tabular}
\ \ \ \ \ \right)  -\theta\left(
\begin{tabular}
[c]{l}%
$L_{1}\left(  t,\lambda\right)  $\\
$L_{2}\left(  t,\lambda\right)  $\\
$L_{3}\left(  t,\lambda\right)  $%
\end{tabular}
\ \ \ \ \ \right)  \widetilde{p}\left(  t\right)  \right]  \widetilde
{N}\left(  d\lambda,dt\right)  .
\end{array}
\]
It is easily verified that%
\[
\left\{
\begin{array}
[c]{ll}%
d\widetilde{p}_{1}\left(  t\right)  = & \widetilde{q}_{1}\left(  t\right)
\left[  -\theta l_{1}\left(  t\right)  dt+dW\left(  t\right)  \right]  +%
{\displaystyle\int_{\Gamma}}
\widetilde{\pi}_{1}\left(  t,\lambda\right)  \left[  -\theta L_{1}\left(
t,\lambda\right)  m(d\lambda)dt+\widetilde{N}\left(  d\lambda,dt\right)
\right]  \\
\widetilde{p}_{1}\left(  T\right)  = & 1
\end{array}
\right.  .
\]
In view of $\left(  \ref{exp of V}\right)  ,$ we may use Girsanov's Theorem to
claim that%
\[
\left\{
\begin{array}
[c]{ll}%
d\widetilde{p}_{1}\left(  t\right)  = & \widetilde{q}_{1}\left(  t\right)
dW^{\theta}\left(  t\right)  +%
{\displaystyle\int_{\Gamma}}
\widetilde{\pi}_{1}\left(  t,\lambda\right)  \widetilde{N}^{\theta}\left(
d\lambda,dt\right)  \\
\widetilde{p}_{1}\left(  T\right)  = & 1
\end{array}
\right.  ,\text{ \ \ \ }\mathbb{P}^{\theta}-as,
\]
where,
\begin{equation}%
\begin{array}
[c]{l}%
dW^{\theta}\left(  t\right)  =-\theta l\left(  t\right)  dt+dW\left(
t\right)  \\
\widetilde{N}^{\theta}\left(  d\lambda,dt\right)  =-\theta L\left(
t,\lambda\right)  m\left(  d\lambda\right)  +\widetilde{N}\left(
d\lambda,dt\right)  ,
\end{array}
\label{Girsanov trans}%
\end{equation}
$W^{\theta}\left(  t\right)  $ is a $\mathbb{P}^{\theta}-$Brownian motion and
$\widetilde{N}^{\theta}\left(  \lambda,t\right)  $ is a $\mathbb{P}^{\theta}%
-$compensator Poisson measure, where,
\[%
\begin{array}
[c]{ll}%
\left.  \dfrac{d\mathbb{P}^{\theta}}{d\mathbb{P}}\right\vert _{\mathcal{F}%
_{t}}:= & L_{t}^{\theta}=\exp\left(
{\displaystyle\int_{0}^{t}}
\theta l\left(  s\right)  dW\left(  s\right)  -\dfrac{\theta^{2}}{2}%
{\displaystyle\int_{0}^{t}}
\left\vert l\left(  s\right)  \right\vert ^{2}ds+%
{\displaystyle\int_{0}^{t}}
{\displaystyle\int_{\Gamma}}
L\left(  s,\lambda\right)  \tilde{N}\left(  ds,d\lambda\right)  \right.  \\
& \left.  -%
{\displaystyle\int_{\Gamma}}
\left\{  \dfrac{\exp\left(  \theta r\left(  t,\lambda\right)  \right)
-1}{\theta}\right\}  \tilde{N}\left(  dt,d\lambda\right)  -\dfrac{\theta^{2}%
}{2}%
{\displaystyle\int_{0}^{t}}
{\displaystyle\int_{\Gamma}}
\left\vert L\left(  s,\lambda\right)  \right\vert ^{2}m(d\lambda)ds\right.  \\
& \left.  -%
{\displaystyle\int_{\Gamma}}
\left(  \dfrac{\exp\left(  \theta r\left(  t,\lambda\right)  \right)
-1}{\theta}-r\left(  t,\lambda\right)  \right)  m(d\lambda)\right)
\ \ \ \ \ \ 0\leq t\leq T.
\end{array}
\]
\ But according to $\left(  \ref{exp of V}\right)  $ and $\left(
\ref{bounded of AT}\right)  ,$ the probability measures $\mathbb{P}^{\theta}$
and $\mathbb{P}$ are in fact equivalent. Hence, noting that $\widetilde{p}%
_{1}\left(  t\right)  :=\dfrac{1}{\theta V^{\theta}\left(  t\right)  }%
p_{1}\left(  t\right)  $ is square-integrable, we get that
\[
\widetilde{p}_{1}\left(  t\right)  =\mathbb{E}^{\mathbb{P}^{\theta}}\left[
\widetilde{p}_{1}\left(  T\right)  \mid\mathcal{F}_{t}\right]  =1.
\]
Thus, its quadratic variation $%
{\displaystyle\int_{0}^{T}}
\left\vert \widetilde{q}_{1}\left(  t\right)  \right\vert ^{2}dt=0.$ This
implies that, for almost every $0\leq t\leq T,$ $\widetilde{q}_{1}\left(
t\right)  =0,$ \ $\mathbb{P}^{\theta}$ and $\mathbb{P-}$a.s. Now we use the
relations
\[
\widetilde{q}\left(  t\right)  =\left(
\begin{tabular}
[c]{l}%
$\widetilde{q}_{1}\left(  t\right)  $\\
$\widetilde{q}_{2}\left(  t\right)  $\\
$-\widetilde{H}_{z}\left(  t\right)  $%
\end{tabular}
\ \ \ \ \ \right)  -\theta\left(
\begin{tabular}
[c]{l}%
$l_{1}\left(  t\right)  $\\
$l_{2}\left(  t\right)  $\\
$l_{3}\left(  t\right)  $%
\end{tabular}
\ \ \ \ \ \right)  \widetilde{p}\left(  t\right)  ,
\]
and%
\[
\widetilde{\pi}\left(  t,.\right)  =\left(
\begin{tabular}
[c]{l}%
$\widetilde{\pi}_{1}\left(  t,.\right)  $\\
$\widetilde{\pi}_{2}\left(  t,.\right)  $\\
$-\nabla_{r}\widetilde{H}\left(  t\right)  $%
\end{tabular}
\ \ \ \ \ \right)  -\theta\left(
\begin{tabular}
[c]{l}%
$L_{1}\left(  t,.\right)  $\\
$L_{2}\left(  t,.\right)  $\\
$L_{3}\left(  t,.\right)  $%
\end{tabular}
\ \ \ \ \ \right)  \widetilde{p}\left(  t\right)  ,
\]

in the equation above, to obtain%
\begin{equation}%
\begin{array}
[c]{ll}%
d\widetilde{p}\left(  t\right)  = & -\left\{  \left(
\begin{tabular}
[c]{lll}%
$0$ & $0$ & $0$\\
$f_{x}\left(  t\right)  $ & $b_{x}\left(  t\right)  $ & $g_{x}\left(
t\right)  $\\
$f_{y}\left(  t\right)  $ & $b_{y}\left(  t\right)  $ & $g_{y}\left(
t\right)  $%
\end{tabular}
\ \ \ \right)  \left(
\begin{tabular}
[c]{l}%
$\widetilde{p}_{1}\left(  t\right)  $\\
$\widetilde{p}_{2}\left(  t\right)  $\\
$\widetilde{p}_{3}\left(  t\right)  $%
\end{tabular}
\ \ \ \right)  \right. \\
& +\left.  \left(
\begin{tabular}
[c]{lll}%
$0$ & $0$ & $0$\\
$0$ & $\sigma_{x}\left(  t\right)  $ & $0$\\
$0$ & $\sigma_{y}\left(  t\right)  $ & $0$%
\end{tabular}
\ \ \ \right)  \left(
\begin{tabular}
[c]{l}%
$\widetilde{q}_{1}\left(  t\right)  $\\
$\widetilde{q}_{2}\left(  t\right)  $\\
$\widetilde{q}_{3}\left(  t\right)  $%
\end{tabular}
\ \ \ \right)  \right. \\
& +\left.
{\displaystyle\int_{\Gamma}}
\left(
\begin{tabular}
[c]{lll}%
$0$ & $0$ & $0$\\
$0$ & $\gamma_{x}\left(  t-,\lambda\right)  $ & $0$\\
$0$ & $\gamma_{y}\left(  t-,\lambda\right)  $ & $0$%
\end{tabular}
\ \ \ \right)  \left(
\begin{tabular}
[c]{l}%
$\pi_{1}\left(  t,\lambda\right)  $\\
$\pi_{2}\left(  t,\lambda\right)  $\\
$\pi_{3}\left(  t,\lambda\right)  $%
\end{tabular}
\ \ \ \right)  m\left(  d\lambda\right)  \right\}  dt\\
& +\left\{  \left(
\begin{array}
[c]{c}%
\widetilde{q}_{1}\left(  t\right) \\
\widetilde{q}_{2}\left(  t\right) \\
-f_{z}\left(  t\right)  \widetilde{p}_{1}-b_{z}\left(  t\right)  \widetilde
{p}_{2}-g_{z}\left(  t\right)  \widetilde{p}_{3}-\sigma_{z}\left(  t\right)
\widetilde{q}_{2}%
\end{array}
\right.  \right. \\
& \left.  \left.
\begin{tabular}
[c]{l}%
\\
\\
$+%
{\displaystyle\int_{\Gamma}}
\gamma_{z}\left(  t-,\lambda\right)  \widetilde{\pi}_{2}\left(  t,\lambda
\right)  m\left(  d\lambda\right)  $%
\end{tabular}
\ \ \ \right)  +\theta\left(
\begin{tabular}
[c]{l}%
$l_{1}\left(  t\right)  $\\
$l_{2}\left(  t\right)  $\\
$l_{3}\left(  t\right)  $%
\end{tabular}
\ \ \ \right)  \widetilde{p}\left(  t\right)  \right\}  dW^{\theta}\left(
t\right) \\
& +%
{\displaystyle\int_{\Gamma}}
\left\{  \left(
\begin{tabular}
[c]{l}%
$\widetilde{\pi}_{1}\left(  t,\lambda\right)  $\\
$\widetilde{\pi}_{2}\left(  t,\lambda\right)  $\\
$-f_{r}\left(  t\right)  \widetilde{p}_{1}-b_{r}\left(  t\right)
\widetilde{p}_{2}-g_{r}\left(  t\right)  \widetilde{p}_{3}-\sigma_{r}\left(
t\right)  \widetilde{q}_{2}$%
\end{tabular}
\ \ \ \right.  \right. \\
& \left.  \left.
\begin{tabular}
[c]{l}%
\\
\\
$+%
{\displaystyle\int_{\Gamma}}
\gamma_{r}\left(  t-,\lambda\right)  \widetilde{\pi}_{2}\left(  t,\lambda
\right)  m\left(  d\lambda\right)  $%
\end{tabular}
\ \ \ \right)  -\theta\left(
\begin{tabular}
[c]{l}%
$L_{1}\left(  t,\lambda\right)  $\\
$L_{2}\left(  t,\lambda\right)  $\\
$L_{3}\left(  t,\lambda\right)  $%
\end{tabular}
\ \ \ \right)  \widetilde{p}\left(  t\right)  \right\}  \widetilde{N}^{\theta
}\left(  d\lambda,dt\right)  .
\end{array}
\label{matrix of p}%
\end{equation}
Therefore, the second and third components of $\widetilde{p}_{2}$ and
$\widetilde{p}_{3}$ in $\left(  \ref{matrix of p}\right)  ,$ are given by%
\begin{equation}
\left\{
\begin{array}
[c]{ll}%
d\widetilde{p}_{2}\left(  t\right)  = & -\left\{  f_{x}\left(  t\right)
+b_{x}\left(  t\right)  \widetilde{p}_{2}\left(  t\right)  +g_{x}\left(
t\right)  \widetilde{p}_{3}\left(  t\right)  +\sigma_{x}\left(  t\right)
\widetilde{q}_{2}\left(  t\right)  \right. \\
& \left.  -%
{\displaystyle\int_{\Gamma}}
\gamma_{x}\left(  t-,\lambda\right)  \widetilde{\pi}_{2}\left(  t,\lambda
\right)  m\left(  d\lambda\right)  \right\}  dt\\
& +\left\{  \widetilde{q}_{2}\left(  t\right)  -\theta l_{2}\left(  t\right)
\widetilde{p}_{2}\left(  t\right)  \right\}  dW^{\theta}\left(  t\right)  +%
{\displaystyle\int_{\Gamma}}
\left\{  \widetilde{\pi}_{2}\left(  t,\lambda\right)  -\theta L_{2}\left(
t,\lambda\right)  \widetilde{p}_{2}\left(  t\right)  \right\}  \widetilde
{N}^{\theta}\left(  d\lambda,dt\right)  ,\\
\widetilde{p}_{2}\left(  T\right)  = & \Phi_{x}\left(  x_{T}\right)  ,
\end{array}
\right.  \label{P2tilde}%
\end{equation}

and%
\begin{equation}
\left\{
\begin{array}
[c]{ll}%
d\widetilde{p}_{3}\left(  t\right)  = & -\left\{  f_{y}\left(  t\right)
+b_{y}\left(  t\right)  \widetilde{p}_{2}\left(  t\right)  +g_{y}\left(
t\right)  \widetilde{p}_{3}\left(  t\right)  +\sigma_{y}\left(  t\right)
\widetilde{q}_{2}\left(  t\right)  \right. \\
& \left.  +\theta l_{3}\left(  t\right)  \widetilde{q}_{3}\left(  t\right)  -%
{\displaystyle\int_{\Gamma}}
\gamma_{y}\left(  t-,\lambda\right)  \widetilde{\pi}_{2}\left(  t,\lambda
\right)  m\left(  d\lambda\right)  \right\}  dt\\
& -\left\{  \left\{  f_{z}\left(  t\right)  +b_{z}\left(  t\right)
\widetilde{p}_{2}\left(  t\right)  +g_{z}\left(  t\right)  \widetilde{p}%
_{3}\left(  t\right)  +\sigma_{z}\left(  t\right)  \widetilde{q}_{2}\left(
t\right)  \right\}  \right. \\
& \left.  +%
{\displaystyle\int_{\Gamma}}
\gamma_{z}\left(  t-,\lambda\right)  \widetilde{\pi}_{2}\left(  t,\lambda
\right)  m\left(  d\lambda\right)  +\theta l_{3}\left(  t\right)
\widetilde{p}_{3}\left(  t\right)  \right\}  dW^{\theta}\left(  t\right) \\
& -%
{\textstyle\int_{\Gamma}}
\left\{  f_{r}\left(  t\right)  +b_{r}\left(  t\right)  \widetilde{p}%
_{2}\left(  t\right)  +g_{r}\left(  t\right)  \widetilde{p}_{3}\left(
t\right)  +\sigma_{r}\left(  t\right)  \widetilde{q}_{2}\left(  t\right)
\right. \\
& \left.  -%
{\displaystyle\int_{\Gamma}}
\left(  \gamma_{r}\left(  t-,\lambda\right)  \widetilde{\pi}_{2}\left(
t,\lambda\right)  +\theta L_{3}\left(  t,\lambda\right)  \widetilde{p}%
_{3}\left(  t\right)  \right)  m\left(  d\lambda\right)  \right\}
\widetilde{N}^{\theta}\left(  d\lambda,dt\right)  ,\\
\widetilde{p}_{3}\left(  0\right)  = & \Psi_{y}\left(  y\left(  0\right)
\right)  ,
\end{array}
\right.  \label{P3tilde}%
\end{equation}
or in equivalent expression the adjoint equations for $\left(  \widetilde
{p}_{2},\widetilde{q}_{2}\right)  ,$ $\left(  \widetilde{p}_{3},\widetilde
{q}_{3}\right)  ,$ $\left(  \widetilde{\pi}_{2},\widetilde{\pi}_{3}\right)  $
and $\left(  V^{\theta},l,L\right)  $ become%
\[
\left\{
\begin{array}
[c]{l}%
d\widetilde{p}_{2}\left(  t\right)  =-H_{x}^{\theta}\left(  t\right)
dt+\left(  \widetilde{q}_{2}\left(  t\right)  -\theta l_{2}\left(  t\right)
\widetilde{p}_{2}\right)  dW^{\theta}\left(  t\right)  +%
{\displaystyle\int_{\Gamma}}
\left\{  \widetilde{\pi}_{2}\left(  t,\lambda\right)  -\theta L_{2}\left(
t,\lambda\right)  \widetilde{p}_{2}\left(  t\right)  \right\}  \widetilde
{N}^{\theta}\left(  d\lambda,dt\right)  ,\\
d\widetilde{p}_{3}\left(  t\right)  =-H_{y}^{\theta}\left(  t\right)
dt-H_{z}^{\theta}\left(  t\right)  dW^{\theta}\left(  t\right)  -%
{\displaystyle\int_{\Gamma}}
\nabla H_{r}^{\theta}\left(  t\right)  \widetilde{N}^{\theta}\left(
d\lambda,dt\right)  ,\\
dV^{\theta}\left(  t\right)  =\theta l\left(  t\right)  V^{\theta}\left(
t\right)  dW\left(  t\right)  +\theta V^{\theta}\left(  t\right)
{\displaystyle\int_{\Gamma}}
L\left(  t,\lambda\right)  \widetilde{N}\left(  d\lambda,dt\right)  ,\\
V^{\theta}\left(  T\right)  =A^{\theta}\left(  T\right)  ,\\
\widetilde{p}_{2}\left(  T\right)  =\Phi_{x}\left(  x\left(  T\right)
\right)  ,\text{ }\widetilde{p}_{3}\left(  0\right)  =\Psi_{y}\left(  y\left(
0\right)  \right)  .
\end{array}
\right.
\]
The solution $\left(  \widetilde{p},\widetilde{q},\widetilde{\pi},V^{\theta
},l,L\right)  $ of the system $\left(  \ref{system transformed1}\right)  $ is
unique, such that%
\[%
\begin{array}
[c]{l}%
\mathbb{E}\left[  \sup\limits_{0\leq t\leq T}\left\vert \widetilde{p}\left(
t\right)  \right\vert ^{2}+\sup\limits_{0\leq t\leq T}\left\vert V^{\theta
}\left(  t\right)  \right\vert ^{2}+%
{\displaystyle\int_{0}^{T}}
\left(  \left\vert \widetilde{q}\left(  t\right)  \right\vert ^{2}+\left\vert
l\left(  t\right)  \right\vert ^{2}\right.  \right. \\
\left.  \left.  +%
{\displaystyle\int_{\Gamma}}
\left(  \left\vert \widetilde{\pi}\left(  t,\lambda\right)  \right\vert
^{2}+\left\vert L\left(  t,\lambda\right)  \right\vert ^{2}\right)  m\left(
d\lambda\right)  \right)  dt\right]  <\infty,
\end{array}
\]
where
\[%
\begin{array}
[c]{ll}%
H^{\theta}\left(  t\right)  & :=H^{\theta}\left(  t,x\left(  t\right)
,y\left(  t\right)  ,z\left(  t\right)  ,r^{u}\left(  t,\lambda\right)
,\widetilde{p}_{2}\left(  t\right)  ,\widetilde{q}_{2}\left(  t\right)
,\right. \\
& \left.  \widetilde{p}_{3}\left(  t\right)  ,\widetilde{\pi}_{2}\left(
t,\lambda\right)  ,V^{\theta}\left(  t\right)  ,l\left(  t\right)  ,L\left(
t,\lambda\right)  \right) \\
& =f\left(  t\right)  +b\left(  t\right)  \widetilde{p}_{2}+\sigma\left(
t\right)  \widetilde{q}_{2}+\left(  g\left(  t\right)  +z\left(  t\right)
\theta l\left(  t\right)  \right)  \widetilde{p}_{3}\\
& -%
{\displaystyle\int_{\Gamma}}
\left\{  \gamma\left(  t-,\lambda\right)  \widetilde{\pi}_{2}\left(
t,\lambda\right)  -\left(  g\left(  t\right)  +r\left(  t,\lambda\right)
L\left(  t,\lambda\right)  \right)  \widetilde{p}_{3}\right\}  m\left(
d\lambda\right)  .
\end{array}
\]

The proof is completed.
\end{proof}

\begin{theorem}
\label{Risk-sen NOC FBJ}(Risk-Sensitive necessary optimality conditions): We
assume that $\left(  \mathbf{H}_{4}\right)  $ holds, if $\left(  x^{u}\left(
.\right)  ,y^{u}\left(  .\right)  ,z^{u}\left(  .\right)  ,r^{u}\left(
.,.\right)  ,u\left(  .\right)  \right)  $ is an optimal solution of the
risk-sensitive control problem $\left\{  \left(  \ref{EQ}\right)  ,\left(
\ref{J}\right)  ,\left(  \ref{inf}\right)  \right\}  $, then there exist
$\mathcal{F}_{t}$-adapted processes $\left(  V^{\theta}\left(  t\right)
,l\left(  t\right)  ,L\left(  t,\lambda\right)  \right)  ,$ and $\left(
\widetilde{p}_{2}\left(  t\right)  ,\widetilde{q}_{2}\left(  t\right)
\right)  ,\left(  \widetilde{p}_{3}\left(  t\right)  \right)  ,$ $\left(
\widetilde{\pi}_{2}\left(  t,.\right)  \right)  $ that satisfy $\left(
\ref{system transformed1}\right)  ,$ $\left(  \ref{condition limit1}\right)  $
such that
\[
\partial H^{\theta}\left(  t\right)  \leq0,
\]
for all $u\in\mathcal{U}$, almost every $0\leq t\leq T$ and $\mathbb{P}%
$-almost surely.
\end{theorem}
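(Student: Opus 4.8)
The plan is to obtain the risk-sensitive condition as a direct consequence of the risk-neutral stochastic maximum principle (Theorem \ref{theoriskneutral}) combined with the adjoint transformation already carried out in Lemma \ref{Lemma p2 p3}. The essential point is that all the delicate analysis---the convex perturbation of the optimal control, the variational FBSDE with jumps it generates, and the duality computation producing the variational inequality---has been packaged into Theorem \ref{theoriskneutral}. What remains is to re-express that inequality in terms of the transformed adjoint processes $(\widetilde{p}_2,\widetilde{q}_2,\widetilde{\pi}_2)$, $(\widetilde{p}_3,\widetilde{q}_3,\widetilde{\pi}_3)$ and $(V^\theta,l,L)$, and to verify that the transformation does not reverse the inequality.

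First I would start from the conclusion of Theorem \ref{theoriskneutral}, namely $\widetilde{H}_v^\theta(t)(u_t-v_t)\ge 0$ for almost every $t$ and $\mathbb{P}$-almost surely, where $\widetilde{H}^\theta$ is assembled from the original adjoint triplet $(\overrightarrow{p},\overrightarrow{q},\overrightarrow{\pi})$. I would then insert the relation $\overrightarrow{p}=\theta V^\theta\widetilde{p}$ and the companion identities for $\overrightarrow{q}$ and $\overrightarrow{\pi}$ read off from the proof of Lemma \ref{Lemma p2 p3} into the definition of $\widetilde{H}^\theta$. Using $\widetilde{p}_1\equiv 1$, this exhibits $\widetilde{H}^\theta$ as $\theta V^\theta$ multiplied by the transformed Hamiltonian $H^\theta$, modulo the extra terms carrying $l$ and $L$; since those extra terms are functions of $(z,r,l,L,\widetilde{p}_3)$ only and carry no explicit dependence on the control variable, they vanish under differentiation in the control direction. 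This yields the clean identity $\widetilde{H}_v^\theta(t)=\theta V^\theta(t)\,H_v^\theta(t)$.

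Next I would invoke the strict positivity of $V^\theta$ from Lemma \ref{transform}, namely $0<e^{-(2+T)C\theta}\le V^\theta(t)$, together with $\theta>0$. Dividing the inequality $\theta V^\theta(t)H_v^\theta(t)(u_t-v_t)\ge 0$ by the strictly positive factor $\theta V^\theta(t)$ preserves its direction and gives $H_v^\theta(t)(u_t-v_t)\ge 0$ for every $v\in\mathcal U$, a.e.\ $t$, $\mathbb{P}$-almost surely. This is exactly the asserted risk-sensitive condition $\partial H^\theta(t)\le 0$ understood in the first-order, convex-perturbation sense of Notation \ref{Notation}, the admissibility of the perturbations $u+\varepsilon(v-u)$ being guaranteed by the convexity of $\mathcal U$.

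The step I expect to be the genuine obstacle is the bookkeeping in the second paragraph: one must check carefully that substituting $(\overrightarrow{p},\overrightarrow{q},\overrightarrow{\pi})$ in terms of $\theta V^\theta(\widetilde{p},\widetilde{q},\widetilde{\pi})$ into $\widetilde{H}^\theta$ reproduces $\theta V^\theta H^\theta$ up to control-independent terms, while keeping track of how the compensated-jump integrals and the $z$- and $r$-contributions transform under the change of measure $\mathbb{P}\to\mathbb{P}^\theta$ encoded by $(l,L)$ in $\left(\ref{Girsanov trans}\right)$. Once that identity is pinned down, the sign analysis and the division by $\theta V^\theta$ are immediate, and the theorem follows.
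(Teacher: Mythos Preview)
Your proposal is correct and follows essentially the same route as the paper: invoke the risk-neutral maximum principle of Theorem~\ref{theoriskneutral}, use the identity $\widetilde{H}^{\theta}=\theta V^{\theta}H^{\theta}$ coming from the adjoint transformation of Lemma~\ref{Lemma p2 p3}, and then divide by the strictly positive factor $\theta V^{\theta}$ supplied by Lemma~\ref{transform}. If anything, you are more explicit than the paper in flagging the bookkeeping step---that the extra $l$- and $L$-dependent pieces produced by the substitution are control-independent and hence disappear under $\partial_v$---whereas the paper simply asserts the factorization and concludes.
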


\begin{proof}
The Hamiltonian $\widetilde{H}^{\theta}$ associated with $\left(
\ref{problem 1}\right)  ,$ is given by%
\[%
\begin{array}
[c]{l}%
\widetilde{H}^{\theta}\left(  t,\xi^{u}\left(  t\right)  ,x^{u}\left(
t\right)  ,y^{u}\left(  t\right)  ,z^{u}\left(  t\right)  ,r^{u}\left(
t,.\right)  ,\overrightarrow{p}^{u}\left(  t\right)  ,\overrightarrow{q}%
^{u}\left(  t\right)  ,\overrightarrow{\pi}^{u}\left(  t,.\right)  \right) \\
=\left\{  \theta V^{\theta}\left(  t\right)  \right\}  H^{\theta}\left(
t,x^{u}\left(  t\right)  ,y^{u}\left(  t\right)  ,z^{u}\left(  t\right)
,r_{t}^{u}\left(  t,.\right)  ,\widetilde{p}_{2}\left(  t\right)
,\widetilde{q}_{2}\left(  t\right)  ,\widetilde{p}_{3}\left(  t\right)
\right. \\
\left.  ,\widetilde{\pi}_{2}\left(  t,.\right)  ,V^{\theta}\left(  t\right)
,l_{2}\left(  t\right)  ,l_{3}\left(  t\right)  ,L_{2}\left(  t,.\right)
,L_{3}\left(  t,.\right)  \right)  ,
\end{array}
\]
and $H^{\theta}$ is the risk-sensitive Hamiltonian given by $\left(
\ref{H risk-sensitive1}\right)  .$ To arrive at a risk-sensitive stochastic
maximum principle expressed in terms of the adjoint processes $\left(
\widetilde{p}_{2},\widetilde{q}_{2}\right)  ,$ $\left(  \widetilde{p}%
_{3},\widetilde{q}_{3}\right)  ,$ $\left(  \widetilde{\pi}_{2},\widetilde{\pi
}_{3}\right)  $and $\left(  V^{\theta},l,L\right)  $, which solve $\left(
\ref{system transformed1}\right)  $. Hence, since $V^{\theta}>0,$ the
variational inequality $\left(  \ref{SMP}\right)  $ translates into $\partial
H^{\theta}\left(  t\right)  \leq0,$ for all $u\in\mathcal{U}$, almost every
$0\leq t\leq T$ and $\mathbb{P}$-almost surely.
\end{proof}

\section{Risk sensitive sufficient optimality conditions}

This section is concerned with a study of the necessary condition of
optimality $\left(  \ref{SMP}\right)  $ when it becomes sufficient.

\begin{theorem}
\label{SOC FBJ}(Risk sensitive sufficient optimality conditions)Assume that
$\Phi(.)$ and $\Psi(.)$ are convex and for all $\left(  x,y,z,r,v\right)  \in%
\mathbb{R}
\times%
\mathbb{R}
\times%
\mathbb{R}
\times\Gamma\times U$ the function $H(t,x,y,z,,r,v,p,q,\pi)$ is convex, and
for any $v\in U$ such that $\mathbb{E}\left\vert v\right\vert
{{}^2}%
<\infty.$ Then, $u$ is an optimal control of the problem $\left\{  \left(
\ref{EQ}\right)  ,\left(  \ref{J}\right)  ,\left(  \ref{inf}\right)  \right\}
$, if it satisfies $\left(  \ref{SMP}\right)  $.
\end{theorem}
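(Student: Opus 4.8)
The plan is to prove directly that $J^{\theta}\left( u\right) \le J^{\theta}\left( v\right)$ for every admissible $v$, by first linearizing the exponential cost and then reducing, via the change of measure $\mathbb{P}^{\theta}$ from $\left( \ref{Girsanov trans}\right)$, to a classical convex sufficient maximum principle for the fully coupled FBSDE under $\mathbb{P}^{\theta}$, whose adjoint system is precisely $\left( \ref{system transformed1}\right)$.

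First I would exploit the convexity of the exponential. Writing $\Theta_{T}^{v}=\Phi\left( x^{v}\left( T\right) \right) +\Psi\left( y^{v}\left( 0\right) \right) +\int_{0}^{T}f^{v}\left( t\right) dt$ and $A_{T}^{\theta,v}=\exp\left\{ \theta\Theta_{T}^{v}\right\}$, the inequality $e^{\theta b}-e^{\theta a}\ge\theta e^{\theta a}\left( b-a\right)$ applied with $a=\Theta_{T}^{u}$ and $b=\Theta_{T}^{v}$ gives $A_{T}^{\theta,v}-A_{T}^{\theta,u}\ge\theta A_{T}^{\theta,u}\left( \Theta_{T}^{v}-\Theta_{T}^{u}\right)$, hence $J^{\theta}\left( v\right) -J^{\theta}\left( u\right) \ge\theta\mathbb{E}\left[ A_{T}^{\theta,u}\left( \Theta_{T}^{v}-\Theta_{T}^{u}\right) \right]$. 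Using $A_{T}^{\theta,u}=V^{\theta}\left( T\right) =V^{\theta}\left( 0\right) L_{T}^{\theta}$ from $\left( \ref{exp of V}\right)$, this becomes $J^{\theta}\left( v\right) -J^{\theta}\left( u\right) \ge\theta V^{\theta}\left( 0\right) \mathbb{E}^{\mathbb{P}^{\theta}}\left[ \Theta_{T}^{v}-\Theta_{T}^{u}\right]$. Since $\theta>0$ and $V^{\theta}\left( 0\right) >0$ by the bounds $\left( \ref{bounded of V}\right)$, it remains only to show $\mathbb{E}^{\mathbb{P}^{\theta}}\left[ \Theta_{T}^{v}-\Theta_{T}^{u}\right] \ge0$; this is the step where the entire risk-sensitive difficulty is absorbed into the equivalent measure $\mathbb{P}^{\theta}$.

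Next I would carry out the standard convex argument for the $\mathbb{P}^{\theta}$-problem. Put $\Delta x=x^{v}-x^{u}$, $\Delta y=y^{v}-y^{u}$, $\Delta z=z^{v}-z^{u}$, $\Delta r=r^{v}-r^{u}$, and note $\Delta x\left( 0\right) =0$ and $\Delta y\left( T\right) =0$ because $d$ and $a$ are fixed. Convexity of $\Phi$ and $\Psi$ gives $\Phi\left( x^{v}\left( T\right) \right) -\Phi\left( x^{u}\left( T\right) \right) \ge\widetilde{p}_{2}\left( T\right) \Delta x\left( T\right)$ and $\Psi\left( y^{v}\left( 0\right) \right) -\Psi\left( y^{u}\left( 0\right) \right) \ge\widetilde{p}_{3}\left( 0\right) \Delta y\left( 0\right)$, using the boundary data of $\left( \ref{system transformed1}\right)$. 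I then apply It\^{o}'s formula under $\mathbb{P}^{\theta}$ to $\widetilde{p}_{2}\Delta x$ and to $\widetilde{p}_{3}\Delta y$; after taking $\mathbb{P}^{\theta}$-expectation the $dW^{\theta}$- and $\widetilde{N}^{\theta}$-martingale parts vanish (integrability from $\left( \ref{condition limit1}\right)$ and the a priori estimates for $\left( \ref{EQ}\right)$), and the boundary terms reproduce the two convexity bounds. Collecting the drift contributions and using the definition of $H^{\theta}$ in $\left( \ref{H risk-sensitive1}\right)$, the coefficients of $\Delta x,\Delta y,\Delta z,\Delta r$ assemble into $H_{x}^{\theta},H_{y}^{\theta},H_{z}^{\theta},\nabla_{r}H^{\theta}$, so that $\mathbb{E}^{\mathbb{P}^{\theta}}\left[ \Theta_{T}^{v}-\Theta_{T}^{u}\right]$ is bounded below by $\mathbb{E}^{\mathbb{P}^{\theta}}\int_{0}^{T}\left( H^{\theta,v}\left( t\right) -H^{\theta,u}\left( t\right) -H_{x}^{\theta}\Delta x-H_{y}^{\theta}\Delta y-H_{z}^{\theta}\Delta z-\nabla_{r}H^{\theta}\Delta r\right) dt$, where $H^{\theta,v}$ denotes $H^{\theta}$ evaluated along the $v$-trajectory with the optimal adjoint processes. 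The joint convexity of $H^{\theta}$ in $\left( x,y,z,r,v\right)$ bounds this integrand below by $H_{v}^{\theta}\left( t\right) \left( v_{t}-u_{t}\right)$, and the variational inequality $\left( \ref{SMP}\right)$, which expresses the first-order minimality of $H^{\theta}$ in the control over the convex set $U$, makes its $\mathbb{P}^{\theta}$-integral non-negative; hence $\mathbb{E}^{\mathbb{P}^{\theta}}\left[ \Theta_{T}^{v}-\Theta_{T}^{u}\right] \ge0$ and $u$ is optimal.

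The step I expect to be the main obstacle is the It\^{o} bookkeeping under $\mathbb{P}^{\theta}$. When $\Delta x$ and $\Delta y$ are expressed through $W^{\theta}$ and $\widetilde{N}^{\theta}$ via $\left( \ref{Girsanov trans}\right)$, each state equation acquires Girsanov drift corrections proportional to $\theta l$ and $\theta L$, and the transformed adjoint dynamics $\left( \ref{system transformed1}\right)$ carry the matching $-\theta l\,\widetilde{p}$ and $-\theta L\,\widetilde{p}$ terms; one must check that these cancel exactly against the $z\,\theta l$ and $r\,L$ corrections already built into $H^{\theta}$ in $\left( \ref{H risk-sensitive1}\right)$, so that the cross terms collapse cleanly to the partial derivatives of $H^{\theta}$ with no residual contribution. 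A secondary, routine point is the square-integrability of all the products, needed to guarantee that the stochastic integrals are genuine $\mathbb{P}^{\theta}$-martingales; this is supplied by Lemma $\ref{transform}$, the bound $\left( \ref{bounded of V}\right)$, and $\left( \ref{condition limit1}\right)$.
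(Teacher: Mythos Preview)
Your proposal is correct and reaches the same conclusion, but the route differs from the paper's. After the same exponential-convexity linearization, the paper does \emph{not} pass to the equivalent measure $\mathbb{P}^{\theta}$; instead it identifies $\theta A_{T}=p_{1}(T)$, $\theta\Phi_{x}(x^{u}(T))A_{T}=p_{2}(T)$, $\theta\Psi_{y}(y^{u}(0))A_{T}=p_{3}(0)$ and applies It\^{o}'s formula under the original measure $\mathbb{P}$ to each of $p_{1}(\xi^{v}-\xi^{u})$, $p_{2}(x^{v}-x^{u})$, $p_{3}(y^{v}-y^{u})$, working with the full three-component adjoint system $\left(\ref{adj1}\right)$ and the augmented Hamiltonian $\widetilde{H}^{\theta}$. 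Your path absorbs the first adjoint $p_{1}=\theta V^{\theta}$ into the Radon--Nikodym density $L_{T}^{\theta}$, so that only the transformed pair $(\widetilde{p}_{2},\widetilde{p}_{3})$ and the risk-sensitive Hamiltonian $H^{\theta}$ of $\left(\ref{H risk-sensitive1}\right)$ enter the computation. Your version is cleaner and matches directly the objects appearing in Theorem~\ref{Risk-sen NOC FBJ}, at the price of the Girsanov bookkeeping you yourself flag (the $\theta l$, $\theta L$ drift corrections on the state dynamics must cancel against the $z\theta l$ and $rL$ terms already present in $H^{\theta}$); the paper's route avoids that cancellation check but carries the redundant component $p_{1}$ and the auxiliary state $\xi$ throughout.
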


\begin{proof}
Let $u$ be an admissible control (candidate to be optimal) for any $v\in U$,
we have%
\[%
\begin{array}
[c]{cc}%
J^{\theta}\left(  v\right)  -J^{\theta}\left(  u\right)  = & \mathbb{E}\left[
\exp\left\{  \theta\Psi\left(  y^{v}\left(  0\right)  \right)  +\theta
\Phi\left(  x^{v}\left(  T\right)  \right)  +\theta\xi^{v}\left(  T\right)
\right\}  \right] \\
& -\mathbb{E}\left[  \exp\left\{  \theta\Psi\left(  y^{u}\left(  0\right)
\right)  +\theta\Phi\left(  x^{u}\left(  T\right)  \right)  +\theta\xi
^{u}\left(  T\right)  \right\}  \right]  .
\end{array}
\]

Since $\Psi$ and $\Phi$ are convex, and applying Taylor's expansion, we get%
\[%
\begin{array}
[c]{l}%
J^{\theta}\left(  v\right)  -J^{\theta}\left(  u\right)  \geq\mathbb{E}\left[
\theta A_{T}\left(  \xi^{v}\left(  T\right)  -\xi^{u}\left(  T\right)
\right)  \right]  +\mathbb{E}\left[  \theta\Phi_{x}\left(  x^{u}\left(
T\right)  \right)  A_{T}\left(  x^{v}\left(  T\right)  -x^{u}\left(  T\right)
\right)  \right] \\
+\mathbb{E}\left[  \theta\Psi_{y}\left(  y^{u}\left(  0\right)  \right)
A_{T}\left(  y^{v}\left(  0\right)  -y^{u}\left(  0\right)  \right)  \right]
.
\end{array}
\]

According to $(\ref{adj1})$, we remark that $p_{1}\left(  T\right)  =\theta
A_{T}$, $p_{2}\left(  T\right)  =\theta\Phi_{x}\left(  x^{u}\left(  T\right)
\right)  A_{T}$ and $\ p_{3}\left(  0\right)  =\theta\Psi_{y}\left(
y^{u}\left(  0\right)  \right)  A_{T},$ then%
\begin{equation}%
\begin{array}
[c]{ll}%
J^{\theta}\left(  v\right)  -J^{\theta}\left(  u\right)  \geq & \mathbb{E}%
\left[  p_{1}\left(  T\right)  \left(  \xi_{T}^{v}-\xi_{T}^{u}\right)
\right]  +\mathbb{E}\left[  p_{2}\left(  T\right)  \left(  x^{v}\left(
T\right)  -x^{u}\left(  T\right)  \right)  \right] \\
& +\mathbb{E}\left[  p_{3}\left(  0\right)  \left(  y^{v}\left(  0\right)
-y^{u}\left(  0\right)  \right)  \right]  .
\end{array}
\label{cost risk suff}%
\end{equation}

We apply It\^{o}'s formula to $p_{1}\left(  t\right)  \left(  \xi^{v}\left(
t\right)  -\xi^{u}\left(  t\right)  \right)  $,%
\[%
\begin{array}
[c]{ll}%
d\left(  p_{1}\left(  t\right)  \left(  \xi^{v}\left(  t\right)  -\xi
^{u}\left(  t\right)  \right)  \right)  = & \left(  \xi^{v}\left(  t\right)
-\xi^{u}\left(  t\right)  \right)  dp_{1}\left(  t\right)  +p_{1}\left(
t\right)  d\left(  \xi^{v}\left(  t\right)  -\xi^{u}\left(  t\right)  \right)
\\
& +\left\langle \left(  \xi^{v}-\xi^{u}\right)  ,p_{1}\right\rangle _{t}dt+%
{\displaystyle\int\nolimits_{\Gamma}}
\left\langle \left(  \xi^{v}-\xi^{u}\right)  ,p_{1}\right\rangle _{t}m\left(
d\lambda\right)  dt
\end{array}
,
\]

then%
\[%
\begin{array}
[c]{ll}%
{\displaystyle\int\nolimits_{0}^{T}}
\left(  p_{1}\left(  t\right)  \left(  \xi^{v}\left(  t\right)  -\xi
^{u}\left(  t\right)  \right)  \right)  dt & =%
{\displaystyle\int\nolimits_{0}^{T}}
\left(  \xi^{v}\left(  t\right)  -\xi^{u}\left(  t\right)  \right)
dp_{1}\left(  t\right)  +%
{\displaystyle\int\nolimits_{0}^{T}}
p_{1}\left(  t\right)  d\left(  \xi^{v}\left(  t\right)  -\xi^{u}\left(
t\right)  \right) \\
& +%
{\displaystyle\int\nolimits_{0}^{T}}
\left\langle \left(  \xi^{v}-\xi^{u}\right)  ,p_{1}\right\rangle _{t}dt+%
{\displaystyle\int\nolimits_{0}^{T}}
{\displaystyle\int\nolimits_{\Gamma}}
\left\langle \left(  \xi^{v}-\xi^{u}\right)  ,p_{1}\right\rangle _{t}m\left(
d\lambda\right)  dt\\
& =%
{\displaystyle\int\nolimits_{0}^{T}}
\left(  f\left(  t,x^{v}\left(  t\right)  ,y^{v}\left(  t\right)
,z^{v}\left(  t\right)  ,r^{v}\left(  t,.\right)  ,v_{t}\right)  -\right. \\
& \left.  f\left(  t,x^{u}\left(  t\right)  ,y^{u}\left(  t\right)
,z^{u}\left(  t\right)  ,r^{u}\left(  t,.\right)  ,u_{t}\right)  \right)
q_{1}\left(  t\right)  dW_{t}\\
& +%
{\displaystyle\int\nolimits_{0}^{T}}
{\displaystyle\int_{\Gamma}}
\left(  f\left(  t,x^{v}\left(  t\right)  ,y^{v}\left(  t\right)
,z^{v}\left(  t\right)  ,r^{v}\left(  t,\lambda\right)  ,v_{t}\right)
-\right. \\
& \left.  f\left(  t,x^{u}\left(  t\right)  ,y^{u}\left(  t\right)
,z^{u}\left(  t\right)  ,r^{u}\left(  t,\lambda\right)  ,u_{t}\right)
\right)  \pi_{1}\left(  \lambda,t\right)  \widetilde{N}\left(  d\lambda
,dt\right) \\
& +%
{\displaystyle\int\nolimits_{0}^{T}}
\left(  f\left(  t,x^{v}\left(  t\right)  ,y^{v}\left(  t\right)
,z^{v}\left(  t\right)  ,r^{v}\left(  t,.\right)  ,v_{t}\right)  -\right. \\
& \left.  f\left(  t,x^{u}\left(  t\right)  ,y^{u}\left(  t\right)
,z^{u}\left(  t\right)  ,r^{u}\left(  t,.\right)  ,u_{t}\right)  \right)
p_{1}\left(  t\right)  dt
\end{array}
,
\]

We apply expectation, we get%
\begin{equation}%
\begin{array}
[c]{ll}%
\mathbb{E}\left[  p_{1}\left(  T\right)  \left(  \xi^{v}\left(  T\right)
-\xi^{u}\left(  T\right)  \right)  \right]  & =\mathbb{E}\left[
{\displaystyle\int\nolimits_{0}^{T}}
\left(  f\left(  t,x^{v}\left(  t\right)  ,y^{v}\left(  t\right)
,z^{v}\left(  t\right)  ,r^{v}\left(  t,.\right)  ,v_{t}\right)  -\right.
\right. \\
& \left.  \left.  f\left(  t,x^{u}\left(  t\right)  ,y^{u}\left(  t\right)
,z^{u}\left(  t\right)  ,r^{u}\left(  t,.\right)  ,u_{t}\right)  \right)
p_{1}\left(  t\right)  dt\right]
\end{array}
. \label{p1adj}%
\end{equation}

And we apply also It\^{o}'s formula to $p_{2}\left(  t\right)  \left(
x^{v}\left(  t\right)  -x^{u}\left(  t\right)  \right)  $
\[%
\begin{array}
[c]{ll}%
d\left(  p_{2}\left(  t\right)  \left(  x^{v}\left(  t\right)  -x^{u}\left(
t\right)  \right)  \right)  = & \left(  x^{v}\left(  t\right)  -x^{u}\left(
t\right)  \right)  dp_{2}\left(  t\right)  +p_{2}\left(  t\right)  d\left(
x^{v}\left(  t\right)  -x^{u}\left(  t\right)  \right) \\
& +\left\langle x^{v}-x^{u},p_{2}\right\rangle _{t}dt+%
{\displaystyle\int\nolimits_{\Gamma}}
\left\langle x^{v}-x^{u},p_{2}\right\rangle _{t}m\left(  d\lambda\right)  dt
\end{array}
,
\]

then%
\[%
\begin{array}
[c]{ll}%
{\displaystyle\int\nolimits_{0}^{T}}
d\left(  p_{2}\left(  t\right)  \left(  x^{v}\left(  t\right)  -x^{u}\left(
t\right)  \right)  \right)  = &
{\displaystyle\int\nolimits_{0}^{T}}
\left(  b\left(  t,x^{v}\left(  t\right)  ,y^{v}\left(  t\right)
,z^{v}\left(  t\right)  ,r^{v}\left(  t,.\right)  ,v_{t}\right)  -\right. \\
& \left.  b\left(  t,x^{u}\left(  t\right)  ,y^{u}\left(  t\right)
,z^{u}\left(  t\right)  ,r^{u}\left(  t,.\right)  ,u_{t}\right)  \right)
p_{2}\left(  t\right)  dt\\
& +%
{\displaystyle\int\nolimits_{0}^{T}}
\left(  \sigma\left(  t,x^{v}\left(  t\right)  ,y^{v}\left(  t\right)
,z^{v}\left(  t\right)  ,r^{v}\left(  t,.\right)  ,v_{t}\right)  -\right. \\
& \left.  \sigma\left(  t,x^{u}\left(  t\right)  ,y^{u}\left(  t\right)
,z^{u}\left(  t\right)  ,r^{u}\left(  t,.\right)  ,u_{t}\right)  \right)
p_{2}\left(  t\right)  dW_{t}\\
& +%
{\displaystyle\int\nolimits_{0}^{T}}
{\displaystyle\int\nolimits_{\Gamma}}
\left(  \gamma\left(  t,x^{v}\left(  t\right)  ,y^{v}\left(  t\right)
,z^{v}\left(  t\right)  ,r^{v}\left(  t,\lambda\right)  ,v_{t}\right)
-\right. \\
& \left.  \gamma\left(  t,x^{u}\left(  t\right)  ,y^{u}\left(  t\right)
,z^{u}\left(  t\right)  ,r^{u}\left(  t,\lambda\right)  ,u_{t}\right)
\right)  p_{2}\left(  t\right)  \widetilde{N}\left(  d\lambda,dt\right) \\
& +%
{\displaystyle\int\nolimits_{0}^{T}}
-\left(  f_{x}\left(  t\right)  p_{1}+b_{x}\left(  t\right)  p_{2}+\sigma
_{x}\left(  t\right)  q_{2}\right. \\
& \left.  +g_{x}\left(  t\right)  p_{3}+%
{\displaystyle\int_{\Gamma}}
\gamma_{x}\left(  t-,\lambda\right)  \pi_{2}\left(  \lambda,t\right)  m\left(
d\lambda\right)  \right)  \left(  x_{t}^{v}-x_{t}^{u}\right)  dt\\
& +%
{\displaystyle\int\nolimits_{0}^{T}}
q_{2}\left(  t\right)  \left(  x_{t}^{v}-x_{t}^{u}\right)  dB_{t}+%
{\displaystyle\int\nolimits_{0}^{T}}
{\displaystyle\int\nolimits_{\Gamma}}
\pi_{2}\left(  \lambda,t\right)  \left(  x_{t}^{v}-x_{t}^{u}\right)
\widetilde{N}\left(  d\lambda,dt\right) \\
& +%
{\displaystyle\int\nolimits_{0}^{T}}
{\displaystyle\int\nolimits_{\Gamma}}
\left(  \gamma\left(  t,x^{v}\left(  t\right)  ,y^{v}\left(  t\right)
,z^{v}\left(  t\right)  ,r^{v}\left(  t,\lambda\right)  ,v_{t}\right)
-\right. \\
& \left.  \gamma\left(  t,x^{u}\left(  t\right)  ,y^{u}\left(  t\right)
,z^{u}\left(  t\right)  ,r^{u}\left(  t,\lambda\right)  ,u_{t}\right)
\right)  \pi_{2}\left(  \lambda,t\right)  m\left(  d\lambda\right)  dt
\end{array}
\]

We apply expectation, we get%
\begin{equation}%
\begin{array}
[c]{l}%
\mathbb{E}\left[  p_{2}\left(  T\right)  \left(  x^{v}\left(  T\right)
-x^{u}\left(  T\right)  \right)  \right]  =\\
\mathbb{E}\left[
{\displaystyle\int\nolimits_{0}^{T}}
-\left(  f_{x}\left(  t\right)  p_{1}+b_{x}\left(  t\right)  p_{2}+\sigma
_{x}\left(  t\right)  q_{2}\right.  \right. \\
\left.  \left.  +g_{x}\left(  t\right)  p_{3}+%
{\displaystyle\int_{\Gamma}}
\gamma_{x}\left(  t-,\lambda\right)  \pi_{2}\left(  \lambda,t\right)  m\left(
d\lambda\right)  \right)  \left(  x_{t}^{v}-x_{t}^{u}\right)  dt\right] \\
+\mathbb{E}\left[
{\displaystyle\int\nolimits_{0}^{T}}
\left(  b\left(  t,x^{v}\left(  t\right)  ,y^{v}\left(  t\right)
,z^{v}\left(  t\right)  ,r^{v}\left(  t,.\right)  ,v_{t}\right)  -\right.
\right. \\
\left.  \left.  b\left(  t,x^{u}\left(  t\right)  ,y^{u}\left(  t\right)
,z^{u}\left(  t\right)  ,r^{u}\left(  t,.\right)  ,u_{t}\right)  \right)
p_{2}\left(  t\right)  dt\right] \\
+\mathbb{E}\left[
{\displaystyle\int\nolimits_{0}^{T}}
\left(  \sigma\left(  t,x^{v}\left(  t\right)  ,y^{v}\left(  t\right)
,z^{v}\left(  t\right)  ,r^{v}\left(  t,.\right)  ,v_{t}\right)  -\right.
\right. \\
\left.  \left.  \sigma\left(  t,x^{u}\left(  t\right)  ,y^{u}\left(  t\right)
,z^{u}\left(  t\right)  ,r^{u}\left(  t,.\right)  ,u_{t}\right)  \right)
q_{2}\left(  t\right)  dt\right] \\
+\mathbb{E}\left[
{\displaystyle\int\nolimits_{0}^{T}}
{\displaystyle\int_{\Gamma}}
\left(  \gamma\left(  t,x^{v}\left(  t-\right)  ,y^{v}\left(  t-\right)
,z^{v}\left(  t-\right)  ,r^{v}\left(  t-,\lambda\right)  ,v_{t-}\right)
-\right.  \right. \\
\left.  \left.  \gamma\left(  t,x^{u}\left(  t-\right)  ,y^{u}\left(
t-\right)  ,z^{u}\left(  t-\right)  ,r^{u}\left(  t-,\lambda\right)
,u_{t-}\right)  \right)  \pi\left(  t,\lambda\right)  m\left(  d\lambda
\right)  dt\right]  ,
\end{array}
\label{p2adj}%
\end{equation}

We apply also It\^{o}'s formula to $p_{3}\left(  t\right)  \left(
y^{v}\left(  t\right)  -y^{u}\left(  t\right)  \right)  $%
\[%
\begin{array}
[c]{ll}%
d\left(  p_{3}\left(  t\right)  \left(  y^{v}\left(  t\right)  -y^{u}\left(
t\right)  \right)  \right)  = & \left(  y^{v}\left(  t\right)  -y^{u}\left(
t\right)  \right)  dp_{3}\left(  t\right)  +p_{3}\left(  t\right)  d\left(
y^{v}\left(  t\right)  -y^{u}\left(  t\right)  \right) \\
& +\left\langle y^{v}-y^{u},p_{3}\right\rangle _{t}dt+%
{\displaystyle\int\nolimits_{\Gamma}}
\left\langle y^{v}-y^{u},p_{3}\right\rangle _{t}m\left(  d\lambda\right)  dt
\end{array}
,
\]

then%
\[%
\begin{array}
[c]{ll}%
{\displaystyle\int\nolimits_{0}^{T}}
d\left(  p_{3}\left(  t\right)  \left(  y^{v}\left(  t\right)  -y^{u}\left(
t\right)  \right)  \right)  = &
{\displaystyle\int\nolimits_{0}^{T}}
\left(  g\left(  t,x^{v}\left(  t\right)  ,y^{v}\left(  t\right)
,z^{v}\left(  t\right)  ,r^{v}\left(  t,.\right)  ,v_{t}\right)  -\right. \\
& \left.  g\left(  t,x^{u}\left(  t\right)  ,y^{u}\left(  t\right)
,z^{u}\left(  t\right)  ,r^{u}\left(  t,.\right)  ,u_{t}\right)  \right)
p_{3}\left(  t\right)  dt\\
&
{\displaystyle\int\nolimits_{0}^{T}}
\left(  z^{v}\left(  t\right)  -z^{u}\left(  t\right)  \right)  p_{3}\left(
t\right)  dW_{t}\\
& +%
{\displaystyle\int\nolimits_{0}^{T}}
{\displaystyle\int_{\Gamma}}
\left(  r^{v}\left(  t,\lambda\right)  -r^{u}\left(  t,\lambda\right)
\right)  p_{3}\left(  t\right)  \widetilde{N}\left(  dt,d\lambda\right) \\
& +%
{\displaystyle\int\nolimits_{0}^{T}}
-\left(  f_{y}\left(  t\right)  p_{1}+b_{y}\left(  t\right)  p_{2}+\sigma
_{y}\left(  t\right)  q_{2}\right. \\
& \left.  +g_{y}\left(  t\right)  p_{3}+%
{\displaystyle\int_{\Gamma}}
\gamma_{y}\left(  t-,\lambda\right)  \pi_{2}\left(  \lambda,t\right)  m\left(
d\lambda\right)  \right)  \left(  y_{t}^{v}-y_{t}^{u}\right)  dt\\
& +%
{\displaystyle\int\nolimits_{0}^{T}}
-\left(  f_{z}\left(  t\right)  p_{1}+b_{z}\left(  t\right)  p_{2}+\sigma
_{z}\left(  t\right)  q_{2}+g_{z}\left(  t\right)  p_{3}\right. \\
& \left.  +%
{\displaystyle\int_{\Gamma}}
\gamma_{z}\left(  t-,\lambda\right)  \pi_{2}\left(  \lambda,t\right)  m\left(
d\lambda\right)  \right)  \left(  y_{t}^{v}-y_{t}^{u}\right)  dW_{t}\\
& +%
{\displaystyle\int\nolimits_{0}^{T}}
{\displaystyle\int_{\Gamma}}
-\left(  f_{r}\left(  t\right)  p_{1}+b_{r}\left(  t\right)  p_{2}+\sigma
_{r}\left(  t\right)  q_{2}+g_{r}\left(  t\right)  p_{3}\right. \\
& \left.  +%
{\displaystyle\int_{\Gamma}}
\gamma_{r}\left(  t-,\lambda\right)  \pi_{2}\left(  \lambda,t\right)  m\left(
d\lambda\right)  \right)  \left(  y_{t}^{v}-y_{t}^{u}\right)  \widetilde
{N}\left(  d\lambda,dt\right)
\end{array}
\]

We apply expectation, We get%
\begin{equation}%
\begin{array}
[c]{l}%
\mathbb{E}\left[  p_{3}\left(  0\right)  \left(  y^{v}\left(  0\right)
-y^{u}\left(  0\right)  \right)  \right]  =\\
\mathbb{E}\left[
{\displaystyle\int\nolimits_{0}^{T}}
\left(  g\left(  t,x^{v}\left(  t\right)  ,y^{v}\left(  t\right)
,z^{v}\left(  t\right)  ,r^{v}\left(  t,.\right)  ,v_{t}\right)  -\right.
\right. \\
\left.  \left.  g\left(  t,x^{u}\left(  t\right)  ,y^{u}\left(  t\right)
,z^{u}\left(  t\right)  ,r^{u}\left(  t,.\right)  ,u_{t}\right)  \right)
p_{3}\left(  t\right)  dt\right] \\
-\mathbb{E}\left[
{\displaystyle\int\nolimits_{0}^{T}}
\left(  f_{y}\left(  t\right)  p_{1}\left(  t\right)  +b_{y}\left(  t\right)
p_{2}\left(  t\right)  +g_{y}\left(  t\right)  p_{3}\left(  t\right)
+\sigma_{y}\left(  t\right)  q_{2}\left(  t\right)  \right.  \right. \\
\left.  \left.  +%
{\displaystyle\int_{\Gamma}}
\gamma_{y}\left(  t-,\lambda\right)  \pi_{2}\left(  t,\lambda\right)  m\left(
d\lambda\right)  \right)  \left(  y^{v}\left(  t\right)  -y^{u}\left(
t\right)  \right)  dt\right] \\
-\mathbb{E}\left[
{\displaystyle\int\nolimits_{0}^{T}}
\left(  f_{z}\left(  t\right)  p_{1}\left(  t\right)  +b_{z}\left(  t\right)
p_{2}\left(  t\right)  +g_{z}\left(  t\right)  p_{3}\left(  t\right)
+\sigma_{z}\left(  t\right)  q_{2}\left(  t\right)  \right.  \right. \\
\left.  \left.  +%
{\displaystyle\int_{\Gamma}}
\gamma_{z}\left(  t-,\lambda\right)  \pi_{2}\left(  t,\lambda\right)  m\left(
d\lambda\right)  \right)  \left(  z^{v}\left(  t\right)  -z^{u}\left(
t\right)  \right)  dt\right] \\
-\mathbb{E}\left[
{\displaystyle\int\nolimits_{0}^{T}}
{\displaystyle\int_{\Gamma}}
\left(  f_{r}\left(  t\right)  p_{1}\left(  t\right)  +b_{r}\left(  t\right)
p_{2}\left(  t\right)  +g_{r}\left(  t\right)  p_{3}\left(  t\right)
+\sigma_{r}\left(  t\right)  q_{2}\left(  t\right)  \right.  \right. \\
\left.  \left.  +\gamma_{r}\left(  t-,\lambda\right)  \pi_{2}\left(
t,\lambda\right)  \right)  \left(  r_{t}^{v}\left(  \lambda\right)  -r_{t}%
^{u}\left(  \lambda\right)  \right)  m\left(  d\lambda\right)  dt\right]  .
\end{array}
\label{p3adj}%
\end{equation}

By replacing $\left(  \ref{p1adj}\right)  $ ,$\left(  \ref{p2adj}\right)  $
and $\left(  \ref{p3adj}\right)  $ into $\left(  \ref{cost risk suff}\right)
,$ we have%
\[%
\begin{array}
[c]{l}%
J^{\theta}\left(  v\right)  -J^{\theta}\left(  u\right) \\
\geq\mathbb{E}\left[  \int_{0}^{T}\left(  \widetilde{H}^{\theta}\left(
t,x^{v}\left(  t\right)  ,y^{v}\left(  t\right)  ,z^{v}\left(  t\right)
,r^{v}\left(  t,.\right)  ,v_{t},p\left(  t\right)  ,q\left(  t\right)
,\pi\left(  t,.\right)  \right)  -\right.  \right. \\
\left.  \left.  \widetilde{H}^{\theta}\left(  t,x^{u}\left(  t\right)
,y^{u}\left(  t\right)  ,z^{u}\left(  t\right)  ,r^{u}\left(  t,.\right)
,u_{t},p^{u}\left(  t\right)  ,q^{u}\left(  t\right)  ,\pi\left(  t,.\right)
\right)  \right)  dt\right] \\
-\mathbb{E}\left[
{\displaystyle\int\nolimits_{0}^{T}}
\widetilde{H}_{x}^{\theta}\left(  t,x^{u}\left(  t\right)  ,y^{u}\left(
t\right)  ,z^{u}\left(  t\right)  ,r^{u}\left(  t,.\right)  ,u_{t}%
,p^{u}\left(  t\right)  ,q^{u}\left(  t\right)  ,\pi\left(  t,.\right)
\right)  \left(  x^{v}\left(  t\right)  -x^{u}\left(  t\right)  \right)
dt\right] \\
-\mathbb{E}\left[
{\displaystyle\int\nolimits_{0}^{T}}
\widetilde{H}_{y}^{\theta}\left(  t,x^{u}\left(  t\right)  ,y^{u}\left(
t\right)  ,z^{u}\left(  t\right)  ,r^{u}\left(  t,.\right)  ,u_{t}%
,p^{u}\left(  t\right)  ,q^{u}\left(  t\right)  ,\pi\left(  t,.\right)
\right)  \left(  y^{v}\left(  t\right)  -y^{u}\left(  t\right)  \right)
dt\right] \\
-E\left[
{\displaystyle\int\nolimits_{0}^{T}}
\widetilde{H}_{z}^{\theta}\left(  t,x^{u}\left(  t\right)  ,y^{u}\left(
t\right)  ,z^{u}\left(  t\right)  ,r^{u}\left(  t,.\right)  ,u_{t}%
,p^{u}\left(  t\right)  ,q^{u}\left(  t\right)  ,\pi\left(  t,.\right)
\right)  \left(  z^{v}\left(  t\right)  -z^{u}\left(  t\right)  \right)
dt\right] \\
-E\left[
{\displaystyle\int\nolimits_{0}^{T}}
\nabla\widetilde{H}_{r}^{\theta}\left(  t,x^{u}\left(  t\right)  ,y^{u}\left(
t\right)  ,z^{u}\left(  t\right)  ,r^{u}\left(  t,.\right)  ,u_{t}%
,p^{u}\left(  t\right)  ,q^{u}\left(  t\right)  ,\pi\left(  t,.\right)
\right)  \right. \\
\left.  \left(  r^{v}\left(  t,.\right)  -r^{u}\left(  t,.\right)  \right)
dt\right]  .
\end{array}
\]

Since the Hamiltonian $H$ is concave with respect to $\left(
x,y,z,r,v\right)  $, we have%
\[%
\begin{array}
[c]{l}%
\mathbb{E}\left[
{\displaystyle\int\nolimits_{0}^{T}}
\widetilde{H}_{v}^{\theta}\left(  t,x^{u}\left(  t\right)  ,y^{u}\left(
t\right)  ,z^{u}\left(  t\right)  ,r^{u}\left(  t,.\right)  ,u_{t}%
,p^{u}\left(  t\right)  ,q^{u}\left(  t\right)  ,\pi\left(  t,.\right)
\right)  \left(  v_{t}-u_{t}\right)  dt\right] \\
\leq\mathbb{E}\left[  \int_{0}^{T}\left(  \widetilde{H}^{\theta}\left(
t,x^{v}\left(  t\right)  ,y^{v}\left(  t\right)  ,z^{v}\left(  t\right)
,r^{v}\left(  t,.\right)  ,v_{t},p^{u}\left(  t\right)  ,q^{u}\left(
t\right)  ,\pi^{u}\left(  t,.\right)  \right)  -\right.  \right. \\
\left.  \left.  \widetilde{H}^{\theta}\left(  t,x^{v}\left(  t\right)
,y^{v}\left(  t\right)  ,z^{v}\left(  t\right)  ,r^{v}\left(  t,.\right)
,u_{t},p^{u}\left(  t\right)  ,q^{u}\left(  t\right)  ,\pi^{u}\left(
t,.\right)  \right)  \right)  dt\right] \\
+\mathbb{E}\left[
{\displaystyle\int\nolimits_{0}^{T}}
\widetilde{H}_{x}^{\theta}\left(  t,x^{v}\left(  t\right)  ,y^{v}\left(
t\right)  ,z^{v}\left(  t\right)  ,r^{v}\left(  t,.\right)  ,v_{t}%
,p^{u}\left(  t\right)  ,q^{u}\left(  t\right)  ,\pi^{u}\left(  t,.\right)
\right)  \left(  x^{v}\left(  t\right)  -x^{u}\left(  t\right)  \right)
dt\right] \\
+\mathbb{E}\left[
{\displaystyle\int\nolimits_{0}^{T}}
\widetilde{H}_{y}^{\theta}\left(  t,x^{v}\left(  t\right)  ,y^{v}\left(
t\right)  ,z^{v}\left(  t\right)  ,r^{v}\left(  t,.\right)  ,v_{t}%
,p^{u}\left(  t\right)  ,q^{u}\left(  t\right)  ,\pi^{u}\left(  t,.\right)
\right)  \left(  y^{v}\left(  t\right)  -y^{u}\left(  t\right)  \right)
dt\right] \\
+E\left[
{\displaystyle\int\nolimits_{0}^{T}}
\widetilde{H}_{z}^{\theta}\left(  t,x^{v}\left(  t\right)  ,y^{v}\left(
t\right)  ,z^{v}\left(  t\right)  ,r^{v}\left(  t,.\right)  ,v_{t}%
,p^{u}\left(  t\right)  ,q^{u}\left(  t\right)  ,\pi^{u}\left(  t,.\right)
\right)  \left(  z^{v}\left(  t\right)  -z^{u}\left(  t\right)  \right)
dt\right] \\
+E\left[
{\displaystyle\int\nolimits_{0}^{T}}
\nabla\widetilde{H}_{r}^{\theta}\left(  t,x^{v}\left(  t\right)  ,y^{v}\left(
t\right)  ,z^{v}\left(  t\right)  ,r^{v}\left(  t,.\right)  ,v_{t}%
,p^{u}\left(  t\right)  ,q^{u}\left(  t\right)  ,\pi^{u}\left(  t,.\right)
\right)  \right. \\
\left.  \left(  r^{v}\left(  t,.\right)  -r^{u}\left(  t,.\right)  \right)
dt\right]  .
\end{array}
\]

Then
\[%
\begin{array}
[c]{l}%
J^{\theta}\left(  v\right)  -J^{\theta}\left(  u\right) \\
\geq\mathbb{E}\left[
{\displaystyle\int\nolimits_{0}^{T}}
\widetilde{H}_{v}^{\theta}\left(  t,x^{v}\left(  t\right)  ,y^{v}\left(
t\right)  ,z^{v}\left(  t\right)  ,r^{v}\left(  t,.\right)  ,u_{t}%
,p^{u}\left(  t\right)  ,q^{u}\left(  t\right)  ,\pi^{u}\left(  t,.\right)
\right)  \left(  v_{t}-u_{t}\right)  dt\right]  .
\end{array}
\]

In virtue of the necessary condition of optimality $\left(  \ref{SMP}\right)
$ the last inequality implies that $J^{\theta}\left(  v\right)  -J^{\theta
}\left(  u\right)  \geq0.$ Then, the theorem is improved.
\end{proof}

\section{Example: Mean-Variance (Cash-flow):}

Now we return to the problem of optimal portfolio stated in the motivating
example, and apply the risk sensitive necessary optimality condition (Theorem
$\ref{Risk-sen NOC FBJ}$).

Our state dynamics is%
\begin{equation}
\left\{
\begin{array}
[c]{l}%
dx\left(  t\right)  =\left(  \rho v\left(  t\right)  -cx\left(  t\right)
\right)  dt+\sigma v\left(  t\right)  dW\left(  t\right)  +%
{\displaystyle\int_{\Gamma}}
v\left(  t\right)  \left(  1+r\left(  t,\lambda\right)  \right)  \widetilde
{N}\left(  d\lambda,dt\right)  ,\\
x\left(  0\right)  =m_{0}=d,
\end{array}
\right.  \label{forwardexemple}%
\end{equation}

and
\begin{equation}
\left\{
\begin{array}
[c]{l}%
dy\left(  t\right)  =\left(  \rho v\left(  t\right)  -cx\left(  t\right)
+\lambda y\left(  t\right)  \right)  dt+z\left(  t\right)  dW\left(  t\right)
+%
{\displaystyle\int_{\Gamma}}
r\left(  t,\lambda\right)  \widetilde{N}\left(  d\lambda,dt\right)  ,\\
y\left(  T\right)  =0=a.
\end{array}
\right.  \label{backwardexemple}%
\end{equation}

The cost functional is%
\[
J^{\theta}\left(  v\left(  .\right)  \right)  =\exp\left\{  \theta
\widetilde{J}^{\theta}\left(  v\left(  .\right)  \right)  \right\}  ,
\]
where $\widetilde{J}$ is the neutral cost functional given by the following
expected with an exponential form see section 1.2.3
\begin{equation}
\widetilde{J}^{\theta}\left(  v\left(  .\right)  \right)  =\frac{\theta}%
{2}\mathbb{E}\left(  \Psi_{T}-a\right)
{{}^2}%
+\mathbb{E}\left(  \Psi_{T}\right)  +o\left(  \theta%
{{}^2}%
\right)  , \label{costexemple}%
\end{equation}

Where $\Psi_{T}=\left(  x_{T}+y_{0}\right)  $. The investor wants to minimize
$\left(  \ref{costexemple}\right)  $ subject to $\left(  \ref{forwardexemple}%
\right)  $ and $\left(  \ref{backwardexemple}\right)  $ by taking $v\left(
.\right)  $ over $\mathcal{U}$, the mean--variance portfolio selection problem
is to find $u(t)$ which minimize
\[
\mathbb{V}ar(\Psi_{T})=\mathbb{E}\left(  x_{T}+y_{0}-a\right)
{{}^2}%
\]

The Hamiltonian function $\left(  \ref{H risk-sensitive1}\right)  $ gets the
form%
\[%
\begin{array}
[c]{ll}%
H^{\theta}\left(  t\right)  & :=H^{\theta}\left(  t,x\left(  t\right)
,y\left(  t\right)  ,z\left(  t\right)  ,r\left(  t,\lambda\right)
,\widetilde{p}_{2}\left(  t\right)  ,\widetilde{q}_{2}\left(  t\right)
,\widetilde{p}_{3}\left(  t\right)  ,\widetilde{\pi}_{2}\left(  t,.\right)
,l\left(  t\right)  ,L\left(  t,.\right)  ,v_{t}\right) \\
& =f\left(  t\right)  +b\left(  t\right)  \widetilde{p}_{2}\left(  t\right)
+\sigma\left(  t\right)  \widetilde{q}_{2}\left(  t\right)  +\left\{  g\left(
t\right)  -\theta l\left(  t\right)  z\left(  t\right)  \right\}
\widetilde{p}_{3}\left(  t\right) \\
& +%
{\displaystyle\int_{\Gamma}}
\left\{  \gamma\left(  t-,\lambda\right)  \widetilde{\pi}_{2}\left(
t,\lambda\right)  -\left(  g\left(  t\right)  -\theta L\left(  t,\lambda
\right)  r\left(  t,\lambda\right)  \right)  \widetilde{p}_{3}\left(
t\right)  \right\}  m\left(  d\lambda\right) \\
& =\left(  \rho v\left(  t\right)  -cx\left(  t\right)  \right)  \widetilde
{p}_{2}\left(  t\right)  +\sigma v\left(  t\right)  \widetilde{q}_{2}\left(
t\right)  +\left\{  \left(  \rho v\left(  t\right)  -cx\left(  t\right)
+\lambda y\left(  t\right)  \right)  -\theta l\left(  t\right)  z\left(
t\right)  \right\}  \widetilde{p}_{3}\left(  t\right) \\
& -%
{\displaystyle\int_{\Gamma}}
\left\{  v\left(  t\right)  \left(  1+r\left(  t,\lambda\right)  \right)
\widetilde{\pi}_{2}\left(  t,\lambda\right)  -\left(  \left(  \rho v\left(
t\right)  -cx\left(  t\right)  +\lambda y\left(  t\right)  \right)  -\theta
L\left(  t,\lambda\right)  r\left(  t,\lambda\right)  \right)  \right. \\
& \left.  \widetilde{p}_{3}\left(  t\right)  \right\}  m\left(  d\lambda
\right)  .
\end{array}
\]

Then, to get the optimal control, the derivative of the above Hamiltonian with
respect to the control process gives us%

\begin{equation}%
\begin{array}
[c]{ll}%
H_{u}^{\theta}\left(  t\right)  & :=H_{u}^{\theta}\left(  t,x\left(  t\right)
,y\left(  t\right)  ,z\left(  t\right)  ,r\left(  t,.\right)  ,\widetilde
{p}_{2}\left(  t\right)  ,\widetilde{q}_{2}\left(  t\right)  ,\widetilde
{p}_{3}\left(  t\right)  ,\widetilde{\pi}_{2}\left(  t,.\right)  ,l\left(
t\right)  ,L\left(  t,.\right)  ,v_{t}\right) \\
& =\rho\widetilde{p}_{2}\left(  t\right)  +\sigma\widetilde{q}_{2}\left(
t\right)  +%
{\displaystyle\int_{\Gamma}}
\left(  1+r\left(  t,\lambda\right)  \right)  \widetilde{\pi}_{2}\left(
t,\lambda\right)  m\left(  d\lambda\right) \\
& =0
\end{array}
\label{the Hamiltonien derive}%
\end{equation}

Let $\left(  x^{u}\left(  t\right)  ,u\left(  t\right)  \right)  $ be an
optimal pair, the adjoint equation $\left(  \ref{P2tilde}\right)  ,$ is given
by%
\[
\left\{
\begin{array}
[c]{ll}%
d\widetilde{p}_{2}^{u}\left(  t\right)  & =c\left(  \widetilde{p}_{2}%
^{u}\left(  t\right)  +c\widetilde{p}_{3}^{u}\left(  t\right)  \right)
dt+\left(  \widetilde{q}_{2}^{u}\left(  t\right)  -\theta l_{2}\left(
t\right)  \widetilde{p}_{2}^{u}\left(  t\right)  \right)  dW^{\theta}\left(
t\right) \\
& +%
{\displaystyle\int_{\Gamma}}
\left(  \widetilde{\pi}_{2}\left(  t,\lambda\right)  -\theta L_{2}\left(
t,\lambda\right)  \widetilde{p}_{2}^{u}\left(  t\right)  \right)
\widetilde{N}^{\theta}\left(  d\lambda,dt\right)  ,\\
\widetilde{p}_{2}^{u}\left(  T\right)  & =1+\theta\left(  x_{T}-y_{0}%
-a\right)  .
\end{array}
\right.
\]

By using of $\ref{Girsanov trans},$ we get%

\begin{equation}
\left\{
\begin{array}
[c]{ll}%
d\widetilde{p}_{2}^{u}\left(  t\right)  & =\left\{  \left(  c+\theta^{2}%
l^{2}\left(  t\right)  +\int_{\Gamma}\theta^{2}L^{2}\left(  t,\lambda\right)
m\left(  d\lambda\right)  \right)  \widetilde{p}_{2}^{u}\left(  t\right)
+c\widetilde{p}_{3}^{u}\left(  t\right)  -\theta l\left(  t\right)
\widetilde{q}_{2}^{u}\right. \\
& \left.  -\int_{\Gamma}\theta L\left(  t,\lambda\right)  \widetilde{\pi}%
_{2}^{u}\left(  t,\lambda\right)  m\left(  d\lambda\right)  \right\}
dt+\left(  \widetilde{q}_{2}^{u}\left(  t\right)  -\theta l_{2}\left(
t\right)  \widetilde{p}_{2}^{u}\left(  t\right)  \right)  dW\left(  t\right)
\\
& +%
{\displaystyle\int_{\Gamma}}
\left(  \widetilde{\pi}_{2}\left(  t,\lambda\right)  -\theta L_{2}\left(
t,\lambda\right)  \widetilde{p}_{2}^{u}\left(  t\right)  \right)
\widetilde{N}\left(  d\lambda,dt\right)  ,\\
\widetilde{p}_{2}^{u}\left(  T\right)  & =1+\theta\left(  x_{T}-y_{0}%
-a\right)  .
\end{array}
\right.  \label{p2exemple}%
\end{equation}

Therefore, an optimal solution $\left(  x_{t}^{u},\widetilde{p}_{2}^{u}\left(
t\right)  ,u_{t}\right)  $ can be obtained by solving the system FBSDE with
jumps diffusion $\left(  \ref{forwardexemple}\right)  $ and $\left(
\ref{p2exemple}\right)  ,$ unfortunately, in such system is difficult to find
the explicit solution, to this end we use the similar technique as in
\cite{Yong Zhoo} see also \cite{Yong}, we conjecture the solution to $\left(
\ref{forwardexemple}\right)  $ and $\left(  \ref{p2exemple}\right)  $ is
related by%
\begin{equation}
\widetilde{p}_{2}^{u}\left(  t\right)  =A\left(  t\right)  x^{u}\left(
t\right)  +B\left(  t\right)  , \label{conjectionexemple}%
\end{equation}

for some deterministic differentiable functions $A\left(  t\right)  $ and
$B\left(  t\right)  .$ Applying It\^{o}'s formula to $\left(
\ref{conjectionexemple}\right)  ,$ we get%
\begin{equation}
\left\{
\begin{array}
[c]{ll}%
d\widetilde{p}_{2}^{u}\left(  t\right)  = & \left[  \overset{\bullet}%
{A}\left(  t\right)  x^{u}\left(  t\right)  +A(t)(\rho u_{t}-cx^{u}\left(
t\right)  )+\overset{\bullet}{B}\left(  t\right)  \right]  dt+A\left(
t\right)  \sigma u_{t}dW\left(  t\right) \\
& +%
{\displaystyle\int_{\Gamma}}
A(t)\left(  1+r\left(  t,\lambda\right)  \right)  u_{t}\widetilde{N}\left(
d\lambda,dt\right)  ,\\
d\widetilde{p}_{2}^{u}\left(  T\right)  = & A\left(  T\right)  x^{u}\left(
T\right)  +B\left(  T\right)  .
\end{array}
\right.  \label{p2 Itoexemple}%
\end{equation}

On the other hand, by substituting $\left(  \ref{conjectionexemple}\right)  $
into $\left(  \ref{p2exemple}\right)  ,$ and denote by%
\begin{equation}%
\begin{array}
[c]{l}%
\widetilde{q}_{3}^{u}\left(  t\right)  =\left(  \theta l_{2}\left(  t\right)
\widetilde{p}_{2}^{u}\left(  t\right)  -\widetilde{q}_{2}^{u}\left(  t\right)
\right) \\
\widetilde{\pi}_{3}\left(  t,.\right)  =\widetilde{\pi}_{2}\left(  t,.\right)
-\theta L_{2}\left(  t,.\right)  \widetilde{p}_{2}^{u}\left(  t\right)  .
\end{array}
\label{q3ident}%
\end{equation}
By using the Girsanov's transformation in $\left(  \ref{p2exemple}\right)  ,$
as in section 2 lemma $\left(  \ref{transform}\right)  $, we obtain%
\begin{equation}
\left\{
\begin{array}
[c]{ll}%
d\widetilde{p}_{2}^{u}\left(  t\right)  = & \left\{  \left(  c+\theta^{2}%
l^{2}\left(  t\right)  +\int_{\Gamma}\theta^{2}L^{2}\left(  t,\lambda\right)
m\left(  d\lambda\right)  \right)  \widetilde{p}_{2}^{u}\left(  t\right)
+c\widetilde{p}_{3}^{u}\left(  t\right)  -\theta l\left(  t\right)
\widetilde{q}_{3}^{u}\left(  t\right)  \right\}  dt\\
& +\widetilde{q}_{3}^{u}\left(  t\right)  dW\left(  t\right)  +%
{\displaystyle\int_{\Gamma}}
\widetilde{\pi}_{3}\left(  t,\lambda\right)  \widetilde{N}\left(
d\lambda,dt\right)  ,\\
\widetilde{p}_{2}^{u}\left(  T\right)  = & 1+\theta\left(  x_{T}%
-y_{0}-a\right)  .
\end{array}
\right.  . \label{p2 conjectionexemple}%
\end{equation}

By equating the coefficients and the final conditions of $\left(
\ref{p2 conjectionexemple}\right)  $ with $\left(  \ref{p2 Itoexemple}\right)
,$ we have%
\begin{equation}%
\begin{array}
[c]{ll}%
\widetilde{\pi}_{3}\left(  t,\lambda\right)  = & A(t)\left(  1+r\left(
t,.\right)  \right)  u_{t},\\
\widetilde{q}_{3}^{u}\left(  t\right)  = & \sigma u_{t}A\left(  t\right)  ,\\
A\left(  T\right)  = & \theta,\\
B\left(  T\right)  = & 1-\theta\left(  y_{0}+a\right)  .
\end{array}
\label{idenexemple}%
\end{equation}

By identifying $\left(  \ref{q3ident}\right)  $ with $\left(
\ref{idenexemple}\right)  ,$ we can rewrite%
\[
\widetilde{q}_{2}^{u}\left(  t\right)  =\theta l_{2}\left(  t\right)  \left(
A\left(  t\right)  x^{u}\left(  t\right)  +B\left(  t\right)  \right)  +\sigma
u_{t}A\left(  t\right)  ,
\]

and
\[
\widetilde{\pi}_{2}^{u}\left(  t,.\right)  =\theta L_{2}\left(  t,.\right)
\left(  A\left(  t\right)  x^{u}\left(  t\right)  +B\left(  t\right)  \right)
+r\left(  t,.\right)  u_{t}A\left(  t\right)  ,
\]
then replacing the both equations $\left(  \ref{idenexemple}\right)  $, and
the last equations of $\widetilde{q}_{2}^{u}\left(  t\right)  $ and
$\widetilde{\pi}_{2}^{u}\left(  t,.\right)  $ into $\left(
\ref{the Hamiltonien derive}\right)  $, we have,%
\[%
\begin{array}
[c]{l}%
\rho\left(  A(t)x^{u}\left(  t\right)  +B(t)\right)  +\rho\widetilde{p}%
_{3}\left(  t\right)  +\sigma\theta l\left(  t\right)  \left(  A(t)x^{u}%
\left(  t\right)  +B(t)\right)  +\sigma^{2}A(t)u_{t}\\
+\int_{\Gamma}\left\{  \left(  1+r\left(  t,\lambda\right)  \right)  \theta
L\left(  t,\lambda\right)  \left(  A(t)x^{u}\left(  t\right)  +B(t)\right)
+\left(  1+r\left(  t,\lambda\right)  \right)  ^{2}A(t)u_{t}-\rho\widetilde
{p}_{3}\left(  t\right)  \right\}  m\left(  d\lambda\right) \\
=0,
\end{array}
\]

then we get,%

\begin{equation}
u\left(  t,x_{t}\right)  =-\frac{\left(  \rho+\sigma\theta l\left(  t\right)
+\int_{\Gamma}\left(  1+r\left(  t,\lambda\right)  \right)  \theta L\left(
t,\lambda\right)  m\left(  d\lambda\right)  \right)  \left(  A(t)x^{u}\left(
t\right)  +B(t)\right)  +\rho\widetilde{p}_{3}\left(  t\right)  }{A(t)G\left(
t\right)  }, \label{u01}%
\end{equation}

where $G\left(  t\right)  =\sigma%
{{}^2}%
-%
{\displaystyle\int_{\Gamma}}
\left(  1+r\left(  t,\lambda\right)  \right)
{{}^2}%
m\left(  d\lambda\right)  .$

In the other side, we have from $\left(  \ref{p2 Itoexemple}\right)  $ and
$\left(  \ref{p2 conjectionexemple}\right)  .$ Then%
\begin{equation}
u_{t}=-\frac{\overset{\bullet}{A}\left(  t\right)  x^{u}\left(  t\right)
-2cA(t)x^{u}\left(  t\right)  -cB(t)+\overset{\bullet}{B}\left(  t\right)
-c\widetilde{p}_{3}^{u}\left(  t\right)  }{A\left(  t\right)  \left(
\rho+\sigma\theta l\left(  t\right)  +\int_{\Gamma}\left(  1+r\left(
t,\lambda\right)  \right)  \theta L\left(  t,\lambda\right)  m\left(
d\lambda\right)  \right)  }. \label{u02}%
\end{equation}

From $\left(  \ref{u01}\right)  $ and $\left(  \ref{u02}\right)  $, we have%
\begin{equation}
\left\{
\begin{array}
[c]{l}%
\overset{\bullet}{A}\left(  t\right)  =\left\{  2c+\dfrac{\left(  \rho
+\sigma\theta l\left(  t\right)  +\int_{\Gamma}\left(  1+r\left(
t,\lambda\right)  \right)  \theta L\left(  t,\lambda\right)  m\left(
d\lambda\right)  \right)
{{}^2}%
}{G\left(  t\right)  }\right\}  A(t),\\
A(T)=\theta.
\end{array}
\right.  \label{ODE A}%
\end{equation}
and
\begin{equation}
\left\{
\begin{array}
[c]{l}%
\overset{\bullet}{B}\left(  t\right)  =\left\{  c+\dfrac{\left(  \rho
+\sigma\theta l\left(  t\right)  +\int_{\Gamma}\left(  1+r\left(
t,\lambda\right)  \right)  \theta L\left(  t,\lambda\right)  m\left(
d\lambda\right)  \right)
{{}^2}%
}{G\left(  t\right)  }\right\}  B(t)+c\widetilde{p}_{3}^{u}\left(  t\right)
,\\
B(T)=1-\theta\left(  y_{0}+a\right)  .
\end{array}
\right.  \label{ODE B}%
\end{equation}

Then the explicit solutions of $\left(  \ref{ODE A}\right)  ,$ and $\left(
\ref{ODE B}\right)  $ have the form%
\begin{equation}
\left\{
\begin{array}
[c]{l}%
A(t)=\theta\exp%
{\displaystyle\int_{t}^{T}}
\left\{  2c+\dfrac{\left(  \rho+\sigma\theta l\left(  s\right)  +\int_{\Gamma
}\left(  1+r\left(  s,\lambda\right)  \right)  \theta L\left(  s,\lambda
\right)  m\left(  d\lambda\right)  \right)
{{}^2}%
}{G\left(  s\right)  }\right\}  ds,\\
B(t)=\left(  1-\theta\left(  y_{0}+a\right)  \right)  \exp%
{\displaystyle\int_{t}^{T}}
\left[  \left\{  c+\dfrac{\left(  \rho+\sigma\theta l\left(  s\right)
+\int_{\Gamma}\left(  1+r\left(  s,\lambda\right)  \right)  \theta L\left(
s,\lambda\right)  m\left(  d\lambda\right)  \right)
{{}^2}%
}{G\left(  s\right)  }\right\}  \right. \\
\left.  B(s)+c\widetilde{p}_{3}^{u}\left(  s\right)  \right]  ds.
\end{array}
\right.  \label{the explicit sol A et B}%
\end{equation}

\begin{remark}
It's very important to remark that the solution of the function $B\left(
t\right)  $ in the form $\left(  \ref{the explicit sol A et B}\right)  $ is
depend to the solution of $\widetilde{p}_{3}\left(  t\right)  .$ If we put
$\widetilde{p}_{3}\left(  t\right)  =\psi\left(  t\right)  y\left(  t\right)
+\varphi\left(  t\right)  ,$ for smooth deterministic functions $\psi,$ and
$\varphi,$ by using the similar technique as an optimal solution in the last
paragraph, to the triplet $\left(  y^{u}\left(  t\right)  ,\widetilde{p}%
_{3}^{u}\left(  t\right)  ,u\left(  t\right)  \right)  $. Then the solutions
of $\psi,$ and $\varphi$ yield respectively the equations%
\begin{equation}
\left\{
\begin{array}
[c]{l}%
\overset{\bullet}{\psi}\left(  t\right)  =\rho%
{{}^2}%
\psi%
{{}^2}%
\left(  t\right)  -\left(  2\lambda\sigma%
{{}^2}%
A(t)-\theta%
{{}^2}%
l%
{{}^2}%
(t)\right)  \psi\left(  t\right)  ,\\
\overset{\bullet}{\varphi}\left(  t\right)  =(\rho\psi\left(  t\right)
+\theta%
{{}^2}%
l%
{{}^2}%
(t)-\lambda)\varphi\left(  t\right)  +K(t),\\
\psi\left(  0\right)  =\theta,\text{ and }\varphi\left(  0\right)
=1-\theta\left(  y_{0}-a\right)  .
\end{array}
\right.  \label{linear of p3  ch4}%
\end{equation}

\end{remark}

The main result in this section, can be given in the form of maximum principle
of mean variance problem with risk sensitive performance.

\begin{theorem}
We assume that the pair $\left(  A\left(  t\right)  ,B(t)\right)  $ has unique
solution given by $\left(  \ref{the explicit sol A et B}\right)  $, the pair
$\left(  \varphi\left(  t\right)  ,\psi(t)\right)  $ has also the explicit
solution of the system $\left(  \ref{linear of p3 ch4}\right)  $. Then the
optimal control of the problem $\left(  \ref{forwardexemple}\right)  $,
$\left(  \ref{backwardexemple}\right)  $ and $\left(  \ref{costexemple}%
\right)  $ has the state feedback form%
\[%
\begin{array}
[c]{l}%
u\left(  t,x_{t},y_{t},r_{t}\left(  .\right)  \right) \\
=-\dfrac{\left(  \rho+\sigma\theta l\left(  t\right)  +\int_{\Gamma}\left(
1+r\left(  t,\lambda\right)  \right)  \theta L\left(  t,\lambda\right)
m\left(  d\lambda\right)  \right)  \left(  A(t)x^{u}\left(  t\right)
+B(t)\right)  +\rho\left(  \psi\left(  t\right)  y^{u}\left(  t\right)
+\varphi\left(  t\right)  \right)  }{A(t)G\left(  t\right)  }.
\end{array}
\]

\end{theorem}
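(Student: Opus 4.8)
The plan is to assemble the feedback representation from the first-order condition supplied by Theorem \ref{Risk-sen NOC FBJ} together with the two linear ans\"atze for the adjoint processes. Since $\mathcal{U}$ is convex and the optimizer is interior, the variational inequality $\partial H^{\theta}(t)\le 0$ of Theorem \ref{Risk-sen NOC FBJ} sharpens to the stationarity condition $H_{u}^{\theta}(t)=0$; computing this derivative for the mean--variance cash-flow data $b,\sigma,\gamma,g$ of (\ref{forwardexemple})--(\ref{backwardexemple}) yields (\ref{the Hamiltonien derive}), namely $\rho\widetilde{p}_{2}+\sigma\widetilde{q}_{2}+\int_{\Gamma}(1+r)\widetilde{\pi}_{2}\,m(d\lambda)=0$. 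The task is then to express $\widetilde{p}_{2},\widetilde{q}_{2},\widetilde{\pi}_{2}$ and $\widetilde{p}_{3}$ in closed form so that this equation can be solved for $u$.

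First I would treat the forward adjoint $\widetilde{p}_{2}^{u}$ governed by (\ref{p2exemple}). Following the Riccati-type guess (\ref{conjectionexemple}), $\widetilde{p}_{2}^{u}(t)=A(t)x^{u}(t)+B(t)$, I apply It\^o's formula to obtain (\ref{p2 Itoexemple}) and, after the Girsanov change of (\ref{Girsanov trans}), compare its drift and martingale integrands with the intrinsic dynamics (\ref{p2 conjectionexemple}). Matching the $dW$ and $\widetilde{N}$ integrands produces the identifications (\ref{idenexemple}) for $\widetilde{q}_{3}^{u},\widetilde{\pi}_{3}$ and the terminal data $A(T)=\theta$, $B(T)=1-\theta(y_{0}+a)$, while matching the drift terms gives the decoupled linear equations (\ref{ODE A}) and (\ref{ODE B}). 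Under the hypothesis that these admit the unique solution (\ref{the explicit sol A et B}), and using (\ref{q3ident}) to recover $\widetilde{q}_{2}^{u}$ and $\widetilde{\pi}_{2}^{u}$ in terms of $A,B,l,L$, substitution into (\ref{the Hamiltonien derive}) delivers the intermediate feedback (\ref{u01}), in which the only remaining unknown is $\widetilde{p}_{3}(t)$.

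To close the representation I would impose the second ansatz $\widetilde{p}_{3}(t)=\psi(t)y^{u}(t)+\varphi(t)$ of the Remark, apply It\^o's formula, and match against the backward adjoint equation (\ref{P3tilde}) for $(\widetilde{p}_{3},\widetilde{q}_{3},\widetilde{\pi}_{3})$ evaluated along the optimal triple $(y^{u},\widetilde{p}_{3}^{u},u)$; this yields the Riccati equation for $\psi$ and the linear equation for $\varphi$ recorded in the Remark, with $\psi(0)=\theta$ and $\varphi(0)=1-\theta(y_{0}-a)$. Inserting $\widetilde{p}_{3}=\psi y^{u}+\varphi$ into (\ref{u01}) and collecting terms then gives exactly the stated state-feedback law, which completes the proof.

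I expect the genuine difficulty to be the consistency of this scheme rather than any single calculation: one must verify that the two ans\"atze are simultaneously compatible with the fully coupled FBSDE, so that the $x$- and $y$-dependent coefficients align and no spurious terms survive the coefficient matching, and that the denominator $A(t)G(t)$ never vanishes so the feedback is well defined. Here $G(t)=\sigma^{2}-\int_{\Gamma}(1+r(t,\lambda))^{2}m(d\lambda)$ must be kept away from zero, which is the natural nondegeneracy requirement on the jump-diffusion volatility. Since the existence and uniqueness of the ODE solutions are assumed in the statement, the core obstacle is precisely this verification-style matching of coefficients across the coupled system.
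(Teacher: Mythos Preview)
Your proposal is correct and mirrors the paper's approach exactly: the paper does not give a separate proof after the theorem but rather states the theorem as the summary of the derivation carried out in Section~5, and you have reconstructed precisely that derivation---the stationarity condition \eqref{the Hamiltonien derive} from Theorem~\ref{Risk-sen NOC FBJ}, the ansatz \eqref{conjectionexemple} for $\widetilde{p}_{2}$ with the coefficient matching that yields \eqref{idenexemple}, \eqref{ODE A}, \eqref{ODE B} and then \eqref{u01}, followed by the second ansatz $\widetilde{p}_{3}=\psi y+\varphi$ from the Remark to close the feedback law. Your remarks about the needed nondegeneracy of $A(t)G(t)$ and the consistency of the two ans\"atze go slightly beyond what the paper makes explicit, but they identify the right verification points.
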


\section{Conclusion and Remarks:}

This paper contains two main results. The first one, Theorem $\ref{NOC}$,
establishes the necessary optimality conditions for the system of fully
coupled FBSDE with risk sensitive performance, using an almost similar scheme
as in Chala \cite{Chala 02, BTT}. The second main result, Theorem $\ref{SOC}$,
suggests sufficient optimality conditions of fully coupled FBSDE given in form
of risk sensitive performance., we note here that our paper is the second
extension of result of Chala \cite{Chala 03} The proof is based on the
convexity conditions of the Hamiltonian function, the initial and terminal
terms of the performance function. It should be noted that the risk sensitive
control problems studied by Lim and Zhou in \cite{Lim-Zhou} are different from
ours. Our results can be compared with maximum principle obtained by Shi and
Wu \cite{Shi-Wu2}, but we have to be able to discuss the generale case -if we
add the jumps diffusion term to our system-. This result it will be discussed
in our next paper. On the other hand, in the case where the system is governed
by mean field type we may take the existing paper established by Djechiche et
al \cite{BTT}. We have generalized this last result into the fully coupled
stochastic differential equation which is motivated by an optimal portfolio
choice problem in financial market specially the model of control cash flow of
a firm or project for example we can setting the model of pricing and managing
an insurance contract, this counterpart without mean field term as in
\cite{BTT}, A problem to be thoroughly addressed in our future paper, where
the system is governed by fully coupled stochastic differential equation of
mean field type, and will be compared with \cite{Ma}. Remarkably, the maximum
principle of risk-neutral obtained by Wu \cite{Wu}, and Yong \cite{Yong} is
quite similar to our theorem $\ref{theoriskneutral}$, but their adjoint
equation and maximum conditions heavily depend on the risk sensitive parameter.


\begin{thebibliography}{99}                                                                                               %


\bibitem {Arm}Armerin, F.(2004),\textquotedblleft Aspects of cash flow
valuation,\textquotedblright\ Phd. Thesis, KTH.

\bibitem {Arkin}V. Arkin and M. Saksonov.(1979): Necessary optimality
conditions for stochastic differential equations, Soviet Math. Dokl., 20, pp. 1-5.

\bibitem {Bensoussan}A. Bensoussan.(1982): Non linear filtering and stochastic
control. Proc. Cortona 1981, Lect. notes in Math, 972, Springer Verlag.

\bibitem {Bismut1}J.M. Bismut.(1973): Conjugate convex functions in optimal
stochastic control, J. Math. Anal. Appl.44, pp. 384-404.

\bibitem {Bismut2}J.M. Bismut.(1976): Linear quadratic optimal control with
random coefficients, SIAM J. Control Optim., 14, pp. 419-444.

\bibitem {Bismut3}J.M. Bismut.(1978): An introductory approach to duality in
stochastic control, SIAM Rev, 20, 62-78.

\bibitem {Boel}R. K. Boel.: Optimal control of jump processes, Electronics
Research Lab. Memo M448, University of California, Berkeley, CA, July 1974.

\bibitem {Boel varaiya}R. K. Boel and P. Varaiya.(1977): Optimal control of
jump processes, SIAM J. Control Optim., 15, pp. 92-119.

\bibitem {Cadenillas1}A. Cadenillas and I.\ Karatzas.(1995): The stochastic
maximum principle for linear convex optimal control with random coefficients,
SIAM\ J. Cont. Optim., Vol. 33, No 2, pp.590-624.

\bibitem {Cadenillas2}A. Cadenillas.(2002): A stochastic maximum principle for
systems with jumps, with application to finance, Systems and Control Letters,
47, pp 433-444.

\bibitem {Chala 02}Chala, A.(2017), \textquotedblleft Pontryagin's
Risk-Sensitive Stochastic Maximum Principle for Backward Stochastic
Differential Equations,\textquotedblright\ Bull \ Braz. Math. Soc, New
Series.Vol.48, No.3, 1678-7714.

\bibitem {Chala 03}Chala, A.(2017), "Sufficient Optimality Condition for a
Risk-Sensitive Control Problem for Backward Stochastic Differential Equations
and an Application", Journal of Num. Math. @. Stoc. Vol.9, No 1, 48-60.

\bibitem {chala hefayed khallout}Chala, A,.Hefayed, D,. Khallout, R. "The use
of Girsanov's Theorem to Describe Risk-Sensitive Problem and Application To
Optimal control", Stochastique Differential Equations, Tony G. Deangelo, ISBN:
978-1-53613-809-2, Pages 117-154, Nova(2018).

\bibitem {BTT}Djehiche, B., Tembine, H.,Tempone, R.(2015), \textquotedblleft A
stochastic maximum principle for risk-sensitive mean-field type
control,\textquotedblright\ IEEE Trans. Autom \ Control, Vol.60, No.10, 2640-2649.

\bibitem {El-Karoui-Hamadene}El-Karoui, N., S.Hamad\`{e}ne(2003),
\textquotedblleft BSDEs and risk-sensitive control, zero-sum and nonzero-sum
game problems of stochastic functional differential
equations,\textquotedblright Stoch. Process. Appl., Vol.107, No.10, 145-169.

\bibitem {FraOksendalSul}Framstad, NC, \O ksendal, B , Sulem, A
(2004),"Sufficient stochastic maximum principle for the optimal control of
jump diffusions and applications to finance," Journal of Optimization Theory
and Applications, Vol 121, No 1, 77--98.

\bibitem {Hausmann1}U.G. Haussmann.(1976): General necessary conditions for
optimal control of stochastic systems, Math. Programming Studies 6, pp 30-48.

\bibitem {Hausmann2}U.G. Haussmann.(1986):\ A Stochastic maximum principle for
optimal control of diffusions,\ Pitman Research Notes in Math, Series 151.

\bibitem {Kushner}H.J. Kushner.(1973): Necessary conditions for continuous
parameter stochastic optimization problems, SIAM J. Control Optim, Vol. 10, pp 550-565.

\bibitem {Lim-Zhou}Lim, A. E. B., Zhou, X. Y.(2005), \textquotedblleft A new
risk-sensitive maximum principle,\textquotedblright\ IEEE.Trans. Autom.
Control, Vol.50, No.7, 958-966.

\bibitem {Ma}Ma, H., and Liu, B (2017)., Optimal Control Problem for
Risk-Sensitive Mean-Field Stochastic Delay Differential Equation with Partial
Information. Asian Journal of Control, 19: 2097--2115.

\bibitem {Oksendal Sul2}\O ksendal, B , Sulem, A , "Applied Stochastic Control
of Jump Diffusions" (2005) Springer.

\bibitem {Peng}S. Peng.$\left(  1990\right)  $: A general stochastic maximum
principle for optimal control problems\textit{.} SIAM J. Control and Optim,
28, N${{}^{\circ}}$ 4, pp 966-979.

\bibitem {PS}Peng, S., Wu. Z.(1999), \textquotedblleft Fully coupled
forward-backward stochastic differential equations and applications to optimal
control,\textquotedblright\ SIAM J. Control \ Optim, Vol.37, No.3, 825-843.

\bibitem {Rishel}R. Rishel.(1975): A minimum principle for controlled jump
processes, Lecture Notes in Economics and Mathematical Systems, Vol. 107,
Springer-Verlag, Berlin, Heidelberg, New York, pp. 493-508.

\bibitem {Situ}R. Situ.(1991): A maximum principle for optimal controls of
stochastic systems with random jumps, in Proc. National Conference on Control
Theory and Its Applications, Qingdao, Shandong, People's Republic of China, October.

\bibitem {SW2}Shi, J. T., Wu. Z.(2007),\textquotedblleft Maximum principle for
fully coupled forward-backward stochastic control system with random jumps,
July 26-31, paper presented at control conference chinise 2007. CCC, Hunan,
China pp375-380, IEEE explore.

\bibitem {Shi-Wu1}Shi, J. T., Wu. Z.(2011), \textquotedblleft A risk-Sensitive
stochastic maximum principle for optimal control of jump diffusions and its
applications,\textquotedblright\ Acta Mathematica Scientia,Vol.30, No.2, 419-433.

\bibitem {Shi-Wu2}Shi, J. T., Wu. Z.(2012), \textquotedblleft Maximum
Principle for Risk-Sensitive Stochastic Optimal Control Problem and
Applications to Finance,\textquotedblright\ Stochastic Analysis and
Applications,Vol.31,No.6, 997-1018.

\bibitem {Tembin-Zhu-Basar}Tembine, H., Zhu. Q., Basar,T.(2014),
\textquotedblleft Risk-sensitive mean-field games,\textquotedblright\ IEEE
Trans. Autom. Control, Vol.59, No.4, 835-850.

\bibitem {TangLi}S. J. Tang and X. J. Li.: Necessary conditions for optimal
control of stochastic systems with random jumps, SIAM J. Control Opti

\bibitem {Yong}Yong, J.(2010), \textquotedblleft Optimality variational
principle for controlled forward-backward stochastic differential equations
with mixed initial-terminal conditions,\textquotedblright\ Siam J. control and
optim, Vol.48, No.6, 4119-4156.

\bibitem {Yong Zhoo}Yong, J., Zhou. X.(1999), "Stochastic controls:
Hamiltonian system and HJB equations". Springer-Verlag, New York.

\bibitem {Wu}Wu.Z., (1999). Forward backward stochastic differential equations
with brownian motion and poisson process. Acta Mathematicae Applicatae Sinica.
Vol 15, No04, 433-443.
\end{thebibliography}
\end{document}